\documentclass[reqno]{amsart}

\title[Characterising path-, ray- and branch spaces, and end spaces of infinite graphs]{Characterising path-, ray- and branch spaces of order trees, and end spaces of infinite graphs}
\author{Max Pitz}
\address{Universit\"at Hamburg, Department of Mathematics, Bundesstrasse 55 (Geomatikum), 20146 Hamburg, Germany}
\email{max.pitz@uni-hamburg.de}
\usepackage{amsmath,amssymb,amsthm}  
\usepackage{mathtools}
\usepackage{tikz, tikz-cd}
\usetikzlibrary{shapes.geometric}
\usetikzlibrary{shapes,decorations.pathmorphing,backgrounds,positioning,fit,petri, hobby}
\usepackage{comment}
\usepackage{enumerate}
\usepackage{enumitem}
\usepackage{mathrsfs}

\usepackage{xcolor} 	
\usepackage[unicode]{hyperref}
\hypersetup{
	colorlinks,
    linkcolor={red!60!black},
    citecolor={green!60!black},
    urlcolor={blue!60!black},
}
\usepackage[abbrev, msc-links]{amsrefs} 

\usepackage[utf8]{inputenc}
\usepackage[T1]{fontenc}
\usepackage{lmodern}
\usepackage[babel]{microtype}
\usepackage[english]{babel}

\usepackage{geometry}
\geometry{left=26.5mm,right=26.5mm, top=32mm, bottom=32mm, marginparwidth=20mm}
\linespread{1.2}

\lineskiplimit=-4pt 

\usepackage{enumitem}

\let\polishlcross=\l
\def\l{\ifmmode\ell\else\polishlcross\fi}

\let\emptyset=\varnothing


\let\theta=\vartheta
\let\rho=x
\let\phi=\varphi

\def\NN{\mathbb N}
 
\def\QQ{\mathbb Q}

\def\cA{{\mathcal A}}

\def\cL{{\mathcal L}}

\def\cI{{\mathcal I}}
\def\cB{{\mathcal B}}
\def\cC{{\mathscr C}}

\def\cP{{\mathcal P}}

\def\cG{{\mathcal G}}
\def\cR{{\mathcal R}}
\def\cS{{\mathcal S}}

\def\cU{{\mathcal U}}



\defˆ{^}
\newcommand{\script}{\mathcal}
\newcommand{\parentheses}[1]{{\left( {#1} \right)}}

\newcommand{\p}{\parentheses}

\newcommand{\closure}[1]{\overline{#1}}
\newcommand{\Set}[1]{{\left\lbrace {#1} \right\rbrace}}

\newcommand{\pair}[1]{\langle {#1} \rangle}
\def\set#1:#2{\Set{{#1} \colon {#2}}}

\newcommand{\dc}[1]{\lceil #1\rceil}
\newcommand{\uc}[1]{\lfloor #1\rfloor}

\theoremstyle{plain}
\newtheorem{thm}{Theorem}[section]

\newtheorem{prop}[thm]{Proposition}

\newtheorem{cor}[thm]{Corollary}
\newtheorem{lemma}[thm]{Lemma}

\theoremstyle{definition}
\newtheorem{exmp}[thm]{Example}

\newtheorem{defn}[thm]{Definition}

\begin{document}

\begin{abstract}
We investigate path-, ray- and branch spaces of trees, certain topological spaces naturally associated with order theoretic trees, and provide topological characterisations for these spaces in terms of the existence of certain kinds of (sub-)bases. 
Our results yield a solution to a problem due to Diestel from 1992, by establishing the following equivalences for any Hausdorff space $X$:
	\begin{enumerate}
		\item $X$ is homeomorphic to the end space of a graph,
		\item $X$ is homeomorphic to the ray space of a special order tree, 
		\item $X$ admits a nested clopen subbase that is
 noetherian, hereditarily complete and $\sigma$-disjoint.
 	\end{enumerate}
\end{abstract}

\vspace*{-36pt}
\maketitle

\section{Introduction}

End spaces arise as the boundary of an infinite graph in a standard sense generalising the theory of the Freudenthal boundary developed by Freudenthal and Hopf in the 1940's for infinite groups \cite{freudenthal1942neuaufbau,freudenthal1944enden,hopf1943enden}. Although end spaces have been studied for decades by the graph-theoretic community in numerous papers by Halin, Jung, Polat, Diestel and others \cite{carmesin2019all, diestel2006end, diestel2003graph , Halin_Enden64 , halin1978simplicial, jung1968wurzelbaume, FirstSecondCountable, kurkofka2021approximating, pitz2020unified, polat1996ends, polat1996ends2, sprussel2008end}, we still do not have a characterisation of end spaces in topological terms, a problem formally posed by Diestel in 1992 \cite[Problem~5.1]{diestel1992end}. 
The purpose of the present paper is to provide such a topological characterisation, phrased in terms of the existence of a certain subbase. As a byproduct of our analysis, we also establish a number of further, new  topological properties shared by all end spaces, for example that every end space contains a dense, completely metrizable subspace.

Characterisations of topological spaces in terms of the existence of certain types of (sub-)bases belong to most useful results in general topology. The most familiar ones are \emph{Urysohn's metrization theorem} (regular spaces with a countable (sub-)base are metrizable), the \emph{Nagata-Smirnov-Bing metrization theorem} (regular spaces with a $\sigma$-discrete (sub-)base are metrizable), \emph{de Groot's ultrametrization theorem} (Hausdorff spaces with a $\sigma$-discrete clopen (sub-)base are ultrametrizable), and \emph{Alexander's subbase lemma} (spaces where every cover consisting of elements of some subbase has a finite subcover are compact). We refer the reader to the survey \cite{nyikos1975some} by Nyikos  for additional `surprising base properties in topology', and the paper \cite{van1979subbase} and the references therein for further examples of subbase characterisations.

All our results on end spaces of infinite graphs are obtained indirectly by investigating certain topologies naturally associated with order-theoretic trees as introduced by Kurepa \cite{kurepa1956ecart} and Todorcevic \cite{todorvcevic1988lindelof},  \cite[Theorem~9.14]{todorvcevic1984trees}. 
By \emph{tree} we mean a partially ordered set $T$ with a unique minimal element such that the set of predecessors of any element of $T$ is well-ordered. A tree is \emph{special} if it is a union of countably many antichains. Our trees may have distinct limits nodes with the same set of strict predecessors. 
A \emph{path} in $T$ is a down-closed chain. A path without maximal element is a \emph{ray}, and an inclusion-wise maximal path is a \emph{branch}. By identifying paths with their characteristic function, the set $\cP(T)$ of all paths of $T$ is a closed, hence compact subspace of $2^{T}$. The space $\cP(T)$ is the \emph{path space} associated with $T$. The \emph{ray space} $\cR(T)$ and the \emph{branch space} $\cB(T)$ are its  subspaces consisting of all rays respectively branches of $T$. 
 Identifying nodes $t$ of $T$ with the path with maximum $t$ gives a dense embedding $T \subseteq \cP(T)$, and so we may view $\cP(T)$ as a  compactification of $T$. In this perspective, the boundary $\cP(T) \setminus T$ of this compactification is the ray space $\cR(T)$ of $T$. 
 
 Path- and ray spaces of special trees are of particular importance. Gruenhage showed in \cite{Gruenhage1986} that a path space $\cP(T)$ is Eberlein compact if and only if $T$ is special. The significance of ray spaces of special trees 
 lies in the following result, see also Section~\ref{sec_repthm} below for a more detailed discussion:
 
 \begin{thm}[{Kurkofka \& Pitz \cite{kurkofkapitz_rep}}]
 \label{thm_kp_rep}
	The following are equivalent for a topological space $X$:
	\begin{enumerate}
		\item $X$ is homeomorphic to the end space of a graph,
		\item $X$ is homeomorphic to the ray space of a special tree.
 	\end{enumerate}
\end{thm}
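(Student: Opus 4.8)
The plan is to prove the two implications $(1)\Rightarrow(2)$ and $(2)\Rightarrow(1)$ separately, in each case by an explicit construction that turns the given object into the required one, followed by a verification that the induced map on boundaries is a homeomorphism; the two standard descriptions of the relevant subbasic clopen sets — the sets $\{\sigma\in\cR(T):t\in\sigma\}$ in a ray space, and the sets of ends living in a fixed component of $G$ minus a finite vertex set in an end space — are what the homeomorphism will have to match up.

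\emph{From $(1)$ to $(2)$.} Suppose $X\cong\Omega(G)$ for a connected graph $G$, where $\Omega(G)$ denotes its end space. The first step is to reduce to the situation in which $G$ is already tree-like, exploiting the well-developed structure theory of ends: an end-faithful spanning tree for countable $G$, and, in general, a tree-decomposition of $G$ of finite adhesion that displays the ends. This yields a graph-theoretic tree $G'$ — possibly carrying a set of ``end-capturing'' vertices — together with a subtree-respecting correspondence with $\Omega(G)$. Rooting $G'$ turns it into an order tree all of whose chains are \emph{finite}; its levels $L_0,L_1,\dots$ are then antichains and there are only countably many of them, so this order tree is \emph{special}, and its rays are precisely the graph-rays of $G'$, accounting for the ends of $G$ that follow a ray of $G'$. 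To absorb the end-capturing vertices $t$ one grafts, above each such $t$, a further special order tree $S_t$ whose ray space is homeomorphic to the space of ends captured at $t$, obtained by an inductive application of the theorem to the part of $G$ hanging at $t$ (which carries strictly ``fewer'' ends); placing the antichains witnessing specialness of $S_t$ on new levels strictly above the level of $t$ keeps every level of the amalgamated order tree $T$ an antichain, so $T$ is still special, and one checks $\cR(T)\cong\Omega(G)$. The main obstacle here is the handling of the end-capturing (``dominated'') vertices: getting the local topology around those ends exactly right, and setting up the induction so that it terminates, which forces a careful choice of the displaying decomposition together with a direct treatment of base cases such as one-ended graphs.

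\emph{From $(2)$ to $(1)$.} Suppose $X\cong\cR(T)$ for a special tree $T$, and fix a map $f\colon T\to\omega$ that is injective on every chain of $T$ (equivalently, a writing of $T$ as a countable union of antichains). The plan is to build a graph $G$ in which the limit behaviour of chains of $T$ is simulated by the limit behaviour of $\omega$-rays of $G$. Replace each node $t\in T$ by a finite connected gadget $H_t$ whose size is governed by $f(t)$, and join the gadgets along the tree order of $T$; the essential point is the treatment of a limit node $t$ (one with no immediate predecessor), where one must fix, once and for all, a cofinal $\omega$-sequence $s_0<s_1<\cdots$ in the set of strict predecessors of $t$ — possible because specialness makes every chain of $T$ countable, hence of cofinality at most $\omega$ — and thread a graph-ray of $G$ through $H_{s_0},H_{s_1},\dots$ down into $H_t$. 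Since $f$ restricts to an injection of every chain into $\omega$, and the gadgets are finite, these threaded rays are honest $\omega$-rays even when the chain of $T$ they follow has a large countable order type, and the end space of $G$ ends up being exactly $\cR(T)$ rather than something larger. One then verifies that every ray $\sigma$ of $T$ induces an end $\eta(\sigma)$ of $G$; that $\sigma\mapsto\eta(\sigma)$ is a bijection onto $\Omega(G)$ — injective because distinct rays of $T$ are separated by a single gadget, whose deletion disconnects the corresponding ends, and surjective because an end of $G$, read as a direction, selects a branch of the gadget-graph and hence a ray of $T$; and that this bijection is a homeomorphism. I expect the bookkeeping at limit nodes to be the main obstacle: the cofinal $\omega$-sequences chosen at the various limit nodes must be mutually compatible along $T$, so that a ray $\sigma$ of arbitrary countable order type can be followed by threading together the sequences chosen at its cofinally many limit points, and so that no spurious end of $G$ is created and surjectivity genuinely holds.

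Chaining the two implications, together with the observation that being an end space of a graph and being a ray space of a special tree are both invariant under homeomorphism, yields the equivalence of $(1)$ and $(2)$. Of the two directions I expect the genuine technical weight to sit in $(2)\Rightarrow(1)$: order trees are far more flexible objects than graphs, so the real work is to imitate the order-theoretic limit structure of a special tree faithfully by means of honest finite separators and $\omega$-rays inside a graph.
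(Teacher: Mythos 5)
There is a mismatch of scope, and within your sketch a genuine gap. First, the scope: the paper does not prove Theorem~\ref{thm_kp_rep} from scratch --- it is imported from \cite{kurkofkapitz_rep}, and the only proof obligation discharged in this paper is the one in Section~\ref{sec_repthm}: verifying that the topology on $\cR(T)$ used here (the subspace topology inherited from $2^T$) coincides with the sequential topology $\tau_{seq}$ in which the original representation theorem was stated. Your proposal does not address this translation at all, and instead attempts to reprove the full external theorem; so even if your sketch were complete, it would not be a proof of the statement \emph{as it is used in this paper} without the topology-comparison argument of Section~\ref{sec_repthm}.

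Second, the direction $(1)\Rightarrow(2)$ of your sketch has a concrete gap. You propose to reduce to a tree-decomposition of finite adhesion that ``displays the ends'' and then to absorb the remaining ``end-capturing vertices'' by grafting, above each such vertex $t$, a special tree obtained by applying the theorem inductively to the part of $G$ hanging at $t$, which you claim carries ``strictly fewer ends''. Two problems: (a) by Carmesin's theorem \cite{carmesin2019all}, only the \emph{undominated} ends can be displayed by a finite-adhesion tree-decomposition, and the part hanging at a dominating vertex can contain essentially the entire end space again, so your induction has no well-founded termination measure; (b) the actual mechanism by which dominated ends are represented is not a grafted subtree above each dominating vertex but a single ray of $T$ with many (possibly uncountably many) \emph{tops} --- this is exactly why the paper stresses, just before Section~2.2, that rays must be allowed multiple tops for Theorem~\ref{thm_kp_rep} to hold, cf.\ Examples~\ref{example_onepointcpt} and~\ref{example_onepointcptres}. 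Your construction as described would replace the one-point-compactification local structure around a dominated end by something with the wrong topology. The direction $(2)\Rightarrow(1)$ is closer in spirit to the actual argument (a $T$-graph construction in which each node receives a finite neighbourhood cofinal in its set of predecessors, finiteness being available precisely because $T$ is special), but it too is only a sketch: the compatibility of the cofinal sequences you worry about, and the surjectivity onto $\Omega(G)$, are exactly the points where the real work lies, and they are not carried out.
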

Hence, to understand end spaces of infinite graphs, it suffices to understand topological properties of ray spaces of (special) trees, and this is the programme carried out in this paper.

It turns out that ray spaces of trees are far easier to analyse than end spaces of graphs, the main reason being that ray spaces have a canonical (sub-)base. 
As a case in point, recall that in the framework of end spaces, establishing the existence of a \emph{nested} family of clopen sets witnessing that end spaces are Hausdorff is a deep result by Carmesin \cite[Corollary~5.17]{carmesin2019all}. See \cite{diestel2018tree, gollin2021representations} for further applications of nested sets of separations of finite and infinite graphs. In the framework of ray spaces, the definition $\cR(T) \subseteq 2^T$ makes it clear that the topology of $\cR(T)$ is generated by the subbase obtained by declaring all sets of the form 
$[t] = \set{x \in \cR(T)}:{t \in x}$
 to be clopen. But now it is obvious that this collection $\cS = \set{[t]}:{t \in T}$ is indeed \emph{nested} in the sense that whenever two elements from $\cS$ intersect, one of them is included in the other.

As one set of results, we use this canonical subbase to establish the following topological properties of ray spaces, which were previously hard or unknown for end spaces:\footnote{We refer the reader to Sections~\ref{subsec_baseproperties} and \ref{sec_baseII} for any undefined topological terms.}
 
 \begin{enumerate}[label=(\roman*)]
 		\item \emph{Every ray space, and hence every end space, is ultraparacompact} (Proposition~\ref{prop_ultraparacompactness}). 
 			That end spaces of infinite graphs are ultraparacompact has been proven only recently, using Jung's theory of normal trees \cite{kurkofka2021approximating}. 
 		\item \emph{Every ray space, and hence every end space, satisfies all Amsterdam completeness properties, and hence is Baire} (Proposition~\ref{prop_basecpt2}). 
 			This result may be viewed as a considerable strengthening of the Diestel-Kühn \emph{direction theorem} for ends in infinite graphs \cite[Theorem~2.2]{diestel2003graph}.		
	\item \emph{Every ray space, and hence every end space, is monotonically normal} (Proposition~\ref{prop_monnormal}).
	\item \emph{Every ray space of a special tree, and hence every end space, contains a dense, completely metrizable subspace} (Proposition~\ref{prop_densemetr}).
	\item \emph{Every compact ray space of a special tree, and hence every compact end space, is Eberlein compact} (Corollary~\ref{cor_endspaceEberlein}). 
 \end{enumerate}

These properties should be seen in the light of the result that the metrizable end spaces are precisely the completely ultrametrizable spaces (see Theorem~\ref{thm_charmetr} below), and so it is natural to inquire to what degree the latter properties lift to general, non-metrizable end spaces.  

Our second set of results gives topological characterisations of (subspaces of) path-, branch- and ray spaces of (special) trees. 
 Since the class of path spaces corresponds precisely to the class of compact ray spaces (Theorem~\ref{thm_charpath}), it suffices to focus on ray- and branch spaces. Here, our main result reads as follows:

\begin{thm}
\label{thm_charray}
	The following are equivalent for a Hausdorff space $X$:
	\begin{enumerate}
		\item $X$ is homeomorphic to the ray space of a \textnormal{[}special\textnormal{]} tree, 
		\item $X$ admits a nested clopen subbase that is
 noetherian, hereditarily complete \textnormal{[}and $\sigma$-disjoint\textnormal{]}.
 	\end{enumerate}
\end{thm}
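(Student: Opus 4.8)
The plan is to prove the two directions of Theorem~\ref{thm_charray} separately, with the forward direction (ray spaces have such a subbase) being largely a matter of unwinding definitions, and the backward direction (a space with such a subbase \emph{is} a ray space) being the substantive part.

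For $(1)\Rightarrow(2)$: given a [special] tree $T$, I would take the canonical subbase $\cS = \set{[t]}:{t \in T}$ already singled out in the introduction. These sets are clopen in $\cR(T) \subseteq 2^T$ by definition, and $\cS$ is nested because $[s]\cap[t]\neq\emptyset$ forces $s,t$ to lie on a common ray, hence to be comparable in $T$, whence $[s]\subseteq[t]$ or $[t]\subseteq[s]$. That $\cS$ is noetherian should reflect the well-foundedness of descending chains $[t_0]\supsetneq[t_1]\supsetneq\cdots$, which corresponds to a descending chain in $T$; the relevant noetherian condition (no infinite strictly descending sequence, or the ascending chain condition in the appropriate formulation) then follows from the well-ordering of predecessor sets. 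Hereditary completeness I would read off from the fact that a nested family of nonempty basic sets $[t_i]$ with the finite intersection property picks out a chain in $T$ whose down-closure is a path, and the corresponding intersection in $\cR(T)$ contains a ray extending it — this is essentially the compactness of $\cP(T)$ together with the observation that limits of rays land in $\cR(T)$. Finally, if $T$ is special, say $T = \bigcup_{n} A_n$ with each $A_n$ an antichain, then $\cS = \bigcup_n \set{[t]}:{t \in A_n}$ and each $\set{[t]}:{t \in A_n}$ is a disjoint family (incomparable nodes give disjoint $[t]$'s by nestedness), so $\cS$ is $\sigma$-disjoint.

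For $(2)\Rightarrow(1)$: this is where the real work lies, and the natural strategy is to \emph{reconstruct} a tree $T$ from the subbase $\cS$. Order the nonempty members of $\cS$ by reverse inclusion; nestedness makes this a forest-like order, and after adjoining a root (or passing to a single component, treating the general case by a disjoint union argument) one obtains a partial order $T$ in which the predecessors of a node $[s]$ are the members of $\cS$ properly containing it. The noetherian hypothesis is exactly what guarantees that these predecessor sets are well-ordered, so $T$ is a tree in the paper's sense; if $\cS$ is in addition $\sigma$-disjoint, the antichain decomposition of $\cS$ transfers to show $T$ is special. The crux is then to produce a homeomorphism $X \to \cR(T)$. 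Each point $x \in X$ determines the set $T_x = \set{[s] \in \cS}:{x \in [s]}$, which nestedness makes a chain and which — one checks using that $\cS$ is a subbase for a Hausdorff topology — is a down-closed chain with no maximum, i.e.\ a ray in $T$; this gives a map $\phi\colon X \to \cR(T)$, $x \mapsto T_x$. Injectivity is Hausdorffness (distinct points are separated by a subbasic, hence by some $[s]$); continuity is immediate since $\phi^{-1}([s]) = [s]$ is open; and the subbasic sets $[s]$ being clopen, together with a suitable completeness/compactness bookkeeping, should give that $\phi$ is open onto its image. The heart of the matter — and the step I expect to be the main obstacle — is showing $\phi$ is \emph{onto} $\cR(T)$: given a ray $r$ in $T$, i.e.\ an increasing-by-reverse-inclusion chain of subbasic sets $([s_i])_{i \in I}$, one must find $x \in X$ lying in all of them, and then argue that $T_x$ is exactly $r$ rather than a proper extension. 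This is precisely where \emph{hereditary completeness} of the subbase is designed to be invoked: it should guarantee that the relevant filter generated by $\{[s_i]\}$ (restricted to the appropriate subspace) has nonempty intersection, while the noetherian condition prevents $T_x$ from properly extending $r$. I would isolate this as a lemma ("every ray of $T$ is realised by a point of $X$") and prove it by a transfinite construction running along the well-order of $r$, at each stage using hereditary completeness on the clopen subspace carved out so far to descend to the next level, and using noetherianness to see the construction terminates at a genuine ray. Packaging these pieces — $\phi$ is a continuous bijection onto $\cR(T)$ that is open — yields the homeomorphism, and the bracketed "special" versions match up because the $\sigma$-disjoint hypothesis and the specialness of $T$ were shown to correspond.
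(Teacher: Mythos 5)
Your overall strategy for $(2)\Rightarrow(1)$ --- order $\cS$ by reverse inclusion to obtain a tree, send $x$ to $T_x=\set{S \in \cS}:{x \in S}$, and use hereditary completeness for surjectivity --- is the same skeleton the paper uses, but there is a genuine gap at the central step: your claim that $T_x$ is a \emph{ray}, i.e.\ a path without maximum, is false in general, and Hausdorffness does not rescue it. If there is a $\subseteq$-least member of $\cS$ containing $x$, then $T_x$ is a path \emph{with} a maximum and hence not an element of $\cR(T)$. This happens already for $X$ a single point with $\cS=\Set{X}$ (where $\cR(T_\cS)=\emptyset$), and more interestingly for the one-point compactification of a discrete space with the obvious subbase $\Set{X}\cup\set{\Set{\alpha}}:{\alpha<\kappa}$, which satisfies all the hypotheses of (2) while every $T_x$ has a maximum, so that $\cR(T_\cS)=\emptyset\neq X$. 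The paper repairs exactly this by a lexicographic-sum modification of $T_\cS$: every node $t$ that is realised as the maximum of some $T_x$ is stretched into an $\omega$-chain (a copy of $\NN$), so that the path $\dc{t}$ becomes a genuine ray in the modified tree while the induced map and topology are unchanged. This step is not cosmetic --- without it the evaluation map does not land in the ray space at all --- and it is entirely missing from your proposal.

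A second, smaller misattribution concerns surjectivity. You say the noetherian condition "prevents $T_x$ from properly extending $r$"; that is not what does the work (noetherianness is used only to make $(\cS,\supseteq)$ well-founded, i.e.\ a tree). Given a ray $r$, the paper applies hereditary completeness once, to the \emph{closed} subspace $U^\complement$ where $U$ is the union of the sets attached to the \emph{tops} of $r$: any point of $\bigcap_{t\in r}f(t)$ lying outside $U$ automatically satisfies $T_x=r$ exactly, since a proper extension would have to pass through a top. Your proposed transfinite "level-by-level" descent is neither needed nor obviously convergent to the right object. Finally, in the forward direction your justification of hereditary completeness only addresses completeness on $\cR(T)$ itself; the \emph{hereditary} version requires showing that the ray extracted from a centered chain lies in the given closed subspace $Y$, which is the maximality-of-$\cA$/tops argument of Proposition~\ref{prop_basecpt2} rather than anything resembling compactness of $\cP(T)$.
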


The crucial new ingredient is to identify the correct completeness condition: Indeed, branch- and ray spaces are generally not \v{C}ech complete (Examples~\ref{example_onepointcptres} and \ref{example_Michaelline} below); instead, we use a new completeness property (the existence of a `hereditarily complete' subbase) inspired by the Amsterdam completeness properties from \cite{completeness}.

Observing that all characterising conditions in Theorem~\ref{thm_charray}(2) are closed-hereditary, we conclude that the property of being a ray space is closed hereditary.

By omitting the completeness condition in Theorem~\ref{thm_charray}(2), we obtain precisely the subspaces of ray spaces (see Theorem~\ref{thm_charrayrepeat} for details), and by replacing the word `subbase' by `base' in the above theorems, one obtains corresponding characterisations of branch spaces (Theorem~\ref{thm_charbranch}), answering a question by P.~Nyikos \cite[p.8]{nyikos1999some}. 

Theorems~\ref{thm_kp_rep} and \ref{thm_charray} combined give the topological characterisation of the class of (subspaces of) end spaces of infinite graphs announced in the abstract.

\medskip
\textbf{Acknowledgements.} I would like to thank K.P.~Hart and Stevo Todorcevic for bringing the path space topology to my attention, and Will Brian for a suggestion leading to the present  characterisations of ray spaces in terms of (sub-)bases.

\section{Topological properties of ray spaces}
\label{sec_2}

\subsection{Order trees and ray spaces}

Let $T$ be an order tree. Paths, rays and branches of trees have been defined in the introduction. Every branch inherits a well-ordering from~$T$. The
\emph{height} of~$T$ is the supremum of the order types of its branches. A \emph{graph-theoretic tree} is a tree of height at most $\omega$. 

Write $\lceil t \rceil = \set{s \in T}:{s \leq t}$ for the set of predecessors (plural) of a node $t \in T$. The
\emph{height} of a point $t\in T$ is the order type of the set of strict predecessors~$\mathring{\lceil t \rceil}  :=
\lceil t \rceil  \setminus \{t\}$. For an ordinal $i$, the set $T^i$ of all points at height $i$ is
the $i$th \emph{level} of~$T$, and we
write $T^{<i} := \bigcup\set{T^j}:{j < i}$.

The intuitive interpretation of a tree order as expressing height will also
be used informally. For example, we say that $s$ is \emph{above}~$t$
if $t \leq s$, and write $\lfloor t \rfloor = \set{s \in T}:{t \leq s}$ for all nodes above $t$.  If $t < t'$, we write $[t,t'] = \set{x}:{t \leq x \leq t'}$, and call this
set a (closed) \emph{interval} in~$T$. (Open and half-open intervals in~$T$
are defined analogously.) Any~subset of $T$ is an order tree under the
ordering induced by~$T$. A subset $T' \subseteq T$ is a \emph{subtree} of $T$ if along with any two
comparable points it contains the interval in~$T$ between them, and it is a \emph{rooted} subtree if it additionally contains the root of $T$. Thus, a path is a non-splitting rooted subtree of $T$. If $t < t'$
but there is no point between $t$ and~$t'$, we call $t'$ a \emph{successor}
of~$t$ and $t$ the \emph{predecessor} (singular) of~$t'$. We write $\operatorname{succ}(t)$ for the set of successors of a node $t \in T$. A node without a successor is a \emph{leaf}. A tree without leaves is \emph{pruned}. If $t$ is not a successor
of any point it is called a \emph{limit}.

A \emph{top} of a ray $x$ in an order tree $T$ is a node $t \in T$ with $\mathring{\dc{t}}=x$.
Note that for Theorem~\ref{thm_kp_rep} to hold, we explicitly allow that a ray $x \subseteq T$ may have multiple tops, even infinitely or uncountably many, as in Examples \ref{example_onepointcpt} and \ref{example_onepointcptres} below.

\subsection{Ray spaces and the standard (sub-)base}

As mentioned in the introduction, the topology of the path space $\cP(T) \subseteq 2^T$ is generated by all open sets of the form $[t] = \set{x \in \cP(T)}:{t \in x}$ and their complements $[t]^\complement = \set{x \in \cP(T)}:{t \notin x}$.
In fact, note that the collection $\set{[t]}:{t \in T}$ induces a base for the branch space $\cB(T)$. This is generally not true for the ray space $\cR(T)$: 

\begin{lemma}
\label{lem_standardbase}
	For ray spaces, a local neighbourhood base $\cC(x)$ at some ray $x \in \cR(T)$ is given by sets of the form
$$[t,F] := [t] \setminus \bigcup_{s \in F} [s] = [t] \setminus [F] $$
where $t \in x$ and $F$ is a finite set of tops of $x$. 
\end{lemma}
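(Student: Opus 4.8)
The plan is to argue directly in the subspace topology that $\cR(T)$ inherits from $2^T$. Since each $[t]$ equals $\pi_t^{-1}(1)$ and is therefore clopen in $2^T$, hence in $\cR(T)$, every set $[t,F]$ with $F$ finite is clopen in $\cR(T)$; and when $t \in x$ while $F$ consists of tops of $x$, we have $x \in [t]$ and $x \notin [s]$ for every $s \in F$ (a top $s$ of $x$ satisfies $\mathring{\dc{s}} = x$, so in particular $s \notin x$), whence $x \in [t,F]$. Thus each member of $\cC(x)$ is genuinely an open neighbourhood of $x$, and it remains to show that these sets are cofinal among all neighbourhoods of $x$. Because the sets $[s]$ together with their complements $[s]^\complement$ form a subbase of $\cR(T)$, it suffices to fix finite sets $F_1 \subseteq x$ and $F_2 \subseteq T \setminus x$ and produce $t \in x$ and a finite set $F$ of tops of $x$ with \[ [t,F] \cap \cR(T) \ \subseteq\ U \ :=\ \set{y \in \cR(T)}:{F_1 \subseteq y,\ F_2 \cap y = \emptyset}. \]

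The condition $F_1 \subseteq y$ is easy to force: if $F_1 \neq \emptyset$ then, since $x$ is well-ordered, the element $t_0 := \max F_1$ lies in $x$, and any ray $y$ containing $t_0$ is down-closed, so $F_1 \subseteq \dc{t_0} \subseteq y$; hence it will suffice to choose $t \geq t_0$. The substance of the proof lies in arranging $F_2 \cap y = \emptyset$, and for this I would analyse each $s \in F_2$ according to whether or not $x \subseteq \dc{s}$.

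Suppose first that $x \not\subseteq \dc{s}$, and put $t_s := \min(x \setminus \dc{s}) \in x$. Then $t_s \not\leq s$ by choice; and $s \leq t_s$ is impossible, since it would give $s \in \dc{t_s} \subseteq x$, contradicting $s \notin x$. Hence $t_s$ and $s$ are incomparable, so no ray --- being a chain --- can contain both of them; it will suffice to choose $t \geq t_s$. Suppose instead that $x \subseteq \dc{s}$. Then $\dc{s}$ is a down-closed, well-ordered chain with $s \in \dc{s} \setminus x$, so $m_s := \min(\dc{s} \setminus x)$ is well-defined, and I claim it is a top of $x$. Indeed $\dc{m_s} \subseteq \dc{s}$, so every $u < m_s$ lies in $\dc{s}$ and hence, by minimality of $m_s$, in $x$; conversely every $u \in x$ satisfies $u < m_s$, because $m_s \leq u$ would give $m_s \in \dc{u} \subseteq x$. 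Thus $\mathring{\dc{m_s}} = x$, as claimed. Now if a ray $y$ contains $s$, then $m_s \in \dc{s} \subseteq y$; so placing $m_s$ into $F$ excludes all such $y$.

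It then remains to set $t := \max\bigl( F_1 \cup \set{t_s}:{s \in F_2,\ x \not\subseteq \dc{s}} \bigr) \in x$ --- or, if that set happens to be empty, any element of $x$ --- and $F := \set{m_s}:{s \in F_2,\ x \subseteq \dc{s}}$, a finite set of tops of $x$, and to check that $x \in [t,F]$ and $[t,F] \cap \cR(T) \subseteq U$. Both verifications just read off the three observations above: for $y \in [t,F]\cap\cR(T)$ one has $F_1 \subseteq \dc{t} \subseteq y$; each $t_s$ lies in $\dc{t} \subseteq y$, which forces the incomparable $s$ out of $y$; and if any remaining $s$ were in $y$ it would drag $m_s \in F$ into $y$, which is excluded. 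I expect the one genuinely non-routine step to be identifying $m_s$ as a top of $x$ --- more conceptually, recognising that the only rays near $x$ that can violate a constraint ``$s \notin y$'' are those properly extending $x$ upward, and that the first node by which such a ray leaves $x$ is always a top of $x$, so that over a finite set $F_2$ only finitely many tops are ever relevant. The rest is bookkeeping with down-closure and the well-ordering of chains.
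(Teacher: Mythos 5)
Your proof is correct and follows essentially the same route as the paper's: both reduce to a subbasic neighbourhood $\bigcap_{u\in F_1}[u]\setminus\bigcup_{s\in F_2}[s]$ of $x$, replace the positive part $F_1$ by its maximum, and handle each $s \in F_2$ by distinguishing whether $x \subseteq \dc{s}$ (in which case $\min(\dc{s}\setminus x)$ is a top of $x$ that can replace $s$) or not (in which case the constraint on $s$ is absorbed by moving $t$ up to a node of $x$ incomparable with $s$). The only difference is presentational: the paper packages this as a minimality argument on $|E|+|F|$, whereas you construct $t$ and $F$ directly, which is arguably cleaner.
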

\begin{proof}
	By definition of our subbase, every open set $U$ with $x \in U \subseteq \cR(T)$ contains a basic set $B$ of the form
$$x \in B = \bigcap_{t \in E} [t] \cap \bigcap_{s \in F} [s]^\complement= \bigcap_{t \in E} [t] \setminus \bigcup_{s \in F} [s] \subseteq U$$
for some finite sets $E,F \subset T$. Pick $B$ with $|E| + |F|$ of minimal size. Since $x \in B$ implies $E \subset x$, we could replace $E$ by its maximum. Hence, $|E| \leq 1$. 
Next, we claim that for every $s\in F$ there is a top $s'$ of  $x$ with $s' \leq s$. Indeed, if say $\dc{s_0} \cap x \subsetneq x$, pick $t \in x \setminus (E \cup \dc{s_0})$ to obtain
$$x \in [t] \setminus \bigcup_{s \in F_0} [s] \subseteq B$$
for $F_0 = F \setminus \Set{s_0}$, contracting the minimality of $|E|+2 |F|$. Hence, for every $s\in F$ there is a top $s'$ of  $x$ with $s' \leq s$, and by setting $F' = \set{s'}:{s \in F}$, we obtain
\[x \in [t,F'] \subseteq B \subseteq U \subseteq \cR(T) \]
with $t \in x$ and $F'$ a finite set of tops of $x$ as desired.
\end{proof}

  The \emph{standard base} of $\cR(T)$ is the collection 
$$\cC = \bigcup_{x \in \cR(T)} \cC(x) =  \set{ [ t,F]}:{ t \in T, \; F \subset T \text{ finite set of limits}, \; t \in \mathring{\dc{s}} = \mathring{\dc{s'}} \text{ for all } s \neq s' \in F},$$
and elements of $\cC$ will be called \emph{standard basic opens}. In Section~\ref{subsec_baseproperties} we will investigate further properties of the standard base. But before that, we look at some typical examples of ray spaces.

\subsection{Examples of ray spaces}

\begin{exmp}
The \emph{Cantor set} can be represented as the ray space of a full binary tree $2^{<\omega}$.
More generally, for pruned trees $T$ of height $\omega$ we have $\cR(T) = \cB(T)$, and these are precisely the completely ultrametrizable spaces, see e.g.~\cite{hughes2004trees}.
\end{exmp}

\begin{exmp}
\label{example_onepointcpt} The \emph{one-point compactification of an uncountable discrete space of size $\kappa$} can be represented as the ray space of the pruned tree $T$ of height $\omega \cdot 2$ such that every node has precisely one successor, but $T^\omega$, the $\omega$th level of $T$, has size $\kappa$. 
\end{exmp}

\begin{proof}
	Indeed, since $|T^\omega| = \kappa$ there are $\kappa$ branches, and each branch is isolated in $\cR(T)$. Now consider the path $x = T^{<\omega}$. By the definition of the standard base, a local base at $x$ is given by the sets
	$$[t,F] := [t] \setminus \bigcup_{s \in F} [s] = [t] \setminus [F] $$
where $t \in x$ and $F \subset T^\omega$ is a finite set of tops of $x$. Hence, $[t,F]$ contains all but finitely many branches, and so $\cR(T)$ is homeomorphic to the one-point compactification of an uncountable discrete space of size $\kappa$.
\end{proof}

\begin{exmp}
\label{example_onepointcptres} A \emph{resolution of the one-point compactification of an uncountable discrete space of size $\kappa$ in which every isolated point has been replaced by a clopen copy of $\NN$}. This space can be represented as the ray space of the pruned tree $T$ of height $\omega \cdot 2$ where every non-limit node has precisely one successor, every limit node has countably many successors, and $T^\omega$, the $\omega$th level of $T$, has size $\kappa$.
\end{exmp}



\begin{lemma}
	The ray space in Example~\ref{example_onepointcptres} fails to be \v{C}ech-complete. In particular, end spaces of graphs are generally not \v{C}ech-complete.
\end{lemma}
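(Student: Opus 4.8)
The plan is to apply the classical characterisation of \v{C}ech completeness for Tychonoff spaces: a Tychonoff space is \v{C}ech-complete if and only if it admits a \emph{complete sequence of open covers}, that is, a sequence $(\cA_i)_{i\in\omega}$ of open covers such that every family of closed sets with the finite intersection property that is \emph{subordinated} to the sequence --- meaning that for each $i$ some member of the family lies inside some member of $\cA_i$ --- has nonempty intersection (see e.g.\ Engelking, \emph{General Topology}, Thm.~3.9.2). Since $\cR(T)\subseteq 2^T$ is Tychonoff, it suffices to exhibit, for an \emph{arbitrary} sequence of open covers of $\cR(T)$, a witnessing family showing that it is not complete. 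The entire obstruction will be located at a single ray.

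To set up, I would argue as in Example~\ref{example_onepointcpt} that the path $x_0 := T^{<\omega}$ is a ray whose set of tops is exactly the $\omega$th level $T^\omega = \set{u_\alpha}:{\alpha<\kappa}$, which is uncountable. For each $\alpha$ put $B_\alpha := [u_\alpha]\subseteq\cR(T)$, the set of rays through $u_\alpha$. Three observations, each read directly off the (pruned, height $\omega\cdot2$) structure of $T$: $B_\alpha$ is clopen in $\cR(T)$ (since $[u_\alpha]$ is a subbasic clopen set); $B_\alpha$ is countably infinite, because $u_\alpha$ is a limit node with countably many successors and, $T$ being pruned, each successor extends to a unique ray of $T$; and every ray in $B_\alpha$ meets the level $T^\omega$ in $u_\alpha$ only, being a chain.

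Now let $(\cA_i)_{i\in\omega}$ be any sequence of open covers of $\cR(T)$. For each $i$ I pick $U_i\in\cA_i$ with $x_0\in U_i$; by Lemma~\ref{lem_standardbase}, $U_i$ contains a standard basic neighbourhood $[t_i]\setminus[F_i]$ of $x_0$ with $t_i\in x_0$ and $F_i\subseteq T^\omega$ finite. As $\bigcup_i F_i$ is countable and $\kappa$ is not, I fix $\beta<\kappa$ with $u_\beta\notin F_i$ for all $i$. Then $t_i<u_\beta$ (because $t_i\in x_0=\mathring{\dc{u_\beta}}$) gives $B_\beta\subseteq[t_i]$, and $u_\beta\notin F_i$ together with the third observation above gives $B_\beta\cap[F_i]=\emptyset$; hence $B_\beta\subseteq[t_i]\setminus[F_i]\subseteq U_i$ for every $i$. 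I then take $\cF := \set{B_\beta\setminus E}:{E\subseteq B_\beta\text{ finite}}$: this is a family of nonempty closed subsets of $\cR(T)$ with the finite intersection property, it is subordinated to $(\cA_i)$ via $B_\beta\in\cF$ and $B_\beta\subseteq U_i$, but $\bigcap\cF=\emptyset$. So $(\cA_i)$ is not complete, and $\cR(T)$ is not \v{C}ech-complete.

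Finally, the tree of Example~\ref{example_onepointcptres} has height $\omega\cdot2$ and hence only countably many levels, each an antichain, so it is special; Theorem~\ref{thm_kp_rep} then yields that $\cR(T)$ is homeomorphic to an end space of a graph, giving the ``in particular''. I do not expect a single hard estimate here; the one point that needs genuine care is the correct use of Lemma~\ref{lem_standardbase} --- that an arbitrary neighbourhood of $x_0$ is pinned down to the form $[t]\setminus[F]$ with $F$ a \emph{finite} set of tops --- together with reading the three properties of the $B_\alpha$ off the tree without getting confused by the (possibly very many) tops of $x_0$. (An alternative is to run the same argument inside the compactification $\overline{\cR(T)}\subseteq\cP(T)$, which contains every path $\dc{u_\alpha}\notin\cR(T)$ while no neighbourhood of $x_0$ there can omit more than finitely many of them; then $\cR(T)$ is not a $G_\delta$ of this compactification, hence not \v{C}ech-complete.)
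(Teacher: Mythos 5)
Your proposal is correct and follows essentially the same route as the paper: both invoke Engelking's characterisation of \v{C}ech-completeness via complete sequences of open covers, pin any neighbourhood of the ray $T^{<\omega}$ down to a standard basic set $[t]\setminus[F]$ with $F$ a finite set of tops, use uncountability of $T^\omega$ to find a top $u_\beta$ avoided by all the $F_i$, and then take the cofinite filter on the closed countable discrete set $[u_\beta]$ as the witnessing family with empty intersection. The only (harmless) addition is your explicit justification of the ``in particular'' clause via Theorem~\ref{thm_kp_rep}.
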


\begin{proof}
	Using the internal characterisation of \v{C}ech-completeness as in \cite[Theorem~3.9.2]{engelking1989book}, it suffices to show: Given any sequence of open covers $\cU_n$ ($n \in \NN$) of $\cR(T)$, there is a filter $\cG$ of closed sets such that $\cG$ is \emph{less than} $\cU_n$ for all $n \in \NN$ (meaning there are $G_n \in \cG$ and $U_n \in \cU_n$ with $G_n \subseteq U_n$), but $\bigcap \cG = \emptyset$. 
	
	Write $x = T^{<\omega} \in \cR(T)$ for the unique non-isolated ray in $T$. For every $n \in \NN$ fix $U_n \in \cU_n$ with $x \in U_n$, and choose a standard basic open $x \in [t_n,F_n] \subseteq U_n$ where $F_n$ is a finite set of tops of $x$. Then $\bigcup F_n$ is countable. As $|T^\omega|$ is uncountable, there is a top $s$ of $x$ not contained in any $F_n$. Note that $[s] \cong \NN$ is a closed copy of the countable discrete space contained in $\bigcap_{n \in \NN} U_n$. But then the filter $\cG$ generated by the cofinite filter on $[s]$ is less than $\cU_n$ for all $n \in \NN$, but $\bigcap \cG = \emptyset$. 
	\end{proof}

\begin{exmp}
The \emph{Alexandroff duplicate of a Cantor set} can be represented as the ray space of the pruned tree $T \subset 2^{<\omega \cdot 2}$ such that $2^{\leq \omega} \subset T$ and every node of height $\geq \omega$ has precisely one successor.
\end{exmp}

\begin{proof}
	The Alexandroff duplicate $\mathcal{A}(X)$ of space $X$ is the space on the set $\Set{0,1} \times X$ where every point in $\Set{1} \times X$ is isolated any every point $(0,x)$ has a neighbourhood base of the form $(\Set{0,1} \times U)  \setminus \Set{(1,x)}$ for an open neighbourhood $x \in U \subseteq X$ of $x$ in $X$.
	To see that the ray $\cR(T)$ space in question is homeomorphic to the Alexandroff duplicate of a Cantor set, we verify that the map 
	$$f \colon \mathcal{A}(\cR(2^{<\omega})) \to \cR(T), \; (i,x) \mapsto \begin{cases}
		x & \text{if } i=0, \\ b \in \cB(T), \; x \subsetneq b & \text{if } i=1
	\end{cases}$$
	is a homeomorphism. It is clearly bijective; by compactness, 
	it remains to show that $f$ is continuous, i.e.\ that preimages of subbasic clopen sets are open. 	So let $[t] \subset \cR(T)$ be any subbasic clopen set. If $t \notin T^{<\omega}$, then $[t]$ consists of an isolated branch $b$, and $f^{-1}(b) = \Set{(1,x)}$ is isolated, too. Otherwise, if $t \in T^{<\omega}$, then $t$ determines a basic clopen set $U$ of the Cantor set, and $f^{-1}([t]) = \Set{0,1} \times U$ is clopen.
	 \end{proof}
	 
	 With a similar construction, we see that  every Alexandroff duplicate of a competely ultrametrizable space can be represented by a ray space. 
	 Call a ray of the binary tree $2^{<\omega}$ \emph{rational} if its corresponding $0-1$-sequence becomes eventually constant, and \emph{irrational} otherwise. 
	 
\begin{exmp}
\label{example_Michaelline} The \emph{Michael-line} can be represented as the branch space of the pruned subtree $T'$ of the tree $T$ from the previous example, in which we delete all elements of $T$ above a rational ray of $2^{<\omega}$. 
\end{exmp}

\begin{lemma}
\label{lem_Michaelline}
	The Michael-line branch space is Baire, but contains a closed subspace which is not Baire, namely a closed copy of $\mathbb{Q}$. 
	\end{lemma}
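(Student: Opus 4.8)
The plan is to read off the structure of $\cB(T')$ from Example~\ref{example_Michaelline}. Unravelling the definitions, $T'$ consists of the binary tree $2^{<\omega}$ together with, for every \emph{irrational} ray $x$ of $2^{<\omega}$, the limit node $t_x$ with $\mathring{\dc{t_x}}=x$ and the (non-splitting) path of order type $\omega$ placed on top of it, while the analogous nodes above the \emph{rational} rays have been removed. Hence the branches of $T'$ come in two kinds: for each irrational ray $x$ a branch $b_x$ of order type $\omega\cdot 2$ (the ray $x$ followed by that path), and for each rational ray $q$ the ray $q$ itself, which is now inclusion-maximal. Since $b_x$ is the only branch through $t_x$ (and through every node above it), each set $[t]\cap\cB(T')$ with $t$ of height $\ge\omega$ is the singleton $\Set{b_x}$; and, using that $\set{[t]}:{t\in T'}$ is a base for $\cB(T')$, a neighbourhood base at $b_q$ is $\set{[t]\cap\cB(T')}:{t\in q}$. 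Identifying a ray of $2^{<\omega}$ with the point of $2^\omega$ it determines, $\cB(T')$ is thus the Cantor set $2^\omega$ retopologised so that the (dense, co-countable) set $I$ of irrational points becomes discrete, while each of the countably many, dense rational points retains a clopen neighbourhood base inherited from $2^\omega$ --- a Cantor-tree incarnation of the Michael line.

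For Baireness I would first note that the set $D=\set{b_x}:{x\in I}$ of isolated branches is dense: every nonempty basic open set $[t]\cap\cB(T')$ is either a singleton $\Set{b_x}$ (when $t$ has height $\ge\omega$), or has $t\in 2^{<\omega}$ and then contains $b_x$ for some irrational extension $x$ of $t$, as $I$ is dense in $2^\omega$. It is then a standard fact that a space with a dense set of isolated points is Baire: if $(U_n)_{n\in\NN}$ are dense open sets, then each isolated point lies in every $U_n$ (apply density of $U_n$ to the open singleton), so $\bigcap_{n\in\NN}U_n\supseteq D$ is dense. Hence $\cB(T')$ is Baire.

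For the second half, put $Q:=\set{b_q}:{q\text{ a rational ray of }2^{<\omega}}$. Its complement in $\cB(T')$ is $D$, which is open, so $Q$ is closed. On $Q$ the only basic open sets with nonempty trace are the $[t]$ with $t\in 2^{<\omega}$, and under the above identification the subspace topology of $Q$ is exactly that of the set of eventually-constant sequences inside $2^\omega$. So $Q$ is a countable metrizable space, and it has no isolated points (any basic clopen set of $2^\omega$ meeting $Q$ contains two eventually-constant sequences). By Sierpi\'nski's theorem every such space is homeomorphic to $\mathbb{Q}$, and $\mathbb{Q}$ --- being the union of its nowhere-dense singletons --- is not Baire. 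This yields the claimed closed copy of $\mathbb{Q}$ and completes the proof.

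The argument is essentially bookkeeping: the one place needing care is the explicit identification of the branches of $T'$ and of the neighbourhood bases at the two types of branch. Once this is set up, the two ingredients `dense set of isolated points $\Rightarrow$ Baire' and `Sierpi\'nski's characterisation of $\mathbb{Q}$' finish the job, so I do not expect a real obstacle.
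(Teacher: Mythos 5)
Your proof is correct, and the second half (the closed copy of $\mathbb{Q}$) is exactly the paper's argument: the branches of order type $\omega$ form a crowded, countable, metrizable subspace, hence a copy of $\mathbb{Q}$ by Sierpi\'nski, and it is closed because all other branches are isolated. Where you diverge is the Baire half: the paper simply invokes Proposition~\ref{prop_basecptbranch}, that \emph{every} branch space is base-complete (the standard base $\set{[t]}:{t\in T}$ is complete) and hence Baire, whereas you give a direct, elementary argument specific to this tree, namely that the isolated branches $b_x$ over irrational rays form a dense set, and any space with a dense set of isolated points is Baire. Both are valid; the paper's route is the one that generalises (and the general proposition is needed elsewhere in the paper anyway), while yours is self-contained and makes the structure of this particular example more transparent. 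Your preliminary bookkeeping --- the identification of the two kinds of branches of $T'$ and of their neighbourhood bases --- is also accurate.
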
 
	\begin{proof}
	We show below in 
	Proposition~\ref{prop_basecptbranch} that branch spaces are Baire. However, the set of all branches of order type $\omega$ in $\cB(T')$ forms a crowded, countable metrizable space, so a copy of $\mathbb{Q}$ by Sierpinski's characterisation \cite{sierpinski1920propriete}. Since all remaining branches in $\cB(T')$ are isolated, this copy of $\mathbb{Q}$ is closed.
	\end{proof}


\subsection{The representation theorem}
\label{sec_repthm}

Our Representation Theorem~\ref{thm_kp_rep} has been phrased somewhat differently than the original version in \cite{kurkofkapitz_rep}. To see that both versions are equivalent, recall that the original topology $\tau_{seq}$ on $\cR(T)$ for a special tree $T$ has been described in terms of convergent sequences \cite[Lemma~5.3]{kurkofkapitz_rep} as follows:
Let $x$ and $x_n$ ($n \in \NN$) be rays in a special tree $T$. Let $A\subset\NN$ consist of all numbers~$n$ for which $x \subsetneq x_n$, and let $B:=\NN\setminus A$.
\begin{enumerate}
    \item We have convergence $x_n \to x$  for $n \in A$ and $n\to\infty$ if and only if $A$ is infinite and for every top $t$ of $x$ there are only finitely many $n \in A$ with $t \in x_n$.
    \item We have convergence $x_n \to x$  for $n \in B$ and $n\to\infty$ if and only if $B$ is infinite and for every node $t \in x$ there are only finitely many $n \in B$ with $x \cap x_n \subset \mathring{\dc{t}}$.
\end{enumerate}
Write $\tau$ for the topology on $\cR(T) \subseteq 2^T$ as defined in the present paper. In order to verify that $\tau$ and $\tau_{seq}$ induce the same closed sets, given $X \subseteq \cR(T)$ we need to show that $x \in \closure{X}$ with respect to $\tau$ if and only if there is a sequence $(x_n) \subseteq X$ such that $x_n \to x$ in $\tau_{seq}$. 

For the forwards implication, suppose that $x \in \closure{X} \setminus X$. If there are infinitely many tops $s_n$ ($n \in \NN$) of $x$ such that $[s_n] \cap X \neq \emptyset$ then select $x_n \in [s_n] \in X$ and note that $x_n \to x$ in $\tau_{seq}$ according to (1). Hence, we may suppose that there are only finitely many tops $F = \Set{s_1,\ldots, s_k}$ of $x$ such that $X \cap [s_i] \neq \emptyset$. Let $X' = X \setminus [F]$. Since $[F] \not\ni x$ is clopen, it follows that $x \in \closure{X'}$. Let $t_n$ be an increasing, cofinal sequence in $x$ (which exists since $T$ is special). Since $\bigcap_{n \in \NN}[t_n] \cap X' = \emptyset$ but $[t_n] \cap X' \neq \emptyset$, we can choose pairwise distinct $x_n \in X' \cap [t_n]$, and we have convergence $x_n \to x$ in $\tau_{seq}$ according to (2).

For the backwards implication, suppose $x_n \to x$ in $\tau_{seq}$. We show that $x \in \closure{X}$ with respect to $\tau$. Let $[t,F]$ be a standard basic open neighbourhood of $x$ where $t \in x$ and $F$ a finite set of tops of $x$. If $x_n \to x$ according to (1), then only finitely many members of $\set{x_n}:{n \in \NN}$ lie above $F$, so infinitely many $x_n$ belong to $[t,F]$.  If $x_n \to x$ according to (2), then only finitely many members of $x_n$ do not contain $t$, so infinitely many $x_n$ belong to $[t,F]$.

\subsection{Base properties I}
\label{subsec_baseproperties}
\label{sec25}

Let $\cA$ be any collection of sets, for example a base, a subbase or a cover of some topological space.

	\begin{itemize}
	\item $\cA$ is \emph{nested} if for all $A,B \in \cA$ with $A \cap B \neq \emptyset$ we have $A \subseteq B$ or $B \subseteq A$. In the topological literature, nested set collections are also called \emph{rank-1} collections or \emph{nonarchimedean} collections \cite{nyikos1975some,nyikos1999some}, whereas in combinatorics and computer science, also the term \emph{laminar} collection is used  \cite{cheriyan19992}.
	\item $\cA$ is \emph{noetherian} if every $\subseteq
$-increasing sequence of elements from $\cA$ is eventually constant.
	\item $\cA$ is \emph{centered} if any finite subfamily of $\cA$ has non-empty intersection.
	\item $\cA$ is \emph{complete} if any centered subfamily of $\cA$ has non-empty intersection. We remark that the existence of a complete base of clopen sets for $X$ implies any of the three Amsterdam completeness properties \emph{co-compactness, base-compactness} and \emph{sub-compactness}, see \cite{completeness}, which in turn all imply the Baire property.
	\item $\cA$ is \emph{hereditarily complete} if for every closed subspace $Y \subseteq X$, the collection $\cA \restriction Y = \set{A \cap Y}:{A \in \cA}$ is complete.
	\item We say a space is \emph{base-complete} is it admits a complete base.
\end{itemize}

In the remainder of this section, we investigate properties of the standard base $\cC$ of a ray space $\cR(T)$, and show, using standard basic open sets, that ray spaces are  monotonically normal.
\begin{lemma}
\label{lem_increasing2}
For any ray $x \in \cR(T)$, its local neighbourhood base $\cC(x)$ is noetherian.
\end{lemma}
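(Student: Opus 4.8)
The plan is to unwind the definitions of $\cC(x)$ and of \emph{noetherian}, and to reduce the claim to a statement about how the standard basic neighbourhoods $[t,F]$ of a fixed ray $x$ are ordered by inclusion. Recall from Lemma~\ref{lem_standardbase} that $\cC(x)$ consists of the sets $[t,F] = [t] \setminus [F]$ where $t \in x$ and $F$ is a finite set of tops of $x$. First I would establish a clean combinatorial description of when $[t,F] \subseteq [t',F']$ for two such sets. The containment $[t] \setminus [F] \subseteq [t'] \setminus [F']$ should amount to: (a) $t' \leq t$ (equivalently $t' \in \mathring{\dc t}$ or $t' = t$, since both lie on the chain $x$), together with (b) a condition tying $F$ and $F'$ together, roughly that every $s' \in F'$ lies weakly below some $s \in F$, or is otherwise rendered irrelevant by the fact that the relevant rays are already excluded. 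I expect that the precise bookkeeping of condition (b) is where one has to be slightly careful, because distinct tops of $x$ are pairwise incomparable (they all have the same set of strict predecessors $x$), so ``$s' \leq s$'' forces $s' = s$; the honest statement is likely that $[t,F] \subseteq [t',F']$ iff $t' \leq t$ and $F' \cap \lfloor t \rfloor \subseteq F$, i.e.\ every top in $F'$ that could still be reached from $[t]$ already lies in $F$.

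With such a description in hand, the argument is then a straightforward extraction of an eventually-constant conclusion from an increasing sequence. Suppose $[t_1,F_1] \subseteq [t_2,F_2] \subseteq \cdots$ is a $\subseteq$-increasing sequence in $\cC(x)$. By part (a) of the description, the nodes $t_1 \geq t_2 \geq t_3 \geq \cdots$ form a decreasing sequence in the chain $\mathring{\dc s}$ (for any top $s$ of $x$), equivalently a decreasing sequence in the well-ordered set $x$; hence it is eventually constant, say $t_n = t$ for all $n \geq N$. Then for $n \geq N$ part (b) shows that the sets $F_n \cap \lfloor t \rfloor$ form a $\subseteq$-decreasing sequence of finite sets, hence eventually constant; since for $n \geq N$ the value of $[t_n,F_n] = [t, F_n]$ depends only on $t$ and on $F_n \cap \lfloor t\rfloor$ (tops of $x$ not above $t$ contribute nothing to $[t] \setminus [F_n]$), the sequence $[t_n,F_n]$ is itself eventually constant. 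That is exactly what \emph{noetherian} requires.

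The main obstacle, as indicated, is pinning down the inclusion criterion for standard basic opens precisely enough — in particular handling the tops of $x$ in $F'$ that are \emph{not} above $t$, which are vacuously excluded from $[t]$ and so impose no real constraint. A secondary subtlety is the degenerate case in which $x$ has no top at all (then every $F$ is empty and $\cC(x) = \set{[t]}:{t \in x}$, and the claim reduces immediately to the well-ordering of $x$), and the case in which $t$ is itself not below any element of $F_n$, which is handled by the same ``contributes nothing'' observation. None of these require real work once the criterion is stated correctly; the whole proof is short. I would also note in passing that this lemma is the local counterpart of the global statement that the standard base $\cC$ is noetherian, which will presumably be needed for Theorem~\ref{thm_charray}, but for the present statement only the fixed-$x$ version is claimed.
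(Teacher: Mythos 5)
Your overall strategy is the same as the paper's: along a $\subseteq$-increasing chain in $\cC(x)$ one tracks the two parameters of the representation $[t,F]$, uses well-foundedness of $T$ to control the $t$'s and finiteness to control the $F$'s. (The paper argues by contradiction with a strictly increasing chain: at each step either $t$ decreases or $F$ shrinks, the latter only finitely often, so the $t$'s would descend infinitely.) However, your key inclusion criterion is wrong in the ``only if'' direction, and the step that relies on it fails. First, $[t,F]\subseteq[t',F']$ does \emph{not} imply $t'\leq t$: if $t'$ is the unique successor of $t$ in $T$, then every ray through $t$ passes through $t'$ (a ray has no maximum, and the second element of the well-ordered interval $[t,u]$ for any $u>t$ in the ray is $t'$), so $[t]=[t']$ although $t<t'$. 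Hence a $\subseteq$-increasing --- even constant --- sequence of sets can be presented with strictly \emph{increasing} nodes $t_n$, and your conclusion ``$t_1\geq t_2\geq\cdots$'' is simply false. Second, a top $s$ with $[s]=\emptyset$ (e.g.\ a leaf sitting on top of $x$) can be inserted into or deleted from $F$ without changing the set $[t,F]$, so the $F$-clause of your biconditional also fails as a necessary condition. Incidentally, the subtlety you do worry about --- tops of $x$ in $F'$ that are not above $t$ --- cannot occur: a top $s$ of $x$ satisfies $\mathring{\dc{s}}=x\ni t$, so $s>t$ for \emph{every} $t\in x$, and $F'\cap\lfloor t\rfloor=F'$ always.

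The argument can be repaired, but it needs a different bookkeeping. Pass to a strictly increasing chain $[t_1,F_1]\subsetneq[t_2,F_2]\subsetneq\cdots$ (if the original chain is not eventually constant) and replace each $F_n$ by $G_n=\set{s\in F_n}:{[s]\neq\emptyset}$, which does not change the sets. Since every top of $x$ lies above every $t_n$ and distinct tops of $x$ are incomparable (hence have disjoint cones $[s]$), one checks that $[t_n,G_n]\subseteq[t_{n+1},G_{n+1}]$ forces $G_{n+1}\subseteq G_n$ \emph{regardless} of how the $t_n$ behave: for $s'\in G_{n+1}$ the nonempty set $[s']$ lies inside $[t_n]$ and is disjoint from $[t_{n+1},G_{n+1}]\supseteq[t_n,G_n]$, so it meets $[s'']$ for some $s''\in G_n$, whence $s'=s''$. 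The finite sets $G_n$ therefore stabilise, and once $G_{n+1}=G_n$, strictness of the inclusion yields a ray $y$ with $t_{n+1}\in y$, $y\cap G_n=\emptyset$ and $t_n\notin y$, which (as $t_n,t_{n+1}$ lie on the chain $x$ and $y$ is down-closed) gives $t_{n+1}<t_n$; an infinite strict descent in the well-ordered set $x$ is impossible. So the lemma holds, but as written your proof rests on a false intermediate claim rather than on a provable one.
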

\begin{proof}
We show that every increasing sequence of standard basic open sets in $\cC(x)$ is eventually constant. Suppose for a contradiction that
$$[t_0,F_0] \subsetneq [t_1,F_1] \subsetneq [t_2,F_2] \subseteq \cdots$$
is an infinite, strictly increasing sequence of standard basic open sets. There are two ways to make a basic open set $[t,F] \in \cC(x)$ strictly bigger: either decrease $t$ in $(T,\leq)$ or take a strict subset of $F$ (where $F$ is a finite set of tops of $x$). The latter option one can iterate only finitely often, so there is an infinite subsequence $n_i$ with 
$$t_{n_0} > t_{n_1} > t_{n_2} > \cdots.$$
But this contradicts well-foundedness of $T$.
\end{proof}

\begin{prop}
\label{prop_increasing}
In any ray space $\cR(T)$, the collection $\cS=\set{[t]}:{t \in T}$ is noetherian. \hfill \qed
\end{prop}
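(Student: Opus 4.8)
The claim is that $\cS = \set{[t]}:{t \in T}$ is noetherian, meaning every $\subseteq$-increasing sequence $[t_0] \subseteq [t_1] \subseteq [t_2] \subseteq \cdots$ of subbasic clopen sets is eventually constant. The plan is to reduce the inclusion relation on $\cS$ to the tree order on $T$, and then invoke well-foundedness of $T$.

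First I would analyse when $[t] \subseteq [t']$ holds. Recall $[t] = \set{x \in \cR(T)}:{t \in x}$. If $t' \leq t$ in $T$, then any ray $x$ with $t \in x$ is down-closed, so $t' \in x$; hence $[t] \subseteq [t']$. Conversely, suppose $[t] \subseteq [t']$ and $[t] \neq \emptyset$. Then some ray $x$ contains $t$, so $x$ contains $\dc{t}$; since $x$ is a chain, $t$ is comparable to every element of $x$, and as $x$ contains $t'$ we get $t' \leq t$ or $t < t'$. If $t < t'$, I would derive a contradiction by exhibiting a ray through $t$ avoiding $t'$: extend $\dc{t}$ upward along any branch of $\lfloor t \rfloor$ that avoids $t'$ (for instance, if $t$ has a successor, take one different from the one below $t'$, or if $t$ is a limit below $t'$, a ray may simply stop — indeed $\mathring{\dc{t'}} \supsetneq \dc{t}$ need not be a ray, but the ray $\mathring{\dc{t'}}$ itself, if $t'$ is a top, works, or one routes around). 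The cleanest route: since $[t] \subseteq [t']$ forces every ray through $t$ to pass through $t'$, and $t < t'$, pick any ray $y$ with $t \in y$ and $y \subseteq \dc{t'} \setminus \set{t'}$ is impossible only if... — here the only subtlety is whether such a ray exists, which it does because $T$ is an order tree and one can always find a ray extending $\dc{t}$ that does not contain the specific node $t'$ above $t$ (route through a different successor of the predecessor of $t'$, or terminate at a limit). So $[t] \subseteq [t']$ with $[t] \neq \emptyset$ iff $t' \leq t$.

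Given this, an increasing sequence $[t_0] \subseteq [t_1] \subseteq \cdots$ of nonempty members of $\cS$ corresponds to a $\leq$-decreasing sequence $t_0 \geq t_1 \geq t_2 \geq \cdots$ in $T$. Since $T$ is a tree, $\dc{t_0}$ is well-ordered, and all the $t_i$ lie in $\dc{t_0}$; a well-ordered set has no infinite strictly descending sequence, so the sequence $(t_i)$ is eventually constant, hence so is $([t_i])$. The empty members of $\cS$ (if $[t] = \emptyset$ for some leaf-like $t$, though for pruned trees this does not occur) are handled trivially: once a sequence hits a nonempty set it stays there, and $\emptyset \subseteq [t']$ for all $t'$ is fine, but an increasing sequence can contain $\emptyset$ only as an initial segment.

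I do not expect a serious obstacle here; this is essentially the same argument as Lemma~\ref{lem_increasing2} but simpler, since there is no finite set $F$ of tops to manage — only the $t$-coordinate, and descending chains in $t$ are forbidden by well-foundedness of $T$. The one point requiring a line of care is the converse direction of the inclusion characterisation (that $[t] \subseteq [t']$ and $t \neq t'$ forces $t' < t$ rather than $t < t'$), which follows by constructing a ray through $t$ that avoids $t'$; alternatively one can bypass this entirely by noting that a strictly increasing sequence $[t_0] \subsetneq [t_1] \subsetneq \cdots$ yields nodes with strictly increasing point-sets realised by rays, forcing the associated $t_i$ to be pairwise distinct and $\leq$-comparable (each lying in the others' rays), hence forming an infinite chain in some $\dc{t_i}$, contradicting well-foundedness once we check the chain descends.

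\begin{proof}
Suppose for contradiction that $[t_0] \subsetneq [t_1] \subsetneq [t_2] \subsetneq \cdots$ is an infinite strictly increasing sequence in $\cS$. All these sets are nonempty (a strictly larger set is nonempty, and dropping finitely many initial terms we may assume $[t_0]\neq\emptyset$). Pick a ray $x_0 \in [t_0] \subseteq [t_i]$ for all $i$; then $t_i \in x_0$ for every $i$, so all the $t_i$ lie in the chain $x_0$ and are pairwise comparable. We claim $t_{i+1} \leq t_i$ for each $i$: if not, then $t_i < t_{i+1}$, and since $T$ is an order tree we may choose a ray $y$ with $t_i \in y$ but $t_{i+1} \notin y$ (extend $\mathring{\dc{t_{i+1}}}\cup\set{t_i}$, if this is already a ray take it, otherwise continue above $t_i$ through a successor or cofinal chain avoiding $t_{i+1}$, which is possible because $t_{i+1}\not\leq$ the chosen continuation); then $y \in [t_i] \setminus [t_{i+1}]$, contradicting $[t_i]\subseteq[t_{i+1}]$. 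Hence $t_0 \geq t_1 \geq t_2 \geq \cdots$, and since $[t_i]$ are pairwise distinct, so are the $t_i$, giving an infinite strictly $\leq$-descending sequence inside the well-ordered set $\dc{t_0}$ — a contradiction.
\end{proof}
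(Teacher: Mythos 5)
Your overall strategy --- reduce strict growth of the sets $[t_i]$ to strict descent of the nodes $t_i$ and invoke well-foundedness of $T$ --- is exactly the paper's (the proposition is meant to follow from the same observation made inside the proof of Lemma~\ref{lem_increasing2}), and your conclusion is correct. However, the justification you give for the pivotal step rests on a false general claim about trees. You assert that whenever $t < t'$ one can find a ray $y$ with $t \in y$ and $t' \notin y$ by ``routing around'' $t'$, and you deduce from this the characterisation ``$[t] \subseteq [t']$ with $[t]\neq\emptyset$ iff $t' \leq t$''. Both fail: take $T = \omega$, a single chain, whose unique ray is $T$ itself; then $[n] = \Set{T}$ for every $n$, so $[0] \subseteq [1]$ although $1 \not\leq 0$, and there is no ray containing $0$ but avoiding $1$. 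The same happens whenever a node has a unique successor, so the ``route through a different successor'' manoeuvre is not available in general, and terminating at a limit produces a path with a maximum, which is not a ray.

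Fortunately the step you actually need holds for a much simpler reason, so the proof is repaired in one line. Rays are down-closed, so $t_i < t_{i+1}$ already forces $[t_{i+1}] \subseteq [t_i]$; combined with the standing hypothesis $[t_i] \subsetneq [t_{i+1}]$ this gives $[t_{i+1}] \subseteq [t_i] \subsetneq [t_{i+1}]$, which is absurd --- no ray needs to be constructed. (Equivalently: any witness $z \in [t_{i+1}]\setminus[t_i]$ of strictness is a ray containing $t_{i+1}$ but not $t_i$, and down-closedness of $z$ rules out $t_i \leq t_{i+1}$.) With that substitution, the rest of your argument --- comparability of the $t_i$ from a common ray $x_0 \in [t_0]$, distinctness of the $t_i$ from strictness of the inclusions, and the resulting infinite strictly descending chain inside the well-ordered set $\dc{t_0}$ --- is precisely the paper's intended argument.
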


\begin{prop}
\label{prop_basecptbranch}
Branch spaces are base-complete, and hence Baire. 
\end{prop}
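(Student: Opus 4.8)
The plan is to exhibit a complete base of clopen sets for $\cB(T)$. Recall from the discussion after Lemma~\ref{lem_standardbase} that the collection $\set{[t]}:{t \in T}$ together with the whole space already forms a base for the branch space $\cB(T)$ (unlike for the ray space), since every branch $b$ containing a node $t$ with $\mathring{\dc t}$ a proper initial segment of $b$ can be separated by further descending, and the sets $[t]$ are clopen. So I would take as candidate base
$$\cB^* := \set{[t]}:{t \in T} \cup \Set{\cB(T)},$$
which is a base of clopen sets, and show it is complete: every centered subfamily has non-empty intersection.

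So let $\cD \subseteq \cB^*$ be centered; discarding the copies of $\cB(T)$, we may assume $\cD = \set{[t]}:{t \in S}$ for some $S \subseteq T$ with the property that any finitely many of the $[t]$, $t \in S$, meet. The key observation is that $[t] \cap [t'] \neq \emptyset$ forces $t$ and $t'$ to be comparable in $T$ (a branch containing both contains the interval between them), and then $[t] \cap [t'] = [\max\Set{t,t'}]$. Hence by centeredness $S$ is a chain in $T$: any two of its elements are comparable. Let $P = \dc S = \bigcup_{t \in S}\dc t$ be the down-closure of this chain; then $P$ is a path in $T$. Extend $P$ to a branch $b \supseteq P$ (possible since every path extends to a maximal path by Zorn's lemma). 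Then $S \subseteq P \subseteq b$, so $b \in [t]$ for every $t \in S$, i.e.\ $b \in \bigcap \cD$. This shows $\cB^*$ is complete, so $\cB(T)$ is base-complete, and base-completeness implies the Baire property by the remark in Section~\ref{subsec_baseproperties}.

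The only point requiring slight care — and the main (minor) obstacle — is verifying that $\cB^*$ really is a base for $\cB(T)$, i.e.\ that for a branch $b$ and a subbasic neighbourhood $[t] \cap \bigcap_{s \in F}[s]^\complement \ni b$ one can find a single $t' \in b$ with $b \in [t'] \subseteq [t] \cap \bigcap_{s \in F}[s]^\complement$: since $b \notin [s]$ means $s \notin b$, and $b$ is a branch (inclusion-maximal, hence not a proper initial segment of any longer chain in the relevant sense), the finitely many nodes $s \in F$ are either incomparable to some node of $b$ or lie strictly below where $b$ leaves them, so choosing $t' \in b$ above $t$ and high enough to "pass" every $s \in F$ works; one should double-check the edge case where some $s \in F$ is a limit with $\mathring{\dc s} \subseteq b$ but $s \notin b$, which is exactly why branches, as opposed to arbitrary rays, admit $\set{[t]}:{t\in T}$ as a base. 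This is the content already asserted in the text, so I would simply cite it. Everything else is the short chain/extension argument above.
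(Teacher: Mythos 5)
Your proposal is correct and follows essentially the same route as the paper: both show that the standard base $\set{[t]}:{t \in T}$ is complete by observing that centeredness forces the corresponding nodes to lie on a chain, which extends to a branch lying in the intersection. The only difference is that you additionally spell out why this collection is a base for $\cB(T)$ (including the limit-node edge case), a fact the paper simply asserts earlier in Section~2.2.
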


\begin{proof}
Consider any branch space $\cB(T)$. We show that the standard base $\cS=\set{[t]}:{t \in T}$ is complete.

	Let $\cA \subseteq \cS$ be any centered subcollection. Every $A \in \cA$ is of the form $A=[t_A]$. Since $\cA$ is centered, the set $M=\set{t_A}:{A \in \cA}$ lies on a chain of $T$. Let $x$ be any branch of $T$ with $M \subseteq x$. Then $x \in \bigcap \cA \neq \emptyset$.
\end{proof}

\begin{prop}
\label{prop_basecpt}
Ray spaces are base-complete, and hence Baire.
\end{prop}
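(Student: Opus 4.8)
The statement to prove is that every ray space $\cR(T)$ is base-complete, hence Baire. The obvious candidate for a complete base is the standard base $\cC$ of standard basic opens $[t,F]$, so the plan is to take an arbitrary centered subcollection $\cA \subseteq \cC$ and produce a point in $\bigcap \cA$. Unlike the branch-space case (Proposition~\ref{prop_basecptbranch}), where a centered family of sets $[t]$ immediately yields a chain whose union extends to a branch, here each element of $\cA$ carries a finite ``forbidden'' set $F$ of tops, and a centered family need not fix a single ray: the issue is to show that the $t$-coordinates still line up along a chain and that the forbidden sets do not rule out every ray through that chain.

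**Key steps.** First I would observe that for $[t,F], [t',F'] \in \cC$ with $[t,F] \cap [t',F'] \neq \emptyset$, any ray $y$ in the intersection satisfies $t, t' \in y$, so $t$ and $t'$ are comparable; hence the set $M = \set{t_A}:{A \in \cA}$ lies on a chain $C$ of $T$ whenever $\cA$ is centered. Let $x$ be a down-closed chain extending $C$ (take $x = \bigcup_{t \in M} \dc{t}$, which is already down-closed and totally ordered); if $M$ has a maximum, adjust so that $x$ is a ray rather than a node, which can be arranged since $T$ is pruned along the relevant chain or, more carefully, by passing to a ray with the right tops — this is the delicate bookkeeping point. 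Next, for each $A = [t_A, F_A] \in \cA$ I claim $x \in [t_A, F_A]$: we have $t_A \in M \subseteq x$, so it remains to check that no $s \in F_A$ lies in $x$. Here is where centeredness is used: if some $s \in F_A$ were in $x$, then $s$ would be comparable to all $t_B$, and intersecting $A$ with an element $B$ whose $t_B$ lies above $s$ would be impossible since membership in $[t_B]$ forces containing $s$ (as $s \leq t_B$ and rays are down-closed) while membership in $[t_A,F_A]$ forbids it — contradicting that $\{A, B\}$ has nonempty intersection. One has to handle the case where no such $B$ exists, i.e. where $s$ sits at or above the ``top'' of $M$; this is exactly the scenario the finite forbidden sets are designed for, and it is resolved by choosing $x$ at the outset to avoid the finitely-or-countably many tops arising from $\cA$ — but $\cA$ may be uncountable, so instead one argues directly that such an $s$ would have to be a top of $x$, and one selects $x$ among the (necessarily many, by the tree structure forced by centeredness) rays extending $C$ so as to dodge it.

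**Main obstacle.** The crux — and the step I expect to require the most care — is the construction of the single ray $x$ with $\bigcap \cA \neq \emptyset$: one must simultaneously arrange that $x$ extends the chain $\bigcup M$, that $x$ is a genuine ray (no maximum), and that $x$ avoids every forbidden top appearing in $\cA$. The right way to organize this is to first show that centeredness forces the ``frontier'' of $M$ to be rich enough — either $M$ has no maximum, in which case $x = \bigcup_{t\in M}\dc{t}$ is already a ray and any forbidden $s \in F_A$ comparable to all of $M$ would lie strictly above some $t_B$, giving the contradiction above — or $M$ has a maximum $t^*$, in which case every $A \in \cA$ with $t_A = t^*$ forbids a finite set of tops of the ray-to-be, and since these are tops (height a limit over $\dc{t^*}$... actually over a cofinal sequence), centeredness again forces that for any two such $A, A'$ the forbidden sets $F_A, F_{A'}$ cannot exhaust the successors/tops, so one can pick a ray $x \supsetneq \dc{t^*}$ avoiding all of them; here a compactness-style or direct combinatorial argument on the finite sets $F_A$ is needed. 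I would present this as the main lemma of the proof and then note that base-completeness implies the three Amsterdam properties and hence the Baire property, as recorded in Section~\ref{subsec_baseproperties}.
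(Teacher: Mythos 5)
Your setup is right (the $t$-coordinates of a centered family $\cA\subseteq\cC$ lie on a chain $M$, and a forbidden $s\in F_A$ lying below some $t_B\in M$ kills centeredness via $[t_B,F_B]\subseteq[s]$), but the step you yourself flag as the crux — producing a single ray $x\supseteq M$ that avoids every forbidden top, in particular when $M$ has a maximum $t^*$ or when forbidden limits sit above all of $M$ — is not actually carried out, and the strategy you sketch for it does not work as stated. Observing that the finite sets $F_A,F_{A'}$ cannot pairwise exhaust the tops only controls finitely many members of $\cA$ at a time; $\cA$ may be uncountable, and "picking a ray that dodges all of them" is exactly the conclusion to be proved, not an available move. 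So as written the proposal has a genuine gap at its main step.

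The paper closes this gap with one choice that makes all your case distinctions unnecessary: take $x$ to be an \emph{inclusion-minimal} ray of $T$ with $M\subseteq x$ (this exists: if $M$ has no maximum take $x=\bigcup_{t\in M}\dc{t}$; if $M$ has maximum $t^*$, prune $T$ and append an $\omega$-chain of successors above $t^*$). If some $A=[t_A,F_A]$ failed to contain $x$, there would be $\ell\in F_A\cap x$; since $\ell$ is a limit, $\mathring{\dc{\ell}}$ is a ray properly contained in $x$, so by minimality it cannot contain $M$, forcing some $t_{A'}\in M$ with $t_{A'}\geq\ell$ — and then $[t_{A'},F_{A'}]\subseteq[\ell]$ is disjoint from $A$, contradicting centeredness. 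Note how minimality automatically rules out forbidden limits above $\max M$: the minimal extension of $\dc{t^*}$ consists of successors only, and forbidden nodes are always limits. I would recommend replacing your case analysis by this single minimality argument.
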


\begin{proof}
Consider a ray space $\cR(T)$. We may assume that $T$ is pruned. We show that the standard base $\cC$ from Lemma~\ref{lem_standardbase} is complete.

Let $\cA \subseteq \cC$ be any centered subcollection. Every $A \in \cA$ is of the form $A=[t_A,F_A]$. Since $\cA$ is centered, the set $M=\set{t_A}:{A \in \cA}$ lies on a chain of $T$. 

Let $x$ be an inclusion minimal ray of $T$ with $M \subseteq x$. We claim that $x \in \bigcap \cA$. Otherwise, there is $A=[t_A,F_A] \in \cA$ with $x \notin [t_A,F_A]$. Since $t_A \in M \subseteq x$, there must exist $\ell \in F_A \cap x$. By definition of the standard base, $\ell$ is a limit, so $\mathring{\dc{\ell}}$ is a ray. But $\mathring{\dc{\ell}} \subsetneq x$, so by minimality of $x$ there is $t_{A'} \in M$ with $t_{A'} \geq \ell$. But then $A \cap A' = \emptyset$, contradicting that $\cA$ is centered.
\end{proof}

\begin{prop}
\label{prop_basecpt2}
In any ray space $\cR(T)$, the collection $\cS=\set{[t]}:{t \in T}$ is hereditarily complete.\end{prop}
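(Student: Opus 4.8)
The plan is to show that for every closed subspace $Y \subseteq \cR(T)$, the collection $\cS \restriction Y = \set{[t] \cap Y}:{t \in T}$ is complete, i.e.\ every centered subfamily of $\cS \restriction Y$ has non-empty intersection. We may assume $T$ is pruned. Let $\cA$ be a centered subfamily of $\cS \restriction Y$; so $\cA = \set{[t] \cap Y}:{t \in M}$ for some $M \subseteq T$, and since $\cA$ is centered, in particular every finite subset of $\Set{[t]}:{t \in M}$ meets $Y$. As in the proof of Proposition~\ref{prop_basecpt}, centeredness forces $M$ to lie on a chain of $T$ (if $t,t' \in M$ were incomparable then $[t] \cap [t'] = \emptyset$). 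Now let $x$ be an inclusion-minimal ray of $T$ with $M \subseteq x$ (such a ray exists: take any ray containing the chain $M$ and shrink it). It suffices to prove $x \in Y$, because then $x \in [t] \cap Y$ for every $t \in M$, so $x \in \bigcap \cA$.

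The crux is therefore: \emph{$x$ is a limit of points of $Y$}, i.e.\ $x \in \closure{Y} = Y$. To see this, take any standard basic open neighbourhood $[t,F]$ of $x$, with $t \in x$ and $F$ a finite set of tops of $x$; we must find a point of $Y$ in $[t,F]$. First, $t \in x$ and $x$ is the inclusion-minimal ray through $M$, so the chain $M$ is cofinal in $x$ (otherwise $\mathring{\dc{\ell}}$ for a suitable node would be a smaller ray containing $M$); hence there is $t' \in M$ with $t' \geq t$, giving $[t'] \subseteq [t]$. Second, I claim $[t'] \cap Y \not\subseteq [F]$. Suppose otherwise, so $[t'] \cap Y \subseteq \bigcup_{s \in F}[s]$. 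Each $s \in F$ is a top of $x$, so $\mathring{\dc{s}} = x \supsetneq$ no proper extension of $M$ below $s$ — more precisely, by minimality of $x$, for each $s \in F$ there is $t_s \in M$ with $t_s \not< s$ (else $M \subseteq \mathring{\dc{s}} \subsetneq x$ contradicts minimality), and since $M$ is a chain cofinal in $x = \mathring{\dc{s}}$, we may choose one $t^* \in M$ with $t^* \not< s$ for all $s \in F$ simultaneously; then $[t^*] \cap [s] = \emptyset$ for all $s \in F$, so $[t^*] \cap [t'] \cap Y \subseteq [t'] \cap Y \subseteq [F]$ is disjoint from $[t^*]$, forcing $[t^*] \cap [t'] \cap Y = \emptyset$. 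But $[t^*] \cap [t']$ is one of the basic sets of the centered family (as $t^*, t' \in M$), so it meets $Y$ — a contradiction. Hence there is $y \in [t'] \cap Y$ with $y \notin [F]$, and then $y \in [t'] \setminus [F] \subseteq [t] \setminus [F] = [t,F]$, so $y \in [t,F] \cap Y$ as required.

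The main obstacle is the bookkeeping in the second claim: one must use minimality of $x$ to produce, for the whole finite set $F$ of tops at once, a single node $t^* \in M$ lying on $x$ but not below any $s \in F$, and this relies on $M$ being cofinal in $x$ together with each $s \in F$ satisfying $\mathring{\dc{s}} = x$ (so $s$ is "just above" all of $x$ and $M$ eventually escapes below it in the tree order). Once that node is found, centeredness of $\cA$ delivers the contradiction immediately. Everything else — that $M$ is a chain, that $M$ is cofinal in the minimal $x$, that a minimal ray through $M$ exists — is exactly as in Propositions~\ref{prop_basecptbranch} and~\ref{prop_basecpt}.
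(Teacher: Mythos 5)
Your strategy---fix an inclusion-minimal ray $x$ through the chain $M$ and prove that this particular $x$ lies in $Y$---breaks down at the second claim, and the statement you are aiming for is in fact false. If $s$ is a \emph{top} of $x$, then by definition $\mathring{\dc{s}} = x$, so \emph{every} element of $M \subseteq x$ satisfies $t < s$; there is no $t^* \in M$ with $t^* \not< s$, and the parenthetical justification ``else $M \subseteq \mathring{\dc{s}} \subsetneq x$'' is vacuous because $\mathring{\dc{s}}$ \emph{equals} $x$ rather than being properly contained in it. (You appear to be transplanting the minimality argument from Proposition~\ref{prop_basecpt}, where the offending nodes $\ell \in F_A \cap x$ are limits \emph{on} $x$ with $\mathring{\dc{\ell}} \subsetneq x$, so that minimality really does produce an element of $M$ at or above $\ell$; for tops of $x$ the situation is exactly the opposite.) A concrete counterexample to ``$x \in Y$'': take $T$ as in Example~\ref{example_onepointcpt}, let $x = T^{<\omega}$, let $Y = \Set{b}$ for a single (isolated, hence clopen) branch $b$, and let $\cA = \set{[t] \cap Y}:{t \in x}$. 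This family is centered, $M = x$ is its own minimal ray, and $\bigcap \cA = \Set{b} \neq \emptyset$ --- but $x \notin Y$, and indeed $[t,\Set{s}] \cap Y = \emptyset$ for the top $s$ of $x$ with $b \in [s]$. So completeness must be witnessed by some other point of $\bigcap_{t \in M} [t] \cap Y$, not by the minimal ray. (There is also a smaller untreated case: if $M$ has a maximum it is cofinal in no ray, so ``there is $t' \in M$ with $t' \geq t$'' already fails; that case is trivial, since $\bigcap \cA = [\max M] \cap Y \neq \emptyset$, but it is not covered by your cofinality argument.)

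The paper circumvents precisely this problem by first enlarging $\cA$, via Zorn's lemma, to a \emph{maximal} subcollection of $\cS$ whose trace on $Y$ is centered. Maximality forces $M$ to be a path; if $M$ has a maximum the conclusion is immediate, and if $M$ is a ray $x$, maximality guarantees $[s] \cap Y = \emptyset$ for every top $s$ of $x$ (otherwise $[s]$ could be added to $\cA$, as $[s] \subseteq [t]$ for all $t \in M$). Then any basic neighbourhood $[t,F]$ of $x$ satisfies $[t] \cap Y \neq \emptyset$ and $[F] \cap Y = \emptyset$, hence meets $Y$, giving $x \in \closure{Y} = Y$. Some such maximality (or an equivalent device) is essential; your argument cannot be repaired locally.
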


\begin{proof}
Let $Y \subseteq \cR(T)$ be a closed subspace, and let $\cA \subseteq \cS$ be any subcollection such that  $\cA \restriction Y$ is centered.
	Using Zorn's lemma, 	we may assume that $\cA$ is a maximal such collection. 		
	Every $A \in \cA$ is of the form $A=[t_A]$. Since $\cA$ is centered, the set $M=\set{t_A}:{A \in \cA}$ lies on a chain of $T$, and by maximality of $\cA$, the set $M$ is a path.
	
If the path $M$ has a maximal element $m$, then trivially $\bigcap \cA \cap Y = [m] \cap Y \neq \emptyset$. 

If the path $M$ has no maximal element, then $x = M \in \cR(T)$ is a ray. Clearly, $x \in \bigcap \cA$. It remains to show that $ x \in \overline{Y} = Y$. Towards this end, let $[t,F]$ be a standard basic open neighbourhood of $x$ in $\cR(T)$ (i.e.\ $t \in x$ and all $s \in F$ are tops of $x$ in $T$). We claim that $[t,F] \cap Y \neq \emptyset$.

If $[s] \cap Y \neq \emptyset$ for some $s \in F$, then we could add $[s]$ to $\cA$ without violating that $\cA \restriction Y$ is centered, contradicting the maximality of $\cA$. Hence, $[F] \cap Y = \emptyset$. Since $t \in x = M$, we know $[t] \in \cA$, so $[t] \cap Y \neq \emptyset$. Together with that fact that $[F] \cap Y = \emptyset$ it follows that $[t,F] \cap Y \neq \emptyset$, as desired.
\end{proof}

A topological space $X$ is \emph{monotonically normal} if to each pair $\pair{U, x}$ where $U$ is an open set and $x \in U$, it is possible to assign an open set $U_x$ with $x \in U_x \subseteq U$ such that $U_x \cap V_y \neq \emptyset $ implies $x \in V$ or $y \in U$.

\begin{prop}
\label{prop_monnormal}
Ray spaces are monotonically normal, and hence hereditarily normal.
\end{prop}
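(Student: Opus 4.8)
The plan is to use the standard base $\cC$ of $\cR(T)$ directly to produce the monotone normality operator. Recall that by Lemma~\ref{lem_standardbase} a neighbourhood base at a ray $x$ consists of the sets $[t,F]$ with $t \in x$ and $F$ a finite set of tops of $x$; moreover, every such set is clopen (since $[t]$ and each $[s]$ are clopen). So, given a pair $\pair{U,x}$ with $x \in U$ open, I would first choose a canonical element $[t(x,U), F(x,U)] \subseteq U$ of the standard base containing $x$. The natural canonical choice: among all tops $s$ of $x$ with $[s] \cap (\cR(T) \setminus U) \neq \emptyset$ — of which there can only be finitely many lying below a fixed witness, but in general there could be infinitely many — one cannot simply take $F$ to be all of them. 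Here is where care is needed: I would instead fix, for the pair $\pair{U,x}$, \emph{some} standard basic $[t,F] \subseteq U$ (using the axiom of choice over all such pairs) and set $U_x := [t,F]$. Since $\cR(T)$ has a clopen base, $U_x$ is clopen.

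The second step is to verify the monotone normality condition: if $U_x \cap V_y \neq \emptyset$ then $x \in V$ or $y \in U$. Write $U_x = [t,F]$ and $V_y = [t',F']$ with $t \in x$, $t' \in y$, $F$ a finite set of tops of $x$, $F'$ a finite set of tops of $y$. Pick $z \in U_x \cap V_y$; so $t \in z$ and $t' \in z$, hence $t$ and $t'$ are comparable in $T$, say $t \leq t'$ (the other case being symmetric). The key dichotomy is whether $t' \in x$ or not. If $t' \in x$: then since no $s \in F$ lies in $x$ (the $s$ are tops of $x$, not elements of $x$) and $t' \geq t$, I claim $x \in [t',F']$; the point is that $x$ contains $t'$, and $x$ cannot contain any $s \in F'$ because $s \in F' \subseteq z$'s forbidden tops would force... — here I need the detail that $z \in [t',F']$ together with $t' \in x$, $t' \in z$ forces $x$ and $z$ to agree below $t'$, and the tops in $F'$ (being tops of $y$, with $t' \in y$) sit above $t'$, so if some $s \in F'$ were in $x$ then $s \in z$ too (as $x,z$ agree on $\mathring{\dc{s}} = \mathring{\dc{t'}}$-extensions... this needs the top structure), contradicting $z \in [t',F']$. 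Hence $x \in [t',F'] = V_y \subseteq V$, giving $x \in V$. If $t' \notin x$: then, since $t \in x$ and $t \leq t'$ with $t' \notin x$, there is a top $s^*$ of $x$ with $s^* \leq t'$ (as $x$ ``branches off'' below $t'$); now $s^* \in \mathring{\dc{t'}} \subseteq z$ since $t' \in z$, so $z \in [s^*]$; but if $s^* \in F$ then $z \notin [t,F] = U_x$, contradiction — so $s^* \notin F$, which tells us $[s^*] \cap (\cR(T)\setminus U) = \emptyset$, i.e. $[s^*] \subseteq U$; since $y \in [t',F'] = V_y \ni z$ and $s^* \leq t' \in y$ gives $s^* \in \mathring{\dc{t'}} \subseteq y$, hence $y \in [s^*] \subseteq U$, so $y \in U$. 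Wait — I need $[s^*] \subseteq U$ to follow from $s^* \notin F$, which requires $F$ to have been chosen as \emph{exactly} the set of tops $s$ of $x$ with $[s]\cap(\cR(T)\setminus U)\neq\emptyset$; so in the first step I should instead argue this set is finite (which it is: any top $s$ of $x$ with $[s] \not\subseteq U$ must fail to lie inside the witnessing basic set already chosen for $\pair{U,x}$, and there are only finitely many such by minimality as in Lemma~\ref{lem_standardbase}) and take $F$ to be precisely that set, with $t$ then chosen so that $[t,F]\subseteq U$.

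I would therefore restructure step one as: given $\pair{U,x}$, let $F = F(x,U)$ be the set of all tops $s$ of $x$ with $[s] \cap (\cR(T)\setminus U) \neq \emptyset$; show $F$ is finite (via Lemma~\ref{lem_standardbase}, since a basic $[t_0,F_0]\subseteq U$ around $x$ forces every top of $x$ not below some element of $F_0$ to have $[s]\subseteq U$... actually one argues directly that the tops $s$ with $[s]\not\subseteq U$ all lie below elements of $F_0$, and each element of $F_0$ has only finitely many tops of $x$ below it — in fact at most one, since tops below a fixed node form a chain among $x$'s tops; here I must double-check the ``multiple tops'' subtlety, but distinct tops $s\neq s'$ of $x$ satisfy $\mathring{\dc s}=\mathring{\dc{s'}}=x$ so they are \emph{incomparable}, hence at most one is $\leq$ a given element of $F_0$); then pick $t = t(x,U) \in x$ with $[t,F]\subseteq U$ and set $U_x := [t,F]$. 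The main obstacle is precisely this bookkeeping around multiple tops and making the two symmetric cases of the comparability dichotomy go through cleanly; the topological content is light once the base is in hand. Finally, monotone normality implies hereditary normality by a standard result, giving the ``hence'' clause for free.
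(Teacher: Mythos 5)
Your choice of operator is fine --- in fact the paper's proof shows that \emph{any} assignment of a standard basic open neighbourhood $U_x=[t_x,F_x]\subseteq U$ works, so the canonical choice of $F$ (all tops $s$ of $x$ with $[s]\not\subseteq U$) and the finiteness argument for it, while correct, are unnecessary. The genuine problems are in your verification, in both halves of your dichotomy on whether $t'\in x$. In the case $t'\notin x$ (with $t\le t'$), the top $s^*$ of $x$ with $s^*\le t'$ on which your argument hinges need not exist: it exists only if $x\subseteq\mathring{\dc{t'}}$, and fails as soon as $x$ contains a node incomparable with $t'$ (e.g.\ when $t$ is the root and $x$ and $t'$ lie in different cones above it). In that situation the correct conclusion $y\in U$ follows instead from $t\in\dc{t'}\subseteq y$ together with $F\cap y=\emptyset$ (a top of $x$ lying in $y$ would force $x\subseteq y$ and would then sit below $t'$, which this sub-case excludes). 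In the case $t'\in x$, your argument that no $s'\in F'$ lies in $x$ is broken: from $s'\in F'\cap x$ you cannot conclude $s'\in z$, since $x$ and $z$ may diverge above $t'$. Indeed $x\in[t',F']$ can genuinely fail here --- exactly when some top of $y$ lies in $x$, i.e.\ $y\subsetneq x$ --- and then one must prove $y\in U$ instead, which you do not do (it does hold: $t\in y$, and no top of $x$ can lie in the proper initial segment $y$, so $y\in[t,F]\subseteq U$).

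The paper avoids all of this by verifying the contrapositive: assume $x\notin V$ and $y\notin U$ and show $[t_x,F_x]\cap[t_y,F_y]=\emptyset$, splitting on whether the \emph{rays} $x$ and $y$ are comparable rather than on the positions of $t$ and $t'$. If $x\subsetneq y$, then $y\notin U$ forces some $s\in F_x\cap y$, while $x\notin V$ forces $x\subseteq\mathring{\dc{t_y}}$, whence $s\le t_y$ and $[t_y,F_y]\subseteq[s]\subseteq[F_x]$ is disjoint from $[t_x,F_x]$. If $x$ and $y$ are incomparable, no top of either ray can lie on the other, so $y\notin[t_x]$ and $x\notin[t_y]$, making $t_x$ and $t_y$ incomparable and the two basic sets disjoint. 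You should either adopt this case split or patch both of your cases with the missing ``$y\in U$'' arguments above; as written the proof does not go through.
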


\begin{proof}
Given any ray $x \in \cR(T)$ and open $U \ni x$, simply let $U_x := [t_x,F_x]$ be a standard basic open neighbourhood with $x \in [t_x,F_x] \subseteq U$. We show that these $U_x$ are as required.

So consider pairs $x \in U$ and $y \in V$ with $x \notin V$ and $y \notin U$.

Case 1: The rays $x$ and $y$ are comparable, say $x \subsetneq y$. Since $y \notin U \supseteq [t_x,F_x]$, there must be $s \in F_x$ with $s \in y$. Moreover, since $x \notin V \supseteq [t_y,F_y]$, we must have $x \subseteq \dc{t_y}$. Therefore, $s \leq t_y$, and so $[t_y] \subseteq [F_x]$, implying $[t_x,F_x] \cap [t_y,F_y] = \emptyset$. 

Case 2: The rays $x$ and $y$ are incomparable. Since $y \notin U \supseteq [t_x,F_x]$ and $y \notin [F_x]$, we must have $y \notin [t_x]$. Similarly, $x \notin [t_y]$. Hence, $t_x$ and $t_y$ are incomparable, so $[t_x,F_x] \cap [t_y,F_y]  \subseteq [t_x] \cap [t_y] = \emptyset$.
\end{proof}

\subsection{Compactness properties}
\label{sec26}

It is well-known and easy to show that all branch spaces are hereditarily ultraparacompact \cite{nyikos1999some}. Much more difficult is that also all end spaces hereditarily ultraparacompact \cite{kurkofka2021approximating}, and hence also all ray spaces of special trees by Theorem~\ref{thm_kp_rep}.
If $T$ has an uncountable chain, then we get a copy of $\omega_1+1$ in $\cR(T)$, so are no longer hereditarily paracompact, as witnessed by the subspace $\omega_1$. But we can still show that all ray spaces are ultraparacompact, irrespectively of whether they contain uncountable branches or not.

\begin{prop}
\label{prop_ultraparacompactness}
Ray spaces are ultraparacompact: Every open cover can be refined to an open partition consisting of standard basic open sets.
\end{prop}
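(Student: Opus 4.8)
The plan is to exploit the nestedness and noetherianness of the standard subbase $\cS = \set{[t]}:{t \in T}$, which by Lemma~\ref{lem_standardbase} refines to the standard base $\cC$ of $\cR(T)$. Given an open cover $\cU$ of $\cR(T)$, I first replace it by a refinement consisting of standard basic open sets: for each $x \in \cR(T)$ pick $U_x \in \cU$ with $x \in U_x$ and then a standard basic open $[t_x,F_x]$ with $x \in [t_x,F_x] \subseteq U_x$. So without loss of generality $\cU = \set{[t_i,F_i]}:{i \in I}$ consists of standard basic opens, and it suffices to extract from this a subfamily that partitions $\cR(T)$ and refines $\cU$.

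The key step is to use noetherianness to choose each member of the partition to be $\subseteq$-maximal among standard basic opens contained in some member of $\cU$. Concretely: call a standard basic open $[t,F]$ \emph{good} if $[t,F] \subseteq [t_i,F_i]$ for some $i$. By Lemma~\ref{lem_increasing2} (local bases $\cC(x)$ are noetherian) together with the fact that $\cS$ itself is noetherian (Proposition~\ref{prop_increasing}), every increasing chain of good sets through a fixed point stabilises; more carefully, one argues that every good $[t,F]$ is contained in a $\subseteq$-maximal good set — here one needs that a $\subseteq$-increasing sequence $[t_0,F_0] \subseteq [t_1,F_1] \subseteq \cdots$ of standard basic opens, not necessarily sharing a base point, is eventually constant, which follows by the same argument as in Lemma~\ref{lem_increasing2}: enlarging forces either shrinking the finite top-set $F$ (finitely often) or strictly decreasing the base point $t$ in the well-founded order $(T,\leq)$. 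Let $\cP$ be the collection of all $\subseteq$-maximal good standard basic opens.

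It remains to check that $\cP$ is a partition refining $\cU$. Refinement and covering are immediate: each $P \in \cP$ is good, hence inside some member of $\cU$, and each $x \in \cR(T)$ lies in the good set $[t_x,F_x]$, hence in a maximal one above it. For disjointness, suppose $P, P' \in \cP$ with $P \cap P' \neq \emptyset$. Since $\cC$ is nested (being generated from the nested subbase $\cS$ — intersecting standard basic opens are comparable, as one sees from the forms $[t]\setminus[F]$, using that $\cS$ is nested), we get $P \subseteq P'$ or $P' \subseteq P$; maximality of both then forces $P = P'$. The main obstacle I anticipate is precisely the maximality argument: verifying that the family of good standard basic opens containing a given one has a $\subseteq$-maximal element, which is where noetherianness (rather than Zorn's lemma, since ascending chains genuinely stabilise here) does the work, and where one must be slightly careful that "standard basic open" is preserved when passing to the base point's predecessors along a branch of $x$.
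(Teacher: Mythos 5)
Your argument breaks down at the disjointness step: the standard base $\cC$ is \emph{not} nested, even though the subbase $\cS=\set{[t]}:{t\in T}$ is. Two standard basic opens $[t,F]$ and $[t,F']$ with the same base point $t$ but different finite sets $F\neq F'$ of tops of the same ray $x$ both contain $x$, yet neither contains the other (each contains rays above the tops that only the other excludes). Concretely, in Example~\ref{example_onepointcpt} take the cover $\cU=\set{[r,\Set{s}]}:{s\in T^\omega}\cup\set{[s]}:{s\in T^\omega}$ with $r$ the root: then $[r,\Set{s}]$ and $[r,\Set{s'}]$ for $s\neq s'$ are both $\subseteq$-maximal among standard basic opens refining $\cU$, they meet in the ray $T^{<\omega}$, but they are distinct. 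So your collection $\cP$ of all maximal good sets is a cover refining $\cU$ but not a partition, and the sentence deriving nestedness of $\cC$ from nestedness of $\cS$ is where the proof fails. (The maximality claim itself is fine --- the noetherian argument of Lemma~\ref{lem_increasing2} does extend to increasing chains of standard basic opens not sharing a base point --- but it does not rescue disjointness.)

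The paper circumvents exactly this problem by not taking \emph{all} maximal good sets, but selecting them one at a time in a transfinite recursion: at each stage it picks an inclusionwise \emph{maximal} ray $x_i$ not yet covered and adds one standard basic open neighbourhood $[t_i,F_i]$ of $x_i$, and it verifies by hand (using the up-closedness invariant (3) and the fact that $t_i<t_j$ for every previously chosen $[t_j,F_j]$ meeting $[t_i,F_i]$) that each newly added set is either disjoint from or contains every previously chosen set, so that the accumulated collection stays nested. Only for a nested collection does ``take the $\subseteq$-maximal elements'' yield a partition. To repair your proof you would need some analogous mechanism --- an ordering of the choices that forces the chosen standard basic opens to form a nested family --- rather than a single global maximality selection.
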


\begin{proof}
Let $\cC$ be the standard base for some ray space $\cR(T)$, and let $\cU$ be an open cover of $\cR(T)$. For ultraparacompactness, we need to find an open partition $\cB$ refining $\cU$. 
By transfinite recursion of length at most $\tau = |T|^+$, we will define nested collections $\cB_i \subset \cC$ (for $i < \tau$) 
such that
\begin{enumerate}
	\item $\cB_i \subseteq \cB_j$ whenever $i \leq j < \tau$,
    \item $\cB_i$ refines $\cU$ for all $i < \tau$,
    \item $\cB_i $ is up-closed in the sense if $x, x' \in \cR(T)$ with $x \subseteq x'$ and $x \in \bigcup \cB_i$, then $x' \in \bigcup \cB_i$, too.
\end{enumerate}
We let $\cB_0 = \emptyset$. If the collection $\cB_i$ refining $\cU$ is already defined and it does not yet cover $\cR(T)$, let $x_i \in \cR(T) \setminus \bigcup \cB_i$ be any inclusionwise maximal ray with this property (i.e.\ every ray properly extending $x_i$ is covered by $\cB_i$). This is possible as $\bigcup \cB_i$ is open. Choose $U_i \in \cU$ with $x_i \in U$ and let $[t_i,F_i]$ be any standard basic open neighbourhood of $x \in [t_i,F_i] \subset U$ in $\cR(T)$ (i.e.\ where $F_i$ is a set of tops of $x$), and put $\cB_{i+1} = \cB_i \cup \Set{[t_i,F_i]}$. Clearly, $\cB_{i+1}$ refines $\cU$.
To see that $\cB_{i+1}$ is still nested, consider any element $[t_j,F_j] \in \cB_i$ such that $[t_j,F_j] \cap [t_i,F_i] \neq \emptyset$. 
Then $t_i$ and $t_j$ are comparable. By property (1) and the fact that we deal with standard basic opens, we must have $t_i \leq t_j$ and $t_j \notin x$. In particular, whenever $[t_i,F_i]$ intersects a previously chosen $[t_j,F_j]$, then 
\[
t_i < t_j \quad \text{for all } j < i. \label{eq:gettingsmaller} \tag{$\star$}
\]
 Now either $t_j \in \uc{F_i}$, but then $[t_j,F_j] \cap [t_i,F_i] =\emptyset$, or otherwise $[t_j,F_j] \subseteq [t_i,F_i]$.
At limit ordinals $\ell$, we define $\cB_{\ell} = \bigcup \set{\cB_i}:{i < \ell}$, which will continue to be nested by (1) and the fact that nestedness is a finitary property. 
This definition also satisfies (2) and (3). This completes the recursive construction.

Since all $t_i$ are distinct, this recursion terminates after at most $\tau = |T|^+$ steps. Let $\cB_{\tau} = \bigcup \set{\cB_i}:{i < \tau}$ be the final nested collection. Property $(\star)$ implies that every $\subseteq$-increasing chain in $\cB_\tau$ is finite. Let $\cB \subseteq \cB_\tau$ be the collection of $\subseteq$ maximal elements. Since $\cB_\tau$ was a cover of $\cR(T)$ and every element of $\cB_\tau$ is contained in a maximal element, also $\cB$ is a cover. And since $\cB_\tau$ was nested, it is clear that any two maximal elements of $\cB_\tau$ are disjoint or equal. Thus, $\cB$ is an open partition of $\cR(T)$. Since $\cB \subseteq \cB_\tau$ and $ \cB_\tau$ refines $\cU$, it follows that $\cR(T)$ is ultraparacompact.
\end{proof}

The \emph{Lindel\"of number} of a space is the smallest cardinal $\kappa$ such that every open cover of the space $X$
 has a subcover of size $< \kappa$. So a space has Lindel\"of number $\aleph_0$ if and only if it compact, and has Lindel\"of number $\aleph_1$ if and only if it Lindel\"of in the usual sense (every open cover has a countable subcover).

\begin{prop}
Let $\kappa$ be a regular cardinal.
A ray space $\cR(T)$ of a pruned tree $T$ has Lindel\"of number $\kappa$ if and only if every node of $T$ has $<\kappa$ successors. 
\end{prop}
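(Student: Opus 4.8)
The plan is to prove both directions by analysing how successors in $T$ translate into open covers of $\cR(T)$, using the standard base $\cC$ throughout.

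For the \emph{backwards implication}, assume every node of $T$ has $<\kappa$ successors, and let $\cU$ be any open cover of $\cR(T)$; I want a subcover of size $<\kappa$. By Proposition~\ref{prop_ultraparacompactness} it is enough to bound the size of an open partition into standard basic opens refining $\cU$, so I may assume $\cU$ itself is such a partition $\set{[t_i,F_i]}:{i \in I}$. The key observation is that the map sending the partition-cell containing a node $t \in T$ (viewed as a path with maximum $t$) to $t$ itself organises $I$ along the tree: since the cells are pairwise disjoint and each is $\subseteq$-up-closed-ish via its structure, the ``roots'' $t_i$ form an antichain-like family, and each node of $T$ can serve as the $t_i$ of at most one cell; moreover distinct cells split at nodes of $T$, so the number of cells meeting $\lfloor t\rfloor$ for a fixed $t$ is controlled by branching. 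Carrying out a transfinite counting argument levelwise — at each node one loses a factor of ``number of successors'' $<\kappa$, and $\kappa$ is regular — yields $|I| \le |T^{<\mathrm{ht}(T)}|$ worth of branching, but more carefully one shows $|I|<\kappa$ whenever every node has $<\kappa$ successors by a straightforward induction on an enumeration of a ``skeleton'' of the partition. I would phrase this via: the set $D$ of nodes $t\in T$ such that $[t]$ is not contained in a single cell of $\cU$ is a subtree in which every node has $<\kappa$ successors and every branch is eventually outside $D$ (by local finiteness of the base description in Lemma~\ref{lem_standardbase}), hence $|D|<\kappa$ by the regularity of $\kappa$, and each cell of $\cU$ is ``anchored'' at a node of $D\cup\mathrm{succ}(D)$, giving $|\cU|<\kappa$.

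For the \emph{forwards implication}, suppose some node $t_0$ has $\ge\kappa$ successors. Since $T$ is pruned, for each successor $s$ of $t_0$ pick a ray (or a branch containing a ray) $x_s$ with $s\in x_s$; the sets $[s]$ for $s\in\mathrm{succ}(t_0)$ are pairwise disjoint nonempty clopen sets, $\ge\kappa$ many of them. Now consider the open cover of $\cR(T)$ consisting of all the $[s]$ for $s\in\mathrm{succ}(t_0)$ together with $[t_0]^\complement$; any subcover must include all but possibly finitely many of the $[s]$ — in fact, a ray lying in $[s]$ belongs to no other member — so $\ge\kappa$ of them, hence no subcover of size $<\kappa$ exists. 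This shows the Lindel\"of number is $\ge\kappa$. Combined with the backward direction (which shows it is $\le\kappa$ when \emph{all} branching is $<\kappa$, equivalently $\ge\kappa$ is \emph{forced} exactly when some branching reaches $\kappa$), and using regularity of $\kappa$ to match the two bounds, we conclude the Lindel\"of number equals $\kappa$.

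The main obstacle I expect is the counting in the backwards direction: one must show that an open partition of $\cR(T)$ into standard basic opens cannot have more than ``$\kappa$-bounded-branching-many'' cells, and the subtlety is that cells can be ``tall'' (have small roots $t_i$ deep below many others) and that rays can have infinitely many tops, so one cannot naively index cells by a single level of $T$. The right fix is to track the \emph{division tree} $D$ above: prove it is a subtree, that along each branch one eventually enters a single cell (this is exactly where Lemma~\ref{lem_standardbase} and property $(\star)$-style reasoning from Proposition~\ref{prop_ultraparacompactness} enter), and then bound $|D|$ by a transfinite induction using regularity of $\kappa$ at limit levels. Everything else is routine.
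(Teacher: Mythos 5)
Your forwards implication is correct and is exactly the paper's argument: the cover $\{[t_0]^\complement\}\cup\{[s]:s\in\operatorname{succ}(t_0)\}$ is an open partition into nonempty sets (using that $T$ is pruned), so it has no subcover of size $<\kappa$.

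The backwards implication, however, has a genuine gap at its central counting step. Your ``division tree'' $D=\{t\in T: [t] \text{ is not contained in a single cell of } \cU\}$ is indeed a down-closed subtree with $<\kappa$ branching, and (since $T$ is pruned, so every branch of $T$ is a ray with no tops) every branch of $T$ does eventually leave $D$ --- but these facts do \emph{not} imply $|D|<\kappa$, and the claim is simply false. Take Example~\ref{example_onepointcpt} with $\kappa=\aleph_0$: the ray space is compact, and for the two-cell partition $\{[r,\{s\}],[s]\}$ (where $r$ is the root and $s$ is a single top of the ray $x=T^{<\omega}$) one has $D=T^{<\omega}$, which is infinite, even though $|\cU|=2$. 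In general a rooted subtree with $<\kappa$ branching all of whose branches exit it can have far more than $\kappa$ nodes (regularity of $\kappa$ gives you nothing here without a bound on heights), and since $|D|$ can vastly exceed $|\cU|$, any bound of the shape $|\cU|\le|D\cup\operatorname{succ}(D)|$ cannot yield $|\cU|<\kappa$. The paper avoids counting any subtree of ``undecided'' nodes altogether: assuming for contradiction that the partition $\{[t_i,F_i]\}_{i\in I}$ has $|I|\ge\kappa$, it sets $M=\{t_i:i\in I\}$ (these are distinct by disjointness) and $S=\{t\in T:|\lfloor t\rfloor\cap M|\ge\kappa\}$; regularity of $\kappa$ together with $<\kappa$ branching makes $S$ a nonempty, rooted, \emph{pruned} subtree, so a branch $x$ of $S$ is a ray of $T$; the cell $[t_*,F_*]$ containing $x$ then forces, by disjointness and finiteness of $F_*$, that $\kappa$-many elements of $M$ lie above a single $s\in F_*$, whence $s\in S$ strictly extends $x$ inside $S$ --- contradicting maximality of $x$. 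If you want to salvage your approach, you would have to replace $D$ by something like this set $S$, i.e.\ count only nodes that see $\kappa$-many cell-roots above them, rather than all nodes at which the partition is still undecided.
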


Observe that no conditions on the number of tops of a ray in $T$ are required, cf.~Example~\ref{example_onepointcpt}.

\begin{proof}
If some node $t$ has at least $\kappa$ many successors, then 
$$\big\{[t]^\complement\big\} \cup \big\{[s] \colon s \text{ a successor of } t \big\}$$ is an open partition of $\cR(T)$ without $<\kappa$ sized subcover.

Now assume that conversely, every node of $T$ has $<\kappa$ many successors. Suppose further for a contradiction that there is an open cover $\cU$ without $<\kappa$ sized subcover. By Proposition~\ref{prop_ultraparacompactness} we may assume that $\cU = \set{[t_i,F_i]}:{i \in I}$
is an open partition consisting of non-empty standard basic opens, for some index set $I$ with $|I| \geq \kappa$. Note that disjointness implies that $t_i \neq t_j$ for $i \neq j \in I$. Let $M = \set{t_i}:{i \in I}$.

Let $S = \set{t \in T}:{|\uc{t} \cap M| \geq \kappa } \subseteq T$. Clearly, $S$ is down-closed, and non-empty, and by the condition that every node in $T$ has $<\kappa$ successors and $\kappa$ is regular, we get that $S$ is a rooted, pruned subtree of $T$.

Now let $x$ be a branch of $S$; since $S$ is pruned, $x \in \cR(T)$. Let $[t_*,F_*]$ be the element of $\cU$ containing $x$. 
Since $t^* \in x \subseteq S$, there exist $\kappa$ elements from $M$ above $t^*$. By disjointness of $\script{U}$, all of these $\kappa$ elements belong to $\uc{F^*}$. But since $F^*$ is finite, there are $\kappa$ many elements above some $s \in F^*$, so $s \in S$. But then $x \subsetneq \dc{s} \subseteq S$, contradicting the maximality of $x$.
\end{proof}

\begin{cor}
A ray space $\cR(T)$ of a pruned tree $T$ is compact if and only if every node of $T$ has only finitely many successors
\hfill \qed
\end{cor}

\begin{cor}
A ray space $\cR(T)$ of an uncountable pruned tree $T$ such that every node has only countably many successors is never metrizable.
\end{cor}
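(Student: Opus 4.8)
The plan is to derive a contradiction from the assumption that $\cR(T)$ is metrizable, for $T$ an uncountable pruned tree all of whose nodes have only countably many successors. First I would invoke the previous proposition with $\kappa = \aleph_1$: since every node has $< \aleph_1$ successors, $\cR(T)$ has Lindel\"of number $\aleph_1$, i.e.\ $\cR(T)$ is Lindel\"of. A metrizable Lindel\"of space is second countable (a metrizable space is Lindel\"of iff it is second countable iff it is separable), so it would suffice to show that $\cR(T)$ cannot be second countable when $T$ is uncountable and pruned; equivalently, I would derive that under these hypotheses $|T|$ must be countable, contradicting uncountability.

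The key step is to extract from a countable base a bound on the size of $T$. Recall the dense embedding $T \subseteq \cP(T)$, with $\cR(T)$ the boundary; but it is cleaner to work directly inside $\cR(T)$. Since $T$ is pruned, for every node $t \in T$ there is a ray $x \in \cR(T)$ with $t \in x$, and the clopen set $[t]$ is a nonempty member of the standard subbase $\cS$. The plan is to show the assignment $t \mapsto [t]$ is "almost injective" in a way that is incompatible with a countable base: for each node $t$ pick a successor-set and note that the sets $[s]$, for $s$ ranging over $\operatorname{succ}(t)$, together with $[t]^\complement$, form a partition of $\cR(T)$ refining nothing, so each $[s]$ is a nonempty clopen set and these are pairwise disjoint across all $s$. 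If $\cR(T)$ had a countable base $\cB$, then for each nonempty clopen $[s]$ we could pick a nonempty $B_s \in \cB$ with $B_s \subseteq [s]$; pairwise disjointness of the $[s]$ forces the $B_s$ to be pairwise distinct, so there are only countably many successor-nodes in total. Combined with the fact that the root has countably many successors and, inductively, that a tree in which every node has countably many successors and only countably many "successor-nodes" appear overall is itself countable — more carefully, I would argue that $T = \dc{r} \cup \bigcup_{s \in \operatorname{succ}\text{-nodes}} \lfloor s \rfloor$ is not quite the right decomposition, so instead I would count levels: the set of all successor-nodes being countable, together with pruned-ness bounding limit nodes, yields that each level $T^i$ is countable and that there are only countably many nonempty levels, hence $|T| \leq \aleph_0$.

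The main obstacle, and the step requiring genuine care, is the passage from "countably many clopen sets of the form $[s]$ with $s$ a successor" to "$T$ is countable": limit nodes are not captured by this counting, and a tree can be uncountable purely through its limit levels (e.g.\ long chains). This is precisely why I would route through the Lindel\"of property rather than counting successors naively: Lindel\"ofness already rules out, via the partition argument above, any node having uncountably many successors, but it does *not* by itself bound the height. So the honest approach is: assume $\cR(T)$ metrizable, deduce it is second countable; then observe that an uncountable pruned tree always contains either a node with uncountably many successors (excluded by hypothesis) or an uncountable chain or uncountably many branches — in either of the latter cases one exhibits an uncountable discrete or non-second-countable subspace of $\cR(T)$ (an uncountable set of pairwise disjoint nonempty clopen sets $[s]$, or a copy of $\omega_1 + 1$ as noted before Proposition~\ref{prop_ultraparacompactness}), contradicting second countability. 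I expect the cleanest writeup to isolate the dichotomy "$T$ uncountable and finitely-or-countably branching $\Rightarrow$ $T$ has an uncountable antichain of successor-nodes" via a straightforward transfinite cardinality computation, and then conclude from the resulting uncountable disjoint clopen family that $\cR(T)$ is not second countable, hence (being Lindel\"of by the preceding proposition) not metrizable.
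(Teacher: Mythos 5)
Your opening reduction is exactly the paper's: by the preceding proposition with $\kappa=\aleph_1$ the space $\cR(T)$ is Lindel\"of, and Lindel\"of plus metrizable implies second countable, so it suffices to show that $\cR(T)$ is not second countable. Two of your cases are also sound: an uncountable antichain (for instance an uncountable level, or a node with uncountably many successors) gives an uncountable pairwise disjoint family of nonempty clopen sets $[s]$, hence uncountable cellularity; and an uncountable chain gives a copy of $\omega_1+1$, which is not first countable. The first of these is precisely the paper's first case.

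The genuine gap is the remaining case: $T$ uncountable with all levels and all chains countable, i.e.\ an Aronszajn tree. Your proposed treatment, ``uncountably many branches $\Rightarrow$ an uncountable set of pairwise disjoint nonempty clopen sets $[s]$'', amounts to producing an uncountable antichain, and a Suslin tree has none; likewise your fallback dichotomy ``$T$ uncountable and countably branching $\Rightarrow$ $T$ has an uncountable antichain of successor nodes'' is false (already for the chain $\omega_1$, and, consistently, for Suslin trees even after the chain case is split off). Since Suslin trees consistently exist, your argument does not close in ZFC. The statement is still true in this case, but it needs a different idea. The paper's proof handles it as follows: if all levels are countable then $T$ has height at least $\omega_1$; a putative countable base may be taken to consist of standard basic opens $[t_n,F_n]$, all of whose nodes have height below some $\alpha<\omega_1$, and such a family cannot separate a ray of order type $\alpha+\omega+\omega$ from its truncation at height $\alpha+\omega$. (Alternatively: if all chains of $T$ are countable, $\cR(T)$ is not separable, since a countable dense set $D$ of rays would have to meet every nonempty clopen set $[t]$, forcing the countable union of countable chains $\bigcup D$ to cover $T$.) Either argument closes the Aronszajn/Suslin case that your write-up leaves open.
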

\begin{proof}
Lindel\"of plus metrizable implies second countable, but the ray space of an uncountable pruned tree $T$ has uncountable weight: To see this, first observe that if $T$ has an uncountable level $T^\alpha$, then $\set{[t]}:{t \in T^\alpha}$ witnesses that $\cR(T)$ has uncountable cellularity, so uncountable weight.

Otherwise, all levels are countable, so $T$ has uncountable height. Consider $T' = T^{<\omega_1}$. If $\cR(T')$ has countable weight, there would be a countable basis $\cC$  of $\cR(T')$ consisting of standard basic open sets $\set{[t_n,F_n]}:{n \in \NN}$. Let $M = \set{t_n}:{n \in \NN} \cup \bigcup \set{F_n}:{n \in \NN}$. 
Since $M$ is countable,  let $\alpha < \omega_1$ be the supremum of heights of nodes of $M$, and choose a ray $x \in\cR(T')$ of order type $\alpha+\omega+\omega$. Let $x' = x \cap  T^{<\alpha + \omega}$. Then $\cC$ doesn't separate $x$ from $x'$, a contradiction.
\end{proof}

\subsection{Well-behaved dense subspaces in trees without uncountable branches}

Recall that a collection $\Pi$ of nonempty open sets of a space $X$ is a \emph{$\pi$-base} if for every nonempty open set $U \subseteq X$ there is $P \in \Pi$ with $P \subseteq U$.

\begin{prop}
\label{prop_pibase}
The collection $\Pi$ consisting of $\cS=\{[t] \colon t \in T\}$ together with all isolated points forms a $\pi$-base in any ray space $\cR(T)$.
\end{prop}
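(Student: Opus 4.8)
The claim is that $\Pi = \{[t] : t \in T\} \cup \{\{x\} : x \text{ isolated in } \cR(T)\}$ is a $\pi$-base. Since we are allowed to assume $T$ is pruned (as usual, replacing $T$ by its pruning does not change the ray space), the plan is to take an arbitrary nonempty open $U \subseteq \cR(T)$, pick a ray $y \in U$, and use Lemma~\ref{lem_standardbase} to find a standard basic open neighbourhood $[t, F] \subseteq U$ of $y$, where $t \in y$ and $F$ is a finite set of tops of $y$. If $y$ happens to be isolated, then $\{y\} \in \Pi$ and $\{y\} \subseteq U$, and we are done. So assume $y$ is not isolated.

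The core of the argument is to produce a single node $s \in T$ with $[s] \subseteq [t,F]$. The natural candidate: since $y$ is a ray with finitely many tops $F = \{\ell_1, \dots, \ell_k\}$ (all limits, all having $y = \mathring{\dc{\ell_i}}$ as their set of strict predecessors), and $y$ is \emph{not} isolated, I want to find $s \in \lfloor t \rfloor$ above $t$ that lies "off to the side" of all the tops in $F$ and of $y$ itself. Concretely: because $y$ is non-isolated, the basic set $[t,F]$ is infinite, so it contains a ray $z \neq y$ with $t \in z$ and $z \notin [F]$; such a $z$ must split away from $y$ at some node, or extend beyond a point where $y$ stops. Either way, choose $s \in z$ with $t \le s$ such that $s \notin \dc{\ell_i}$ for every $i \le k$ and $s \notin y$ (the last condition is automatic once $s$ is chosen high enough along $z$ past the point where $z$ leaves $y$, or $s$ is simply a successor of a top of $y$ other than those in $F$). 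Then for any ray $w \in [s]$ we have $t \le s \in w$, so $t \in w$, and $w \not\subseteq \dc{\ell_i}$ (since $s \in w$ and $s \not\le \ell_i$), hence $\ell_i \notin w$ for all $i$, giving $w \notin [F]$; thus $[s] \subseteq [t,F] \subseteq U$, and $[s] \in \Pi$ is the desired member of the $\pi$-base.

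The main obstacle is the case analysis in choosing $s$: one must handle separately the situation where $y$ is a ray that admits a proper extension (a successor node of some top of $y$ not in $F$, or, if $y$ has a top $m \notin F$, the node $m$ itself works since $[m] \subseteq [t]$ and $m \not\le \ell_i$) versus the situation where the non-isolation of $y$ is witnessed by rays that split off below the tops. A clean way to unify these: non-isolation of $y$ means $[t,F] \setminus \{y\} \neq \emptyset$ for every such neighbourhood, so fix $z \in [t,F]$, $z \neq y$. If $z \supsetneq y$, then $z$ contains some top $m$ of $y$; since $z \notin [F]$ we have $m \notin F$, and $s := m$ works. If $z \not\supseteq y$, let $s$ be the $\le$-minimal element of $z \setminus y$; then $s$ is a successor of some node of $y \cap z \subseteq y$, and since $s \notin y$ while each $\ell_i$ is a \emph{limit} with $\mathring{\dc{\ell_i}} = y$, we cannot have $s \le \ell_i$ (that would force $s \in \mathring{\dc{\ell_i}} = y$), so again $[s] \subseteq [t,F]$. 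This two-case split, together with the bookkeeping that $s \ge t$ in both cases, is all that is needed; everything else is routine.
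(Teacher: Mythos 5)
There is a genuine gap, and it is fatal to the structure of your argument rather than just a missing routine case. After fixing a non-isolated $y$ and a neighbourhood $[t,F]\subseteq U$, you pick $z\in[t,F]\setminus\{y\}$ and split into $z\supsetneq y$ and $z\not\supseteq y$; but in the second case you immediately take ``the $\le$-minimal element of $z\setminus y$'', which presupposes $z\setminus y\neq\emptyset$. If $z\subsetneq y$ --- and a ray can perfectly well be a proper initial segment of another ray once order types exceed $\omega$ --- then $z\setminus y=\emptyset$ and no such $s$ exists. Worse, in exactly this situation your goal of producing \emph{any} nonempty $[s]\subseteq[t,F]$ can be unattainable. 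Concretely, let $T$ be a single chain of order type $\omega^2+\omega$ (which is pruned), let $y=T^{<\omega^2}$, let $\ell$ be the node at height $\omega^2$ (the unique top of $y$) and let $t$ be the root. Then $[t,\{\ell\}]=\{T^{<\omega\cdot n}:1\le n\le\omega\}$ is a neighbourhood of the non-isolated ray $y$, every other point of it is a proper subray of $y$, and every nonempty $[s]$ contains the unique branch $T\ni\ell$ and hence is not contained in $[t,\{\ell\}]$. So the implication ``$y$ non-isolated $\Rightarrow$ some $[s]$ fits inside its neighbourhood'' is simply false.

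The paper's proof avoids this by dichotomizing differently: either $U$ contains a branch, in which case $\{[s]:s\in T\}$ is a neighbourhood base at that branch and some $[s]\subseteq U$ follows at once; or $U$ contains no branch, in which case one exhibits an \emph{isolated point} of $U$ --- not the ray you started with, but an inclusion-minimal ray through $t$ contained in it (in the example above, $T^{<\omega}$). Your cases $z\supsetneq y$ and $z$ incomparable with $y$ are handled correctly (modulo the harmless inaccuracy that $\min(z\setminus y)$ need not be a successor), but the case $z\subsetneq y$ forces one to abandon the search for a set $[s]$ and locate an isolated point other than $y$ instead; that is exactly the idea your proposal is missing.
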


\begin{proof}
We need to show that every non-empty standard open set $U = [t,F] \in \cC(x)$ includes an element from $\Pi$. If $U$ contains a branch of $T$, then pick such a branch $b \in U \cap \cB(T)$, and note that since $\cS$ forms a base at $b$, there is $t \in T$ with $[t] \subseteq U$, giving us the desired element of the $\pi$-base included in $U$. 

	To complete the proof, we show that if $U \cap \cB(T) = \emptyset$, then $U$ contains an isolated point. 
Let $y \subseteq x$ be an inclusion-minimal ray with $t \in y$. We claim that $y$ is isolated in $\cR(T)$.

Case 1: $y < x$. Let $s$ be the unique top of $y$ in $x$. Then $[t,\Set{s}]$ witnesses that $y$ is isolated: for any other  ray $z$ in $[t,\Set{s}]$ would extend to a branch $z' \in \cB(T)$ with $z' \in [t,\Set{s}] \subseteq [t,F] \subseteq U$, a contradiction.

Case 2: $y = x$. Then $[t,F]$ witnesses that $y=x$ is isolated: for any other  ray $z$ in $[t,F]$ would extend to a branch $z' \in \cB(T)$ with $z' \in [t,F] \subseteq U$, a contradiction.
\end{proof}

\begin{cor}
	\label{cor_1stctble}
The points of first countability form a dense subspace of any ray space of a tree without uncountable branches.
\end{cor}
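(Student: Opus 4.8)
The plan is to exhibit, in any ray space $\cR(T)$ of a tree $T$ without uncountable branches, a dense set of points each of which has a countable neighbourhood base. The natural candidates are: all isolated points (which are trivially first countable), together with all rays $x$ that are countable chains — for such an $x$, a cofinal $\omega$-sequence $(t_n)$ in $x$ exists, and I would show that the countably many standard basic opens $[t_n, F]$ with $F$ ranging over finite sets of tops of $x$ form a local base at $x$. The point is that since $x$ has order type $<\omega_1$ — in fact, under our hypothesis every branch, hence every ray, has countable order type, so \emph{every} ray is a countable chain — this already shows every ray is a point of first countability, \emph{provided} each ray has only countably many tops. The subtlety, highlighted in Examples~\ref{example_onepointcpt} and \ref{example_onepointcptres}, is that a ray may have uncountably many tops, so the naive local base $\{[t_n,F] : n \in \NN,\ F \text{ finite set of tops}\}$ is uncountable.

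So the heart of the argument is: \emph{the points of first countability are dense}, not that every point is first countable. Here I would invoke Proposition~\ref{prop_pibase}: the collection $\Pi$ consisting of $\cS = \{[t] : t \in T\}$ together with all isolated points is a $\pi$-base. It therefore suffices to show that every nonempty member of $\Pi$ contains a point of first countability. Isolated points are themselves first countable, so I only need to handle sets of the form $[t]$. Fix $t \in T$; since $T$ has no uncountable branch, pick a branch $b$ through $t$, so $b \in [t]$ and $b$ has countable order type. A branch is a \emph{maximal} path, hence has no tops at all (a top $s$ of $b$ would satisfy $\mathring{\dc{s}} = b$, making $b \cup \{s\}$ a strictly larger path). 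Consequently, by Lemma~\ref{lem_standardbase} a local base at $b$ is given by the sets $[s,\emptyset] = [s]$ with $s \in b$; choosing a cofinal $\omega$-sequence in $b$ yields a countable local base at $b$. Thus $b$ is a point of first countability contained in $[t]$.

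Putting this together: every member of the $\pi$-base $\Pi$ contains a point of first countability, so the set $D$ of points of first countability meets every nonempty open set, i.e.\ $D$ is dense in $\cR(T)$. That is the claim. I expect no serious obstacle here — the two ingredients (the $\pi$-base of Proposition~\ref{prop_pibase}, and the observation that branches have no tops so $\cS$ is a base at every branch of a tree of height $\le\omega_1$) are both already essentially in hand; the only thing to be careful about is the distinction between "$X$ has a dense set of first-countable points" (what is claimed) and "$X$ is first countable" (false in general, e.g.\ Example~\ref{example_onepointcpt}), and the fact that branches of a tree without uncountable branches automatically have countable cofinality since they are countable well-ordered chains.
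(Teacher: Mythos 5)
Your overall strategy is exactly the paper's: combine the $\pi$-base of Proposition~\ref{prop_pibase} with the observation that a branch $b$, having no tops, has $\set{[s]}:{s \in b}$ as a local base, which is countable when $T$ has no uncountable branch. There is, however, one step that fails as written, namely ``pick a branch $b$ through $t$, so $b \in [t]$''. A branch is only a maximal \emph{path}: if every maximal path through $t$ has a maximum (i.e.\ ends in a leaf), then no branch through $t$ is a ray, so none of them lies in $[t] \subseteq \cR(T)$, even though $[t]$ may well be nonempty. (For instance, if $T$ is a single chain of order type $\omega+1$, the unique branch is all of $T$, which has the leaf at level $\omega$ as its maximum and hence is not a ray, yet $[0] = \Set{T^{<\omega}}$ is nonempty.) So your claim that every nonempty member of $\Pi$ of the form $[t]$ contains a branch is false in general.

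The statement you actually need --- every nonempty $[t]$ contains a first-countable point --- is still true, and the missing case is already covered by the proof of Proposition~\ref{prop_pibase}: a nonempty standard open set that is disjoint from $\cB(T)$ contains an isolated point. This is precisely the dichotomy the paper invokes (``every element of the $\pi$-base contains a branch or an isolated point''). So you should either add the isolated-point alternative for those $[t]$ with $[t] \cap \cB(T) = \emptyset$, or reduce at the outset to pruned $T$ (where every branch is a ray); with that one repair your proof coincides with the paper's. The rest of your argument --- branches have no tops, so Lemma~\ref{lem_standardbase} gives the local base $\set{[s]}:{s \in b}$, countable by the hypothesis on branch lengths --- is correct, as is your cautionary remark that rays with uncountably many tops need not be first countable.
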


\begin{proof}
If we assume $T$ has no uncountable branches, then every $x \in \cB(T)$ is first-countable in $\cR(T)$, for $\{[t] \colon t \in x\}$ is a countable neighbourhood base for $x$. Now since every element of the $\pi$-base from Proposition~\ref{prop_pibase} contains a branch or an isolated point, the density result follows.
\end{proof}

\begin{prop}
\label{prop_densemetr}
Every ray space of a special tree contains a dense, completely 
metrizable subspace. 
\end{prop}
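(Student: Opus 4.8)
The plan is to realise the dense completely metrizable subspace as the set of points singled out by the branches of a tree of standard basic clopen sets. We may assume $T$ is pruned, and fix a partition $T=\bigsqcup_{n\in\NN}A_n$ into antichains; in particular $T$ has no uncountable branch. Grouping the subbase $\cS=\{[t]:t\in T\}$ along the $A_n$ and adjoining the singletons of all isolated points, Proposition~\ref{prop_pibase} yields a $\sigma$-disjoint clopen $\pi$-base $\Pi$ of $\cR(T)$. Density of the subspace we build will come from $\Pi$; metrizability from the fact that the clopen sets on each level of the tree are pairwise disjoint; and complete metrizability from the base-completeness of $\cC$ (Proposition~\ref{prop_basecpt}).

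\textbf{Construction.} I would recursively choose a downward-closed forest $W\subseteq\NN^{<\omega}$ and nonempty members $V_\sigma\in\cC$ ($\sigma\in W$), starting from $V_\emptyset=\cR(T)=[r]$ for the root $r$, such that: \emph{(1)} for each $\sigma\in W$, the family $\{V_{\sigma^\frown i}:\sigma^\frown i\in W\}$ is a maximal disjoint subfamily of those $C\in\cC$ with $C\subseteq V_\sigma$ which are contained in some member of $\Pi$; and \emph{(2)} writing $V_{\sigma^\frown i}=[t_{\sigma^\frown i},F_{\sigma^\frown i}]$ in standard form, the nodes $t_\sigma$ strictly increase along each branch of $W$, and the finite top-sets $F_\sigma$ are organised so that along every branch $\gamma\in[W]$ the chain $C_\gamma:=\bigcup_n\dc{t_{\gamma\restriction n}}$ is a \emph{branch} of $T$ with $\bigcap_n V_{\gamma\restriction n}=\{C_\gamma\}$ (when $C_\gamma$ is a branch it has no top, so here one may even take all $F_\sigma=\emptyset$). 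I would then set $Z:=\{C_\gamma:\gamma\in[W]\}\cup\{p:p\text{ isolated in }\cR(T)\}$.

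\textbf{Verification, assuming such a scheme exists.} Any two $V_\tau,V_{\tau'}$ on the same level of $W$ are disjoint (being nested only under a common ancestor and hence only if equal), so each family $\{V_\tau\cap Z:\tau\in W,\ |\tau|=n\}$ is discrete in $Z$; together with the singletons of the isolated points and using that $\{V_{\gamma\restriction n}\cap Z\}$ is a neighbourhood base at $C_\gamma$ (by (2), since $\{[s]:s\in C_\gamma\}$ is already a neighbourhood base at the branch $C_\gamma$ by Lemma~\ref{lem_standardbase}), this gives a $\sigma$-discrete clopen base of $Z$, so $Z$ is ultrametrizable by de~Groot's theorem. Along each branch the sets $V_{\gamma\restriction n}$ form a decreasing, hence centred, subfamily of the complete base $\cC$, so $\bigcap_n V_{\gamma\restriction n}\ne\emptyset$ automatically; this is exactly what is needed to see that the metrizable space $Z$ is in fact completely metrizable (the nonempty player wins the strong Choquet game by always shrinking inside $\cC$; cf.\ \cite{completeness}). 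Finally $Z$ is dense: given a nonempty open $U$, by Proposition~\ref{prop_pibase} either $U$ contains an isolated point (which lies in $Z$), or $U\supseteq[t]$ for some $t\in T$; in the latter case, since each level of $W$ covers a dense subset of the previous one (by maximality in (1) and $\Pi$ being a $\pi$-base), one can pick $\gamma\in[W]$ with $[t]\cap V_{\gamma\restriction n}\ne\emptyset$ for all $n$, and then $C_\gamma\in[t]$—for otherwise the clopen set $[t]^\complement$ would be a neighbourhood of $C_\gamma$ containing some $V_{\gamma\restriction n}$, contradicting the choice of $\gamma$.

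\textbf{Main obstacle.} The delicate clause is (2): arranging the refinement so that the defining nodes $t_\sigma$ climb \emph{cofinally through genuine branches of $T$}, so that each $C_\gamma$ has no top and $\bigcap_n V_{\gamma\restriction n}$ collapses to the single point $C_\gamma$, while the refining families remain maximal disjoint inside members of $\Pi$ (which is what forces density). In spaces like the one-point compactification of an uncountable discrete set (Example~\ref{example_onepointcpt}), which has no branch of order type $\omega$, this compels the $t_\sigma$ to jump past limit levels, and coordinating these jumps with the maximality requirement—without creating branches $\gamma$ whose $C_\gamma$ is a non-branch ray with too many tops to be pinned down by the countable set $\bigcup_n F_{\gamma\restriction n}$—is the crux of the bookkeeping. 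I expect the hereditary completeness of the subbase $\cS$ (Proposition~\ref{prop_basecpt2}), which controls centred families after restricting to closed sets, to be exactly the tool that makes this coordination possible uniformly; the hereditary ultraparacompactness available for special $T$ (used implicitly above to handle subspaces, cf.\ the discussion preceding Proposition~\ref{prop_ultraparacompactness}) should then let one pass from maximal disjoint families to honest partitions where convenient.
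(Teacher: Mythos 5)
There is a genuine gap, and you have located it yourself: clause~(2) of your construction --- arranging that along every branch $\gamma$ of $W$ the nodes $t_{\gamma\restriction n}$ climb cofinally through an actual branch of $T$, so that $\bigcap_n V_{\gamma\restriction n}$ collapses to a single point --- is the entire content of the proposition, and it is left unproven. Your ``main obstacle'' paragraph correctly identifies why it is hard (in Example~\ref{example_onepointcpt} a decreasing chain of sets $[t_n]$ with $t_n\in T^{<\omega}$ has intersection equal to the whole space, not a point), but the tools you propose to resolve it (hereditary completeness of $\cS$, hereditary ultraparacompactness) do not by themselves produce the required coordination. The missing idea is the one structural consequence of speciality that the paper actually uses: every special tree is \emph{semi-special}, i.e.\ admits a sequence of maximal antichains $A_1\leq A_2\leq\cdots$ (each node of $A_n$ has a node of $A_{n+1}$ above it) whose union is cofinal in $T$. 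This is precisely what forces a centered chain $[t_1]\supseteq[t_2]\supseteq\cdots$ with $t_n\in A_{k_n}$ to converge to a genuine branch of $T$, and it cannot be recovered from an arbitrary partition of $T$ into antichains, which is all your setup records.

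Given that ingredient, the paper's argument also shows that your recursive tree-of-clopen-sets scheme is unnecessary: one simply sets $X_n=\bigcup_{t\in A_n}[t]$, adds the isolated points not in $X_n$ to form an open dense set $X'_n$ partitioned by $B'_n=\{[t]:t\in A_n\}\cup\{\{p\}:p\in I_n\}$, intersects over $n$ using Baireness (Proposition~\ref{prop_basecpt}) to get a dense subspace $X$ with the $\sigma$-discrete base $\bigcup_n B'_n\restriction X$, and then verifies base-completeness of that base directly, the condition $A_n\leq A_m$ guaranteeing that the branch determined by a centered chain meets every $A_n$ and hence lies in $X$. Two further points in your verification would need repair even if (2) were secured: a maximal disjoint family of clopen sets is discrete at a point only if that point is either covered by the family or isolated, and your single family of \emph{all} isolated singletons need not be discrete in $Z$ (a branch $C_\gamma\in Z$ may be a limit of isolated points); the paper avoids this by distributing the isolated points over the levels as the sets $I_n$, so that each $B'_n$ is an honest partition of $X'_n$.
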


A similar result, that every branch spaces of special tree contains a dense 
metrizable subspace, occurs in Todorcevic's \cite[Theorem~4.2]{todorcevic1981stationary}. Indeed, if $T$ is special, then our $\pi$-base from Proposition~\ref{prop_pibase} will be $\sigma$-disjoint, and a first-countable Hausdorff space has a dense metrizable subspaces if and only if it has a $\sigma$-disjoint $\pi$-base \cite{white1978first}. Our construction requires an additional step, as we aim for completeness as well.

\begin{proof}
For two subsets $V,W \subseteq T$ let us write $V \leq W$ is $V \subseteq \dc{W}_T$, i.e.\ every node in $V$ has a node in $W$ above it.
The conclusion of the Proposition will hold for all trees $T$ for which there is an \emph{(increasing) cofinal antichain} sequence of (maximal) antichains $\set{A_n}:{n \in \NN}$ in $T$ satisfying $A_1 \leq A_2 \leq A_3 \leq \ldots$ and such that for every $t$ there is $a \in \bigcup_{n \in \NN} A_n$ with $t \leq a$. Trees with such an antichain sequence are called \emph{semi-special} in \cite{funk2005branch}. It is routine to show that every special tree is semi-special in this sense.

Now given an increasing cofinal sequence $(A_n)_{n \in \NN}$ consisting of maximal antichains, for every $n \in \NN$ let
\begin{itemize}
    \item $B_n = \set{[t]}:{t \in A_n}$
    \item $X_n = \bigcup B_n \subseteq \cR(T)$
    \item $I_n = \set{x \text{ isolated in } \cR(T)}:{x \notin X_n}$
    \item $X'_n = X_n \cup I_n$
    \item $B'_n = B_n \cup \set{\Set{x}}:{x \in I_n}$.
\end{itemize}
Then $B'_n$ is an open partition of the set $X'_n$, and $X'_n$ is open in $\cR(T)$. Let us see that $X'_n$ is also dense. Let $U \subset \cR(T)$ be non-empty and open. 
If $U$ contains a branch of $T$, then there also is a node $s \in T$ such that $[s] \subset U$. If $\dc{s}$ meets $A_n$, then $[s] \subset B_n$ and $X_n \cap U \neq \emptyset$. Otherwise, it follows from maximality of $A_n$ that there is $t \in A_n \cap [s]$. But then $[t] \subseteq [s]$ for some $t \in A_n$ witnesses $X_n \cap U \neq \emptyset$. 
Otherwise, $U$ avoids $\cB(T)$ and hence contains an isolated point by Proposition~\ref{prop_pibase}, so if $U \cap X_n = \emptyset$, then $U \cap I_n \neq \emptyset$.

Since ray spaces are Baire, Proposition~\ref{prop_basecpt}, it follows that $X = \bigcap_{n \in \NN} X'_n$ is a dense subspace of $\cR(T)$. Moreover, every $B'_n$ induces an open partition of $X$, and $B=\bigcup_{n \in \NN} B'_n$ forms a base for $X$. 
Hence, $B$ is a $\sigma$-discrete base for $X$, so $X$ is metrizable 
\cite[Theorem~4.4.8]{engelking1989book}. 

Finally, to see that $X$ is completely metrizable, we check that $X$ is base-complete as witnessed by $B$. Indeed, let $\cC \subseteq B$ be an infinite centered subcollection. Then $\cC$ contains at most one element from each $B'_n$. If one of them is an isolated point, we are done. Otherwise, each $C \in \cC$ is of the form $[t_C]$ for some $t_C \in T$ and by centeredness, all elements in $\set{t_C}:{C \in \cC}$ are pairwise comparable, so lie on a branch $x$ of $T$. It remains to show that $x \in X$. Otherwise, there is some antichain $A_n$ avoiding $x$. Since $\cC$ is infinite, there must be $m > n$ such that some $t_C \in A_m$. Since $A_n \leq A_m$, however, the above observation implies that there must by an element of $A_n$ below $t_C$ on $x$, a contradiction.
Thus, $x \in X$, and so $X$ is metrizable and base-complete, hence completely metrizable \cite{de1963subcompactness}.
\end{proof}

\section{Characterisations of path-, ray- and branch spaces}

\subsection{Base properties II}
\label{sec_baseII}

Let $(X,\tau)$ be a topological space and $\cA$ a collection of subsets of $X$. 
\begin{itemize}
\item $\cA$ is a \emph{clopen base} for $X$ if all elements of $\cA$ are clopen, and $\cA$ is a base for $X$.
\item $\cA^\complement := \set{A^\complement}:{A \in \cA}$,
		
	\item $\cA$ is a \emph{clopen subbase} for $(X,\tau)$ if $\tau$ is the smallest topology on $X$ containing all elements of $\cA$ as clopen subsets. Equivalently, $\cA$ is a \emph{clopen subbase} if 
	$\cA \cup \cA^\complement$ is an open subbase for $X$. 
	\item $\cA$ is \emph{disjoint} if all elements of $\cA$ are pairwise disjoint.
	\item $\cA$ is \emph{discrete} if for every $x \in X$ there is an open neighbourhood of $x$ that intersects at most one element of $\cA$.
	\item $\cA$ is \emph{locally finite} if for every $x \in X$ there is an open neighbourhood of $x$ that intersects only finitely many elements of $\cA$.
	\item $\cA$ is \emph{$\sigma$-disjoint} / \emph{$\sigma$-discrete} / \emph{$\sigma$-locally finite} if $\cA $ is a countable union of disjoint / discrete / locally finite subcollections.
		\end{itemize}

\subsection{The metrizable case} Since end-, branch-, and ray spaces are base-complete and ultraparacompact (Sections~\ref{sec25} and \ref{sec26}), these classes of spaces -- when metrizable -- describe exactly the class of completely ultrametrizable spaces, property (5) below, which 
can be characterized topologically by the existence of certain well-behaved bases.

\begin{thm} \label{thm_charmetr} The following are equivalent for a Hausdorff space $X$:
	\begin{enumerate}
		\item $X$ is homeomorphic to $\Omega(T) = \cR(T) = \cB(T)$ for a (pruned)  graph-theoretic tree $T$.
		\item $X$ is homeomorphic to a metrizable end space of a graph.
		\item $X$ is homeomorphic to a metrizable ray space.
		\item $X$ is homeomorphic to a metrizable branch space.
		\item $X$ is an ultraparacompact, subcompact, metrizable space.
		\item $X$ is completely ultrametrizable.
		\item $X$ admits a nested, noetherian base that is complete and $\sigma$-discrete.
		\item $X$ admits a nested base that is complete and $\sigma$-discrete.
		\item $X$ admits a clopen base that is complete and $\sigma$-locally finite. 
				\end{enumerate}
\end{thm}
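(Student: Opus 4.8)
The plan is to prove Theorem~\ref{thm_charmetr} as a cycle of implications, leaning on the structural results already established for ray and branch spaces and on classical metrization theory. The backbone will be: $(1)\Rightarrow(2),(3),(4)$ trivially (a pruned graph-theoretic tree has $\Omega(T)=\cR(T)=\cB(T)$ by the first Example, and these are end spaces by Theorem~\ref{thm_kp_rep}); $(2)\Rightarrow(5)$ using that end spaces are ultraparacompact (Proposition~\ref{prop_ultraparacompactness} via Theorem~\ref{thm_kp_rep}) and subcompact (they are base-complete by Proposition~\ref{prop_basecpt}, and base-complete implies subcompact as remarked in Section~\ref{sec25}); similarly $(3)\Rightarrow(5)$ and $(4)\Rightarrow(5)$ directly from Propositions~\ref{prop_ultraparacompactness}, \ref{prop_basecptbranch}, \ref{prop_basecpt}. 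Then $(5)\Rightarrow(6)$: an ultraparacompact metrizable space has a $\sigma$-discrete base of clopen sets (ultraparacompactness lets one refine each member of a $\sigma$-discrete base to a clopen partition), so by de~Groot's ultrametrization theorem $X$ is ultrametrizable; subcompactness of a metrizable space is equivalent to complete metrizability \cite{de1963subcompactness}, so $X$ carries a complete compatible ultrametric. Then $(6)\Rightarrow(7)$: from a complete compatible ultrametric $d$, take for each $n$ the partition of $X$ into $d$-balls of radius $2^{-n}$; the union over $n$ of these partitions is a base that is nested (two ultrametric balls are disjoint or nested), noetherian (an increasing chain of such balls has radii bounded below, hence stabilizes since balls of a fixed radius are clopen and distinct ones disjoint — actually one must argue via the radii: an increasing chain uses only finitely many radius values $2^{-n}$ with $n$ below the minimum, and within each radius the balls are pairwise disjoint so the chain is constant on that radius), complete (a centered subfamily is a chain of balls whose radii tend to $0$ or stabilize; completeness of $d$ gives a common point — this is where completeness of the metric is used), and $\sigma$-discrete (balls of radius $2^{-n}$ form a discrete family). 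The implications $(7)\Rightarrow(8)$ and $(7)\Rightarrow(9)$ are immediate ($\sigma$-discrete $\Rightarrow$ $\sigma$-locally finite; nested noetherian clopen family — note a nested complete base of clopen sets: clopenness here needs a short argument, or one simply observes the balls are clopen).

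The substantive return arcs are $(8)\Rightarrow(1)$ and $(9)\Rightarrow(1)$ (equivalently $(8),(9)\Rightarrow$ one of $(2)$--$(6)$, then close the loop through $(6)\Rightarrow(7)\Rightarrow(1)$). For $(9)\Rightarrow(6)$: a Hausdorff space with a $\sigma$-locally finite clopen base is regular with a $\sigma$-locally finite base, hence metrizable (Nagata--Smirnov), and zero-dimensional metrizable, hence ultrametrizable by de~Groot; completeness of the base then yields complete metrizability as in the $(5)\Rightarrow(6)$ step (a complete clopen base gives subcompactness, and subcompact metrizable $=$ completely metrizable). For $(8)\Rightarrow(6)$: a nested base consists (after noting nestedness) of clopen sets once we check each basic $B$ is closed — indeed $X\setminus B = \bigcup\{B'\in\cA : B'\cap B=\emptyset\}$ is open since for any $x\notin B$ a basic neighbourhood $B'\ni x$ inside $X\setminus B$ satisfies $B'\cap B=\emptyset$ by nestedness — so $(8)$ reduces to $(9)$ ($\sigma$-discrete $\Rightarrow$ $\sigma$-locally finite). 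Finally $(6)\Rightarrow(1)$: given a complete compatible ultrametric, build the graph-theoretic tree $T$ whose nodes at level $n$ are the nonempty balls of radius $2^{-n}$ (ordered by reverse inclusion), prune it, and check $\cR(T)=\cB(T)$ is homeomorphic to $X$ — a branch of $T$ is a decreasing sequence of balls, which by completeness has a unique limit point, giving a bijection $\cB(T)\to X$ that is a homeomorphism because $[B]$ maps to $B$ and these form a base on both sides; since the tree has height $\le\omega$ and is pruned, $\Omega(T)=\cR(T)=\cB(T)$.

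I expect the main obstacle to be the careful bookkeeping in $(6)\Rightarrow(7)$ and $(6)\Rightarrow(1)$ around the \emph{noetherian} and \emph{complete} conditions: one must be precise that an increasing chain of metric balls stabilizes (this fails if one allowed balls of arbitrarily large radius in a single chain, but an increasing chain $B_0\subseteq B_1\subseteq\cdots$ has nondecreasing radii bounded above by the radius containing $B_0$'s centre at the relevant scale, and only finitely many scales $2^{-n}$ are $\ge$ that bound), and that a centered (equivalently, by nestedness, linearly ordered by inclusion) subfamily of balls has nonempty intersection exactly because the metric is complete — if the radii do not tend to $0$ the chain is eventually constant and the intersection is that constant ball; if they do tend to $0$, pick a point from each and use the Cauchy/completeness argument. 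A secondary subtlety is making sure the zero-dimensionality needed for de~Groot's theorem is available in $(5)$: an ultraparacompact space is zero-dimensional, which together with metrizability and $\sigma$-discreteness of some base gives a $\sigma$-discrete \emph{clopen} base by partitioning. None of these steps is deep, but the theorem has nine clauses, so the proof will mostly be the discipline of choosing an economical cycle — I would route it as $(1)\Rightarrow(2)\Rightarrow(5)\Rightarrow(6)\Rightarrow(7)\Rightarrow(8)\Rightarrow(9)\Rightarrow(6)$ (closing a sub-loop to get all of $(6)$--$(9)$ equivalent), with separate short arrows $(1)\Rightarrow(3)$, $(1)\Rightarrow(4)$, $(3)\Rightarrow(5)$, $(4)\Rightarrow(5)$, and finally $(6)\Rightarrow(1)$ to complete the main cycle.
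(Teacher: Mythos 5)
Your proposal is correct and follows essentially the same route as the paper: the metric clauses (1)--(6) are handled by the trivial implications $(1)\Rightarrow(2),(3),(4)$, then $(2),(3),(4)\Rightarrow(5)$ via Propositions~\ref{prop_ultraparacompactness} and \ref{prop_basecpt}, then $(5)\Rightarrow(6)$ via de~Groot's subcompactness theorem plus strong zero-dimensionality, and $(6)\Rightarrow(1)$ by the folklore tree-of-balls construction; the base clauses are attached by producing the level/ball base for (7) and closing with de~Groot's $(9)\Rightarrow(6)$. The only cosmetic difference is that you derive (7) directly from a complete ultrametric, whereas the paper reads it off the levels $T^n$ of the tree from (1); these are interchangeable.

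Two local justifications need repair, though neither affects the architecture. First, your argument that members of a nested base are closed is circular: you invoke ``a basic neighbourhood $B'\ni x$ inside $X\setminus B$'', which presupposes that $X\setminus B$ is a neighbourhood of $x$. The fact is true, but the proof must use Hausdorffness: for $x\notin B$ pick $y\in B$, separate $x,y$ by disjoint open $U,V$, choose basic $B_y\ni y$ with $B_y\subseteq V\cap B$ and basic $B_x\ni x$ with $B_x\subseteq U$; if $B_x\cap B\neq\emptyset$ then nestedness forces $B\subseteq B_x$ (as $x\in B_x\setminus B$), whence $B_y\subseteq B_x$, contradicting $B_x\cap B_y=\emptyset$. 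Second, in $(9)\Rightarrow(6)$ the step ``zero-dimensional metrizable, hence ultrametrizable by de~Groot'' is false as stated: Roy's space is metrizable with a clopen base ($\operatorname{ind}=0$) but not strongly zero-dimensional, hence not ultrametrizable. You must use the full hypothesis that the clopen base is $\sigma$-locally finite (disjointify each locally finite clopen subfamily to get a $\sigma$-discrete clopen base, then apply de~Groot), or simply cite de~Groot's theorem for the whole implication $(9)\Rightarrow(6)$ as the paper does.
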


\begin{proof}
	We first establish the metric equivalences from (1)--(6). Trivially, (1) implies (2), (3) and (4). By Propositions~\ref{prop_ultraparacompactness} and \ref{prop_basecpt2}, any of (2), (3) and (4) implies (5). Now every base-complete metrizable space is completely metrizable \cite{de1963subcompactness}, and any completely metrizable, ultraparacompact space is stongly zero-dimensional, and hence completely ultrametrizable by \cite[Corollary~5]{lemin2003ultrametrization}.
		Finally, that every completely ultrametrizable space can be respresented as end space of a graph-theoretic tree is folklore, see e.g.\ \cite{hughes2004trees}, giving (6) $\Rightarrow$ (1).

		Now for the topological equivalences: To see that (1) implies (7), note that for a graph theoretic tree $T$ with levels $T^n$ ($n \in \NN$), the collection $\bigcup_{n \in \NN} \set{[t]}:{t \in T^n}$ forms a base as required in (7). The implications (7) through (9) are trivial. 
		Finally, (9) $\Rightarrow$ (6) is due to de Groot \cite{de1956non}.
\end{proof}

\subsection{Order trees from a nested, noetherian subbase}

As a preparation for our topological characterisations of arbitrary (non-metrizable) ray- and branch spaces, we investigate certain families of open sets that form a tree under reverse inclusion. 

 	\begin{defn}
 	Let $\cS \subset 2^X$ be a collection of subsets of a space $X$.
	An \emph{$\cS$-tree} $(T,f)$ is a tree $T$ together with a surjection $f \colon T \to \cS$ such that 
		\begin{enumerate}
\item[(F1)] if $t \leq t'$, then $f(t) \supseteq f(t')$,  and
\item[(F2)] if $t, t' \in T$ are incomparable, then $f(t) \cap f(t') = \emptyset$.
\end{enumerate}
\end{defn}

\begin{lemma}
\label{lem_treeassociated}
	Let $\cS \subset 2^X$ be a nested, noetherian collection of subsets of $X$ such that $X \in \cS$ but $\emptyset \notin \cS$. Then ordering $\cS$ by reverse inclusion gives rise to a rooted tree $(T_\cS, \leq) = (\cS,\supseteq)$, which is special if and only if $\cS$ is $\sigma$-disjoint.
\end{lemma}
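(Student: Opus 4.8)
The first task is to verify the tree axioms. We order $\cS$ by reverse inclusion, so $S \leq S'$ means $S \supseteq S'$, and the root is $X$ since $X \in \cS$ contains every member of $\cS$. I would first check that the set of predecessors of any $S \in \cS$ is well-ordered: predecessors of $S$ are exactly the members of $\cS$ containing $S$, and since $\cS$ is nested any two of them are $\subseteq$-comparable, so the predecessors form a chain under $\supseteq$; this chain is well-founded (hence well-ordered) precisely because $\cS$ is noetherian, as a strictly $\supseteq$-descending sequence of predecessors would be a strictly $\subseteq$-increasing sequence in $\cS$. The unique minimal element is $X$. This establishes that $(\cS,\supseteq)$ is a rooted order tree.

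\textbf{The ``special'' equivalence.} For the forward direction, suppose $(\cS,\supseteq)$ is special, so $\cS = \bigcup_{n \in \NN} \cA_n$ with each $\cA_n$ an antichain in $(\cS, \supseteq)$. An antichain in $(\cS,\supseteq)$ consists of pairwise $\subseteq$-incomparable members of $\cS$; since $\cS$ is nested, pairwise $\subseteq$-incomparable means pairwise disjoint (if $S \cap S' \neq \emptyset$ then $S \subseteq S'$ or $S' \subseteq S$). Hence each $\cA_n$ is a disjoint subcollection, so $\cS$ is $\sigma$-disjoint. Conversely, if $\cS = \bigcup_{n} \cA_n$ with each $\cA_n$ disjoint, then by the same nestedness observation the elements of $\cA_n$ are pairwise $\subseteq$-incomparable, hence $\cA_n$ is an antichain in $(\cS,\supseteq)$, and $\cS$ is a countable union of antichains, i.e.\ special.

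\textbf{Main obstacle.} The argument is essentially bookkeeping; the only genuine point requiring care is the dictionary between the order-theoretic notions on $(\cS,\supseteq)$ and the set-theoretic notions on $\cS$ — in particular that, under the nestedness hypothesis, ``antichain'' and ``pairwise disjoint'' coincide, and that noetherian-ness of $\cS$ as a set system translates to well-foundedness of the predecessor chains. I expect no serious difficulty beyond stating these correspondences cleanly; the hypotheses $X \in \cS$ and $\emptyset \notin \cS$ are used only to guarantee a root exists and that no member is the empty set (so that the tree is genuinely indexed by nonempty open sets, matching later usage).
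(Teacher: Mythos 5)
Your proof is correct and follows essentially the same route as the paper: well-foundedness of $(\cS,\supseteq)$ from the noetherian hypothesis, the chain condition on predecessor sets from nestedness (two sets containing a common nonempty $S$ intersect, hence are comparable), and the observation that under nestedness with $\emptyset \notin \cS$ antichains of $(\cS,\supseteq)$ are exactly the pairwise disjoint subfamilies. The only nuance worth making explicit is that the comparability of two predecessors of $S$ uses $S \neq \emptyset$ (i.e.\ $\emptyset \notin \cS$), not just nestedness, but this is a triviality and the argument is sound.
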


\begin{proof}	
Since $\cS$ is noetherian, it follows that $(\cS,\supseteq)$ is a well-founded partial order, with unique minimal element $X$. So to see that $T_\cS$ is a tree, it remains to show that $\dc{S}$ is a chain for all $S \in T_\cS$. So consider $S',S''$ in $\cS$ with $S',S'' \supseteq S \neq \emptyset$. Since $S',S''$ intersect, it follows from nestedness that one of them is contained in the other.

In fact, this argument shows that incomparable elements in $T_\cS$ are in fact disjoint, showing that $T_\cS$ is special if and only if $\cS$ is $\sigma$-discrete.
\end{proof}

It will be convenient to think of the tree $T_\cS$ as an abstract tree $(T_\cS,\leq)$ together with a bijection $f \colon T_\cS \to \cS$ recording which sets of $\cS$ correspond to which nodes of the tree.

\begin{lemma}
\label{lem_assmap}
Let $X$ be a Hausdorff space and $\cS$ a clopen subbase of a space $X$. Then for every $\cS$-tree $(T,f)$, 
 the function $e \colon X \to \cP(T)$ defined by
$$x\mapsto e(x):= \set{t \in T}:{x \in f(t)}$$
 is an embedding of $X$ into the path space $\cP(T)$.
\end{lemma}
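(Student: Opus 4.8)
The plan is to verify the three defining properties of an embedding in turn: that $e$ is well-defined as a map into $\cP(T)$, that it is injective, and that it is a homeomorphism onto its image, the last point being handled by checking that $e$ is continuous and open onto $e(X)$.

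\medskip

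\noindent\textbf{Well-definedness.} First I would check that $e(x)$ is indeed a path of $T$, i.e.\ a down-closed chain. Down-closedness is immediate from (F1): if $t' \in e(x)$ and $t \leq t'$, then $f(t) \supseteq f(t')\ni x$, so $t \in e(x)$. That $e(x)$ is a chain follows from (F2): if $t,t' \in e(x)$ were incomparable, then $x \in f(t) \cap f(t') = \emptyset$, a contradiction. Hence $e(x) \in \cP(T)$.

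\medskip

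\noindent\textbf{Injectivity.} Suppose $x \neq y$ in $X$. Since $X$ is Hausdorff and $\cS \cup \cS^\complement$ is a subbase, there is a subbasic set separating them; as $\cS$ consists of clopen sets, this means either some $S \in \cS$ contains exactly one of $x,y$ — giving $t \in T$ with $x \in f(t) \not\ni y$ or vice versa, so $e(x) \neq e(y)$ — or no single element of $\cS$ separates them, in which case they are separated by a \emph{complement} $S^\complement$, which again means some $S \in \cS$ contains exactly one of them, and we conclude as before. Either way $e(x) \neq e(y)$, so $e$ is injective.

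\medskip

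\noindent\textbf{Homeomorphism onto the image.} For continuity it suffices to pull back the subbasic clopen sets $[t]$ and $[t]^\complement$ of $\cP(T)\subseteq 2^T$: by construction $e^{-1}([t]) = \set{x}:{t \in e(x)} = f(t)$, which is open (indeed clopen) in $X$, and likewise $e^{-1}([t]^\complement) = f(t)^\complement$ is open. Hence $e$ is continuous. For openness onto $e(X)$, it is enough to show that $e$ maps each element of a subbase of $X$ to a relatively open subset of $e(X)$; since $f$ is surjective onto $\cS$, every $S \in \cS$ equals $f(t)$ for some $t$, and $e(f(t)) = e(X) \cap [t]$ while $e(f(t)^\complement) = e(X) \cap [t]^\complement$, both relatively open in $e(X)$. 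As $\cS \cup \cS^\complement$ is a subbase for $X$ and $e$ is injective, this shows $e$ is a homeomorphism onto $e(X)$.

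\medskip

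I do not expect any single step to be a serious obstacle here — the lemma is essentially a bookkeeping exercise translating the subbase structure of $X$ into the canonical subbase of $\cP(T)$. The one point requiring a little care is injectivity, where one must use that $\cS$ is a subbase of \emph{clopen} sets (so that separation by a complement $S^\complement$ still yields an element of $\cS$ that separates the two points), rather than merely an open subbase; this is exactly where the Hausdorff hypothesis and the clopenness of $\cS$ are both used.
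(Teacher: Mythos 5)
Your proof is correct and follows essentially the same route as the paper: (F1)/(F2) give well-definedness, Hausdorffness plus surjectivity of $f$ give injectivity, and pulling back/pushing forward the subbasic sets $[t]$ gives continuity and openness onto the image. You are in fact slightly more careful than the paper in spelling out why a separating element can be taken from $\cS$ itself (rather than $\cS^\complement$) and in noting that injectivity lets you deduce openness from the behaviour on a subbase; both points are handled correctly.
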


\begin{proof}
To see that $e$ is well-defined, note that (F2) implies that $e(x)$ is a chain, and (F1) implies that $e(x)$ is down-closed. To see that $e$ is continuous, note that for every node $t \in T$, we have $e^{-1}([t]) = f(t) \in \cS$ is (subbasic) clopen in $X$. 	

To see that $e \colon X \to \cP(T)$ is injective, consider $x \neq y \in X$. Since $X$ is Hausdorff, there is $S \in \cS$ such that say $x \in S$ and $y \notin S$. Since $f$ is onto, there is $t \in T$ with $f(t) = S$. Then $t  \in e(x) \setminus e(y)$, witnessing that $e(x)$ and $e(y)$ are distinct paths of $T$. 

To see that $e \colon X \to \cP(T)$ is open onto its image, consider some subbasic clopen $S \in \cS$. We need to show that $e(S)$ is clopen in $e(X) \subseteq \cP(T)$. Since $f$ is onto, there is $t \in T$ with $f(t) = S$. We claim that $e(S) = e(X) \cap [t]$. For $\subseteq$, note that for every $x\in S$ we have $e(x) \ni t$ so $e(x) \in [t]$. For $\supseteq$, let $x \in X$ with $e(x) \in [t]$. By definition of $e$, this means $x \in f(t) = S$.
\end{proof}

\begin{thm}
	A topological space $X$ embeds into some path space if (and only 
	if) $X$ is Hausdorff and has a noetherian, nested clopen subbase.
	\end{thm}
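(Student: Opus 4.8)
The plan is to derive this from the two lemmas just proved, so the work is mostly in the ``only if'' direction which is already essentially done, plus pinning down the ``if'' direction carefully.

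\textbf{The ``only if'' direction.} Suppose $X$ embeds into a path space $\cP(T)$ for some order tree $T$. Then $X$ is Hausdorff, since path spaces are subspaces of $2^T$, which is Hausdorff, and Hausdorffness is hereditary. For the subbase: the collection $\cS_0 = \set{[t]}:{t \in T}$ of subbasic clopen sets of $\cP(T)$ is nested (as observed in the introduction: if $[t] \cap [t'] \neq \emptyset$ then $t, t'$ lie on a common chain, so one of $[t], [t']$ contains the other) and noetherian (Proposition~\ref{prop_increasing}). Restricting to the subspace $X$, the trace $\cS = \set{[t] \cap X}:{t \in T}$ remains a clopen subbase of $X$, and both nestedness and noetherianity pass to the restriction (nestedness because $A \cap B \neq \emptyset$ in $X$ implies $A \cap B \neq \emptyset$ in $\cP(T)$; noetherianity because a strictly increasing chain in $\cS$ would lift to a strictly increasing chain in $\cS_0$). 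Hence $X$ has a nested, noetherian clopen subbase.

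\textbf{The ``if'' direction.} Suppose $X$ is Hausdorff with a nested, noetherian clopen subbase $\cS$. First adjust $\cS$ so that Lemma~\ref{lem_treeassociated} applies: discard $\emptyset$ from $\cS$ if present (this changes neither the generated topology nor nestedness/noetherianity), and add $X$ itself to $\cS$ if not already present (adding the whole space to a subbase is harmless, and it keeps $\cS$ nested and noetherian since $X$ is comparable to — indeed a superset of — everything). Now apply Lemma~\ref{lem_treeassociated} to obtain the rooted tree $T_\cS = (\cS, \supseteq)$, together with the identity bijection $f \colon T_\cS \to \cS$. One checks that $(T_\cS, f)$ is an $\cS$-tree: condition (F1) says $S \leq S'$ in $T_\cS$, i.e.\ $S \supseteq S'$, implies $f(S) = S \supseteq S' = f(S')$, which is immediate; condition (F2) is exactly the observation in the proof of Lemma~\ref{lem_treeassociated} that incomparable nodes of $T_\cS$ are disjoint. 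Then Lemma~\ref{lem_assmap} applies directly: since $X$ is Hausdorff and $\cS$ is a clopen subbase, the map $e \colon X \to \cP(T_\cS)$, $x \mapsto \set{t \in T_\cS}:{x \in f(t)}$, is an embedding of $X$ into the path space $\cP(T_\cS)$.

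\textbf{Main obstacle.} There is no serious obstacle here; the statement is essentially a bookkeeping corollary of Lemmas~\ref{lem_treeassociated} and \ref{lem_assmap}. The only points requiring a moment's care are the routine adjustments to $\cS$ (removing $\emptyset$, adjoining $X$) so that the hypotheses of Lemma~\ref{lem_treeassociated} are literally met, and verifying in the ``only if'' direction that nestedness and noetherianity genuinely survive restriction to a subspace — both of which are straightforward once spelled out.
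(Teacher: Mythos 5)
Your proof is correct, and the ``if'' direction is exactly the paper's argument (combine Lemmas~\ref{lem_treeassociated} and \ref{lem_assmap}, after the harmless normalisation of discarding $\emptyset$ and adjoining $X$). The only divergence is in the ``only if'' direction: the paper defers this to Theorem~\ref{thm_charpath}, proved later, which shows that every path space is homeomorphic to a compact ray space and then invokes Proposition~\ref{prop_increasing} for ray spaces; you instead argue directly that $\set{[t]}:{t \in T}$ is a nested, noetherian clopen subbase of $\cP(T)$ itself and that both properties survive restriction to a subspace. Your route is more self-contained (no forward reference), at the cost of two small points you should spell out: Proposition~\ref{prop_increasing} as stated concerns $\cR(T)$, not $\cP(T)$, so you should note that for path spaces $[t] \subseteq [s]$ iff $s \leq t$ (since $\dc{t} \in [t]$), whence a strictly increasing chain in $\set{[t]}:{t\in T}$ would give a strictly decreasing chain in $T$, contradicting well-foundedness; and the ``lifting'' of a strictly increasing chain of traces $[t_i] \cap X \subsetneq [t_{i+1}] \cap X$ to a strictly increasing chain of the $[t_i]$ themselves is not automatic from the inclusions of traces alone --- you need nestedness of $\set{[t]}:{t\in T}$ in $\cP(T)$ (the traces from index $1$ on are nonempty and comparable, so the sets $[t_i], [t_{i+1}]$ intersect, are therefore comparable, and the inclusion must go the right way). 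With those two sentences added, the argument is complete.
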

	
	\begin{proof}
		For the if-direction, combine Lemmas~\ref{lem_treeassociated} and \ref{lem_assmap}. The only if-direction will follow from Theorem~\ref{thm_charpath} below, showing that every path-space is a ray-space, and hence has a noetherian, nested clopen subbase (Proposition~\ref{prop_increasing}).
	\end{proof}

\subsection{Ray spaces}
We now prove our characterisation for (subspaces) of ray spaces announced in the introduction, merged for convenience into one single statement.

\begin{thm}
\label{thm_charrayrepeat}
	The following are equivalent for a Hausdorff space $X$:
	\begin{enumerate}
		\item There is a \{surjective\} embedding $X \hookrightarrow \cR(T)$ into the ray space of a [special] tree $T$, 
		\item $X$ admits a  \{hereditarily complete\} nested clopen subbase that is
noetherian [and $\sigma$-disjoint].
 	\end{enumerate}
\end{thm}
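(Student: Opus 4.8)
The plan is to prove the two implications separately, treating the bracketed special/$\sigma$-disjoint variant in parallel with the main statement throughout.

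For the forward direction $(1)\Rightarrow(2)$, suppose first that $X=\cR(T)$. The standard subbase $\cS=\set{[t]}:{t\in T}$ is clopen by definition, nested (as observed in the introduction), and noetherian by Proposition~\ref{prop_increasing}. If $T$ is special, then $\cS$ is $\sigma$-disjoint: writing $T$ as a countable union of antichains $A_n$, the subcollections $\set{[t]}:{t\in A_n}$ are disjoint, since nodes on an antichain are incomparable and hence have disjoint $[\cdot]$-sets. Hereditary completeness is exactly Proposition~\ref{prop_basecpt2}. For a general (surjective) embedding $X\hookrightarrow\cR(T)$, I would pull the subbase back: $\cS\restriction X=\set{[t]\cap X}:{t\in T}$ inherits nestedness, $\sigma$-disjointness, and clopenness, and it is noetherian because an increasing chain in $\cS\restriction X$ lifts to an increasing chain in $\cS$ (using surjectivity onto $\cS$, or simply that $X$ is dense enough that distinct nonempty traces come from distinct members). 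Hereditary completeness passes to the subspace $X$ since a closed subset of $X$ is the intersection of $X$ with a closed subset of $\cR(T)$. I should double-check that when the embedding is merely an embedding (not surjective) one can discard the members of $\cS$ whose trace on $X$ is empty without losing the subbase property — the empty set is not allowed in $\cS$.

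For the harder direction $(2)\Rightarrow(1)$, let $\cS$ be a nested clopen subbase that is noetherian, hereditarily complete, and (in the special case) $\sigma$-disjoint. Adding $X$ to $\cS$ if necessary and removing $\emptyset$, Lemma~\ref{lem_treeassociated} turns $(\cS,\supseteq)$ into a rooted tree $T_\cS$, special exactly when $\cS$ is $\sigma$-disjoint, and Lemma~\ref{lem_assmap} gives an embedding $e\colon X\hookrightarrow\cP(T_\cS)$ with $e(x)=\set{t}:{x\in f(t)}$. The crux is to show $e(X)=\cR(T_\cS)$, i.e. that $e$ lands exactly in the set of rays (paths without maximum) and hits every ray. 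That $e(x)$ has no maximum: if $t$ were the maximal node of $e(x)$, then $f(t)$ is a subbasic clopen neighbourhood of $x$, and since $\cS$ is a subbase there is a finite $\cS_0\subseteq\cS$ with $x\in\bigcap\cS_0\subseteq f(t)$; each member of $\cS_0$ that contains $x$ corresponds to a node in $e(x)\le t$, and their intersection being contained in $f(t)$ but every node $\le t$ should already... — here I need to be careful and instead argue that maximality of $t$ in $e(x)$ combined with $f$ being onto and $\cS$ separating points of $X$ forces $f(t)=\singleton{x}$, whence $\singleton x$ is open, but then I can re-split: actually the clean statement to aim for is that $e(x)$ has a maximum if and only if $x$ is isolated \emph{and} $\singleton x\in\cS$, and then handle that degenerate case by a pruning of $T_\cS$ (replace such a node by an extra successor), exactly as in the examples in Section~2.3. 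For surjectivity onto rays: given a ray $r\subseteq T_\cS$, the set $f(r)=\set{f(t)}:{t\in r}$ is a chain in $\cS$ under $\supseteq$, hence centered, hence by completeness (the $Y=X$ case of hereditary completeness) $\bigcap f(r)\ne\emptyset$; picking $x$ in this intersection gives $r\subseteq e(x)$, and minimality/maximality considerations together with the fact that $r$ has no maximum should force $r=e(x)$.

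\textbf{Main obstacle.} I expect the genuine difficulty to be precisely the surjectivity-onto-rays step, and in particular why hereditary completeness (rather than mere completeness) is the right hypothesis: completeness alone gives some $x$ with $r\subseteq e(x)$, but to get $r=e(x)$ one must rule out that $e(x)$ strictly extends $r$, and to match a prescribed ray one likely has to intersect with a well-chosen closed set (the closure of a suitable subset of $X$, or the complement of finitely many $[s]$'s for tops $s$ of $r$) and apply completeness of $\cS$ restricted to \emph{that} closed set. Pinning down which closed set, and checking the resulting $x$ really has $e(x)=r$ — i.e. that no node of $T_\cS$ lies strictly above all of $r$ yet below the top of $e(x)$ — is where the argument must do real work, and it will also be the place where one sees that the tops of a ray in $T_\cS$ may be numerous (matching the Remark after the definition of tops in Section~2.1, and Examples~\ref{example_onepointcpt} and \ref{example_onepointcptres}).
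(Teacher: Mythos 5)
Your skeleton matches the paper's: the forward direction is exactly the content of Section~\ref{sec25}, and for the converse one builds the tree $(\cS,\supseteq)$ via Lemmas~\ref{lem_treeassociated} and \ref{lem_assmap} and must then (a) arrange that no image path $e(x)$ has a maximum and (b) derive surjectivity onto rays from completeness. But both (a) and (b) contain genuine gaps. For (a), your proposed ``clean statement'' --- that $e(x)$ has a maximum if and only if $x$ is isolated with $\singleton{x}\in\cS$ --- is false. Take $X$ the one-point compactification of an uncountable discrete set $D$ with subbase $\cS=\Set{X}\cup\set{\Set{d}}:{d\in D}$; this is nested, clopen, noetherian and hereditarily complete, yet $e(\infty)$ is the single root node, a path with maximum, while $\infty$ is not isolated and $\Set{\infty}\notin\cS$. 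In general $e(x)$ has a maximum exactly when $\set{S\in\cS}:{x\in S}$ has a $\supseteq$-least element, which has nothing to do with $x$ being isolated. Consequently ``replace such a node by an extra successor'' cannot repair it: a single new successor only moves the maximum up by one. The paper's fix is to insert a full copy of $\NN$ above every node $t$ for which some $e'(x)$ equals $\dc{t}$ (formally a lexicographic sum $\cL(P_t\colon t\in T')$ with $P_t=\NN$ for such $t$), all new nodes carrying the same set $f'(t)$, so that $e(x)$ acquires a cofinal $\omega$-chain and becomes a ray; in the example above this recovers precisely the tree of Example~\ref{example_onepointcpt}.

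For (b), which you correctly flag as the main obstacle but leave unresolved, the closed set to use is not the complement of finitely many top-sets: for a ray $\varrho'$ of $T'$ allegedly missed by $e'$, one takes $U=\bigcup_{z\in Z}f'(z)$ where $Z$ is the set of \emph{all} (possibly infinitely many) tops of $\varrho'$, and applies completeness of $\cS\restriction U^\complement$ to the chain $\set{f'(t)\cap U^\complement}:{t\in\varrho'}$. This chain is centered --- each member is nonempty because $f'(t)\supsetneq f'(t')$ for $t<t'$ in $\varrho'$ while every $f'(z)$ with $z\in Z$ is contained in every such $f'(t')$ --- and its intersection is empty precisely because any $x$ in it would satisfy $\varrho'\subseteq e'(x)$ with $e'(x)$ avoiding all tops of $\varrho'$, forcing $e'(x)=\varrho'$. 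Removing only finitely many top-sets leaves the intersection possibly nonempty, so your version of the argument does not close; and note that this single application of hereditary completeness simultaneously produces the preimage point and pins down $e'(x)=\varrho'$, so no separate ``minimality/maximality'' step is needed. (A minor further point: in the non-surjective case your claim that hereditary completeness passes to an arbitrary subspace is not valid --- closures must be taken in the ambient space --- but it is also not needed, since the braces in the theorem only require hereditary completeness when the embedding is onto.)
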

This theorem should be read as follows: Adding one or both conditions in brackets in assertion (1), one also needs to add the condition(s) in the corresponding brackets in assertion (2).

\begin{proof}
The implication $(1) \Rightarrow (2)$ follows from the results in Section~\ref{sec25}.

	For $(2) \Rightarrow (1)$, let $\cS$ be a nested noetherian clopen subbase for a Hausdorff space $X$, and consider the associated $\cS$-tree $T' = T_\cS$ from Lemma~\ref{lem_treeassociated} with the embedding
	$$e'\colon X \to \cP(T')$$
	from Lemma~\ref{lem_assmap}.
	Our plan is to modify $T'$ to an $\cS$-tree $T$ such that the resulting $e\colon X \to \cP(T)$ becomes an embedding 
	of $X$ into $\cR(T)$
	that will be surjective in the case where $\cS$ is hereditarily complete.

Recall the \emph{lexicographical sum} $\cL (P_t \colon t \in T)$ of a family of posets $(P_t \colon t \in T)$ indexed by another poset $T$ is the set
$$\{(t,p) \colon t \in T, p \in P_t \}$$
provided with the partial order
$$(t,p) \leq^* (t',p') \; \text{ if and only if } t<t',  \text{ or }  (t=t' \text{ and } p \leq p').$$ 
If $T$ is a tree and all $P_t$ are well-ordered, then $\cL (P_t \colon t \in T)$ is a tree, too.

Now let $(T',\leq) = (\cS,\supseteq)$, interpreted as an $\cS$-tree $(T',f')$. For every $t \in T'$ let $P_t = 
	\NN$ if there is $x \in X$ with $e'(x) = \dc{t}$, and let $P_t =\Set{0}$ otherwise. Consider $T = \cL(P_t \colon t \in T')$ as an $\cS$-tree where $f((t,p)) = f'(t)$. Intuitively, this corresponds to inserting an $\omega$-sequence of nodes above certain nodes $t$ of $T'$, with all successors $s$ of $t$ in $T'$ becoming tops of this $\omega$-sequence in $T$.
	
	We claim that the associated embedding $e \colon X \to \cP(T)$ (Lemma~\ref{lem_assmap}) satisfies $e(X) \subseteq \cR(T)$. Indeed, if $e'(x) \in \cR(T')$, then $e(x) \in \cR(T)$ is still true. And if $e'(x)$ is a path with maximum say $t \in T'$, then $\set{(t,n)}:{n \in \NN}$ is cofinal in $e(x)$ by construction, so $e(x)$ is a ray.

Moreover, if $\cS$ is $\sigma$-disjoint, then $T'$ is special by Lemma~\ref{lem_treeassociated}, and then $T$ is special, too.

To complete the proof, it hence remains to show that in the case where $\cS$ is hereditarily complete, the embedding $e$ also satisfies $e(X) \supseteq \cR(T)$.  As first step, we shall argue that if $\cS$ is hereditarily complete, then $\cR(T')$ is contained in the image of $e'$. Indeed, suppose for a contradiction that there is $\varrho' \in \cR(T')$ which is not in the image of $e'$. Write $Z$ for the collection of tops of $\varrho'$ in $T'$ (possibly empty), and let $U = \bigcup_{z \in Z} f'(z)$, an open subset of $X$. Then $U^\complement$ is a closed subspace of $X$. Since $f'(t) \subsetneq f'(t')$ for all $t < t' \in \varrho$, the collection 
$$\set{f'(t) \cap U^\complement}:{t \in \varrho'}$$ has the finite intersection property with empty intersection, contradicting that $\cS$ was hereditarily complete. 

But then $\cR(T)$ is contained in the image of $e$ as well: Indeed, given a ray $\varrho \in \cR(T)$, either $\varrho' = \varrho \cap T'$ is a ray in $T'$, in which case the $x \in X$ satisfying $e'(x) = \varrho '$ also satisfies $e(x) = \varrho$, or $\varrho' = \varrho \cap T'$ is of the form $\varrho' = \dc{t}$, and so $\varrho$ was created in response to the fact that there was an $x \in X$ with $e'(x) = \dc{t}$. But then $e(x) = \varrho$ as desired.
\end{proof}

Observing that all characterising conditions in Theorem~\ref{thm_charrayrepeat} are closed-hereditary, we obtain:

\begin{cor}
\label{cor_closedclosed}
	Every closed subspace of a ray space is itself a ray space.
	\end{cor}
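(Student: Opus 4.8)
The plan is to read off Corollary~\ref{cor_closedclosed} directly from Theorem~\ref{thm_charrayrepeat} by checking that the characterising property in clause~(2) — having a hereditarily complete nested clopen subbase that is noetherian — passes to closed subspaces. First I would fix a ray space $\cR(T)$ and a closed subspace $Y \subseteq \cR(T)$, and invoke the equivalence $(1) \Leftrightarrow (2)$ of Theorem~\ref{thm_charrayrepeat}: since $\cR(T)$ \emph{is} a ray space, it is Hausdorff and has a nested clopen subbase $\cS$ that is noetherian and hereditarily complete. The subspace $Y$ is Hausdorff, so it remains to produce such a subbase on $Y$, and the natural candidate is the trace $\cS \restriction Y = \set{S \cap Y}:{S \in \cS}$.

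The key steps are the following routine verifications about $\cS \restriction Y$. It is a clopen subbase for the subspace topology on $Y$ because restriction commutes with finite intersections and complements within $Y$, and each $S \cap Y$ is clopen in $Y$. Nestedness is preserved: if $(S \cap Y) \cap (S' \cap Y) \neq \emptyset$ then $S \cap S' \neq \emptyset$, so $S \subseteq S'$ or $S' \subseteq S$ by nestedness of $\cS$, hence the same containment holds after intersecting with $Y$. The noetherian property is preserved because a $\subseteq$-increasing sequence $S_0 \cap Y \subseteq S_1 \cap Y \subseteq \cdots$ need not lift to an increasing sequence in $\cS$ directly, so here I would instead argue as follows: one may as well pass to the sub-subbase of those $S \in \cS$ with $S \cap Y \neq \emptyset$, and on a nested family the relation $S \cap Y \subseteq S' \cap Y$ together with $S \cap Y \neq \emptyset$ forces $S \subseteq S'$ (using nestedness again, as in the proof of Lemma~\ref{lem_treeassociated}), so an increasing chain in the trace does lift, and eventual constancy transfers. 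Finally, hereditary completeness is essentially immediate from the definition: a closed subspace $Z$ of $Y$ is also closed in $\cR(T)$ (as $Y$ is closed), and $(\cS \restriction Y) \restriction Z = \cS \restriction Z$, which is complete by hereditary completeness of $\cS$.

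I do not expect a genuine obstacle here; the only point requiring a moment's care is the noetherian clause, since ``increasing in the trace'' is a priori weaker than ``increasing in $\cS$'', and the fix is the nestedness observation above that upgrades trace-containment of nonempty traces to honest containment. Having checked all four properties, $Y$ satisfies Theorem~\ref{thm_charrayrepeat}(2) (in the plain version, without the bracketed conditions), and the implication $(2) \Rightarrow (1)$ of that theorem — applied with the ``surjective embedding'' option — yields that $Y$ is homeomorphic to the ray space of a tree, i.e.\ $Y$ is itself a ray space. (If one wants the parenthetical refinement that a closed subspace of a ray space of a \emph{special} tree is again such, one additionally notes that $\sigma$-disjointness of $\cS$ passes to $\cS \restriction Y$ trivially, and applies the bracketed version of the theorem.)
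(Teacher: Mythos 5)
Your argument is precisely the paper's: the paper derives this corollary in one line by ``observing that all characterising conditions in Theorem~\ref{thm_charrayrepeat} are closed-hereditary'', and you have simply written out those verifications (correctly locating the one place where closedness of $Y$ is actually needed, namely hereditary completeness). The only nit is in the noetherian step: $S\cap Y\subseteq S'\cap Y$ with $S\cap Y\neq\emptyset$ only forces $S$ and $S'$ to be \emph{comparable}, not $S\subseteq S'$ (one could have $S'\subsetneq S$ with equal traces), but a \emph{strict} inclusion of nonempty traces does lift to $S\subsetneq S'$, which is all that eventual constancy requires.
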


\begin{cor}
	The class of ray spaces and the class of branch spaces are incomparable.
	\end{cor}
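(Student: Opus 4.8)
The plan is to exhibit one ray space that is not a branch space, and one branch space that is not a ray space, thereby showing neither class contains the other.

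For the first direction, I would use a \emph{compact} ray space that is not a branch space. By Example~\ref{example_onepointcpt}, the one-point compactification of an uncountable discrete space of size $\kappa$ is a ray space; call it $X$. I claim $X$ is not a branch space. Indeed, branch spaces admit the collection $\set{[t]}:{t \in T}$ as a \emph{base} consisting of clopen sets, so every branch space has a clopen base, i.e.\ is zero-dimensional; but more to the point, in a branch space $\cB(T)$ the sets $[t]$ form a nested clopen base, and a compact space with a nested clopen base has the property that the base is in fact (by compactness and nestedness) ``tree-like'' in a way that forces every point to have a neighbourhood base of clopen sets each of which, if not the whole space, misses a nonempty clopen set. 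The cleaner argument: in any branch space the non-isolated point $x$ would need a neighbourhood base from $\set{[t]}:{t\in x}$, but each such $[t]$ with $[t]\neq X$ has clopen complement $[t]^\complement$ which, together with finitely many $[s]$ for successors $s$ of $t$, would have to cover $X$; iterating and using compactness of $X$ shows $X$ would be a clopen-base space in which the unique non-isolated point has a neighbourhood base of sets each co-\emph{finite}-union-of-clopens \dots which is exactly what $X$ is, so this does not immediately give a contradiction. So instead I would argue directly: every branch space $\cB(T)$ has a base $\set{[t]}:{t\in T}$; if $\cB(T)\cong X$ is compact, the unique non-isolated point $p$ corresponds to a branch $x$ with $\set{[t]}:{t\in x}$ a neighbourhood base at $p$, and each $[t]\ni p$ contains all but finitely many isolated points; but then for $t<t'$ in $x$, the set $[t]\setminus[t']$ is a nonempty clopen subset of $X$ containing infinitely many isolated points and not containing $p$ — fine — yet $[t']$ is a \emph{proper} clopen neighbourhood of $p$, so $X\setminus[t']$ is a nonempty clopen set disjoint from $[t']$, and this set contains infinitely many isolated points while $[t']$ also contains infinitely many. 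That is consistent with $X$, so the obstruction must come from elsewhere: the point is that in a \emph{branch} space, $[t']$ itself is again a branch space, and being a proper clopen neighbourhood of the unique non-isolated point it is a \emph{smaller} copy of $X$; iterating down the chain $x$ gives a strictly decreasing $\omega$-sequence of clopen copies of $X$ with intersection $\Set{p}$, hence $p$ is a $G_\delta$, contradicting that in the one-point compactification of an uncountable discrete set the point at infinity is not a $G_\delta$ (an intersection of countably many co-$\le$countable-plus-something neighbourhoods still omits uncountably many isolated points unless one of them already does). I expect this $G_\delta$ argument, or equivalently a direct cardinality count on neighbourhood bases, to be the crux of this direction, and I would phrase it as: every branch space is \emph{first-countable at every point of a branch of countable order type}, so a compact branch space with a non-isolated point has that point as a $G_\delta$; but $X$ does not, contradiction.

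For the second direction, the excerpt hands us exactly what we need: the Michael-line branch space from Example~\ref{example_Michaelline}. By Lemma~\ref{lem_Michaelline} it contains a closed copy of $\QQ$, which is not Baire. On the other hand, by Proposition~\ref{prop_basecpt} every ray space is base-complete, hence Baire, and by Corollary~\ref{cor_closedclosed} every closed subspace of a ray space is again a ray space, hence also Baire. So if the Michael-line branch space were a ray space, its closed copy of $\QQ$ would be a Baire ray space — contradiction. This direction is short and I expect no obstacle.

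Assembling: I would write the corollary's proof in two short paragraphs — first citing Example~\ref{example_Michaelline}, Lemma~\ref{lem_Michaelline}, Corollary~\ref{cor_closedclosed} and Proposition~\ref{prop_basecpt} for ``some branch space is not a ray space,'' then giving the $G_\delta$/first-countability argument against Example~\ref{example_onepointcpt} for ``some ray space is not a branch space.'' The main obstacle is making the second argument airtight: I need the precise statement that in a branch space $\cB(T)$, if $x$ is a branch and $t\in x$, then $[t]$ (with the subspace topology) is homeomorphic to the branch space of $\dc{t}\cup\uc{t}$ restricted appropriately — i.e.\ that clopen pieces of branch spaces are branch spaces — together with the observation that a strictly increasing cofinal $\omega$-chain in $x$ (which exists iff $x$ has order type of cofinality $\omega$, in particular if $x\cong\omega$) yields the $G_\delta$. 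To sidestep even that, the safest route is the direct one: a compact branch space is metrizable iff second countable, but more simply, I will just note that a branch space is first-countable at every branch of countable length, and in the one-point compactification $X$ of an uncountable discrete set, the point at infinity is not a $G_\delta$ and has no countable neighbourhood base, whereas the complement of that point is an uncountable discrete set which, inside any branch space $\cB(T)\cong X$, would have to consist of isolated points each of the form $[t_\alpha]$ with $t_\alpha$ a maximal node, forcing uncountably many pairwise incomparable maximal nodes, all lying below no common bound, so the unique remaining branch $b$ satisfies: every $[t]\ni b$ contains cofinitely many of the $[t_\alpha]$, hence $T$ has a node $t$ with uncountably many incomparable nodes above it, and then there is no countable neighbourhood base at $b$ — but that is fine for $X$ — the real contradiction is that $b$ would need the $[t]$ for $t\in b$ as a base, and since $\cB(T)$ is compact, $b$ is the unique limit, so every cofinal chain in $b$ of type $\omega$ gives a countable neighbourhood base at the point at infinity of $X$, which is impossible. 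I will present this last version.
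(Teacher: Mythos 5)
Your second half is fine and is essentially the paper's argument: the Michael-line branch space of Example~\ref{example_Michaelline} contains a closed copy of $\QQ$ (Lemma~\ref{lem_Michaelline}), closed subspaces of ray spaces are ray spaces (Corollary~\ref{cor_closedclosed}), and ray spaces are Baire (Proposition~\ref{prop_basecpt}), so this branch space is not a ray space. No issues there.

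The first half, however, has a genuine gap. You choose the right witness --- the one-point compactification $X$ of an uncountable discrete space of size $\kappa$, Example~\ref{example_onepointcpt} --- but your concluding step is: ``every cofinal chain in $b$ of type $\omega$ gives a countable neighbourhood base at the point at infinity of $X$, which is impossible.'' All this establishes is that the branch $b$ playing the role of the point at infinity admits \emph{no} cofinal $\omega$-chain, i.e.\ that $\operatorname{cf}(b)>\omega$; it is not a contradiction. Nothing forces a branch of an order tree to have countable cofinality, and your earlier remark that branch spaces are first countable ``at every branch of countable order type'' does not apply to $b$. Worse, under your hypothesis the countable-cofinality case provably never occurs: if $\cB(T)\cong X$, then each $[t]$ with $t\in b$ is a cofinite neighbourhood of the point at infinity, so the finite sets $[t]^\complement$ increase along $b$ with union the whole uncountable set of isolated points, which already forces $\operatorname{cf}(b)\ge\kappa$. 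So the case you derive a contradiction from is empty, and the case that actually occurs is left untreated.

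To close the gap you need one of two things. The paper's route is to quote Arhangel'skii's theorem that every compact branch space is metrizable (via Nyikos' survey) and observe that $X$ is compact but not metrizable. Alternatively, you can finish your elementary argument in the uncountable-cofinality case: the map $|[t]^\complement|$ is a non-decreasing $\NN$-valued function on the chain $b$, which has uncountable cofinality, hence is eventually constant; since the sets $[t]^\complement$ are increasing, they too are eventually constant, so $\bigcup_{t\in b}[t]^\complement$ is finite --- contradicting that it must equal the uncountable set $X\setminus\Set{b}$. Either fix is short, but as written your proof of this direction does not go through.
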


\begin{proof}
	The one-point compactification of an uncountable discrete space, Example~\ref{example_onepointcpt}, is a ray space that cannot be represented as a branch space (as every compact branch space is metrizable by a result of Archangel'ski \cite{archangelskii}, see also Nyikos' survey article \cite[Theorem~1.3]{nyikos1975some}.).
	
	Conversely, the branch space $\cB(T)$ of the Michael-line type tree in Example~\ref{example_Michaelline} is not a ray space by Corollary~\ref{cor_closedclosed}, as its closed copy $\QQ$ cannot by represented as a ray space by Theorem~\ref{thm_charmetr}.
		\end{proof}

\subsection{Path spaces}
For our topological characterisation of path spaces, we will show that the class of path spaces coincides with the class of compact ray spaces. Via Theorem~\ref{thm_charrayrepeat}, this gives a purely topological characterisation of path spaces of (special) trees, Corollary~\ref{cor_char_pathspaces} below.

\begin{thm}
\label{thm_charpath}
	The following are equivalent for a space $X$:
	\begin{enumerate}
		\item $X$ is homeomorphic to a path space of a [special] tree,
		\item $X$ is homeomorphic to a compact ray space of a [special] tree.
	\end{enumerate}
\end{thm}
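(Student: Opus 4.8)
The plan is to prove both implications directly, exploiting the dense embedding $T \subseteq \cP(T)$ with boundary $\cR(T)$.

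For the implication $(2) \Rightarrow (1)$, suppose $X$ is homeomorphic to a compact ray space $\cR(T)$ of a [special] tree $T$. The idea is that $\cR(T)$ sits inside the compact space $\cP(T)$ as the boundary of the compactification $T \subseteq \cP(T)$, and since $\cR(T)$ is itself compact, it is closed in $\cP(T)$; hence $T = \cP(T) \setminus \cR(T)$ is open in $\cP(T)$, which means every node of $T$ is an isolated point of $\cP(T)$. I would then build a new tree $\hat T$ by attaching, above each node $t \in T$, an infinite ray (an $\omega$-chain of new nodes, each with a single successor), making $t$ a limit of that ray's "fresh" extension; more precisely, take the lexicographic sum $\hat T = \cL(P_t : t \in T)$ where $P_t = \NN$ for every $t$ (the construction from the proof of Theorem~\ref{thm_charrayrepeat}). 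Then each old $x \in \cR(T)$ remains a ray of $\hat T$, and each old node $t \in T$ now becomes the ray $\set{(s, n)}:{s < t \text{ or } (s=t)} \ldots$ — more carefully, the new cofinal $\omega$-sequence over $t$ turns the former node $t$ into a ray of $\hat T$. One checks that $\cR(\hat T)$ is homeomorphic to $\cP(T)$: the old isolated nodes of $\cP(T)$ correspond to the new isolated rays over each $t$, and the boundary points match up, with neighbourhoods matching because the $[t,F]$-basic opens transfer correctly. Finally, $\hat T$ is special whenever $T$ is, since the lexicographic sum with countable well-ordered fibres preserves being a union of countably many antichains.

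For the implication $(1) \Rightarrow (2)$, suppose $X$ is homeomorphic to a path space $\cP(T)$ of a [special] tree $T$. Since $\cP(T)$ is closed in $2^T$, it is compact. The plan is to represent $\cP(T)$ as a ray space by applying Theorem~\ref{thm_charrayrepeat}: I would use the standard clopen subbase $\cS = \set{[t]}:{t \in T}$ of $\cP(T)$, which is nested and, by (the path-space analogue of) Proposition~\ref{prop_increasing}, noetherian. Since $\cP(T)$ is compact, every centered subfamily of any clopen subbase has non-empty intersection (finite intersections are non-empty and then compactness of the nested closed sets gives a common point), and the same applies to any closed subspace, which is again compact; hence $\cS$ is automatically hereditarily complete. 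If $T$ is special, then $\cS$ is $\sigma$-disjoint — indeed $\set{[t]}:{t \in T}$ splits along the countably many antichains making $T$ special, and within a single antichain the sets $[t]$ are pairwise disjoint. By Theorem~\ref{thm_charrayrepeat}, $X \cong \cP(T)$ is therefore (surjectively) homeomorphic to the ray space of some [special] tree, and this ray space is compact since $\cP(T)$ is. Alternatively and more concretely, one can argue directly: apply the $\hat T$-construction above to $\cP(T)$ itself, inserting an $\omega$-ray over every node $t \in T$ and also over every point of the boundary that happens to be isolated — but the cleaner route is simply to quote Theorem~\ref{thm_charrayrepeat}.

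The main obstacle I expect is the direction $(2) \Rightarrow (1)$: verifying that after inserting the fresh $\omega$-rays, the resulting ray space $\cR(\hat T)$ is genuinely homeomorphic to $\cP(T)$ rather than merely to some resolution of it. The delicate point is handling boundary points of $\cP(T)$ that are already non-isolated: one must confirm that a basic neighbourhood $[t]$ of such a point, which in $\cP(T)$ contains old nodes and old boundary points, corresponds under the homeomorphism to a standard basic open $[t',F']$ in $\cR(\hat T)$ containing the right mix of new isolated rays and new boundary rays, and that no spurious identifications or separations occur. Checking continuity and openness of the candidate bijection on the standard subbase, exactly as in the proof of the Alexandroff-duplicate example and as in Theorem~\ref{thm_charrayrepeat}, should resolve this, but it is the step requiring the most care.
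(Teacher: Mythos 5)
There is a genuine gap: you never prove the implication $(2) \Rightarrow (1)$. Both of your arguments start from a path space $\cP(T)$ and represent it as a (compact) ray space. The paragraph you label ``$(2) \Rightarrow (1)$'' constructs $\hat T = \cL(P_t \colon t \in T) \cong T \times \NN$ and concludes $\cR(\hat T) \cong \cP(T)$ --- but that statement says that the \emph{path space} $\cP(T)$ is a ray space, which is (essentially the paper's proof of) $(1) \Rightarrow (2)$; it says nothing about the given compact space $X \cong \cR(T)$ being a path space. Your paragraph labelled ``$(1) \Rightarrow (2)$'' again starts from $\cP(T)$ and, via Theorem~\ref{thm_charrayrepeat}, represents it as a compact ray space; this is a correct and legitimately different route for that direction (the subbase $\set{[t]}:{t\in T}$ of $\cP(T)$ is indeed nested, noetherian, hereditarily complete by compactness, and $\sigma$-disjoint when $T$ is special), but it is the same implication a second time. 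What is missing entirely is: given a \emph{compact} ray space $\cR(T)$, produce a tree $S$ with $\cR(T) \cong \cP(S)$. The paper does this by assuming $T$ pruned, greedily decomposing $T$ into pairwise disjoint intervals of order type $\omega$, letting $S$ be the tree of minima of these intervals, and checking that $x \mapsto x \cap S$ is a homeomorphism $\cR(T) \to \cP(S)$; no argument of this kind (or any substitute) appears in your proposal, and it cannot be extracted from Theorem~\ref{thm_charrayrepeat}, which characterises ray spaces rather than path spaces.

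A secondary inaccuracy: from compactness of $\cR(T)$ you infer that $T$ is open in $\cP(T)$ and hence that ``every node of $T$ is an isolated point of $\cP(T)$''. Openness of $T$ does not give isolation of its points: if a node $t$ has infinitely many successors that are all leaves, then $\cR(T)$ may be compact (even empty) while every basic neighbourhood $[t]\setminus[F]$ of $\dc{t}$ contains infinitely many other nodes. The same issue affects your claim that the old nodes become isolated rays of $\hat T$. This does not break the homeomorphism $\cP(T) \cong \cR(T\times\NN)$ itself --- the map $p \mapsto p \times \NN$ works regardless, as in the paper --- but the heuristic picture you give of it is not correct in general.
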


For example, $\cR(2^{< \omega})$ and $\cP(\omega^{<\omega})$ are both crowded, zero-dimensional, metrizable spaces, so homeomorphic to a Cantor set.

\begin{proof}
	For the implication $(1) \Rightarrow (2)$, given a path space $\cP(T)$ of a tree $(T,\leq)$, consider the ray space $\cR(T^*)$ where $T^* = T \times \NN$ with the lexicographic ordering $\leq^*$, i.e. we have 
	$$(t,n) \leq^* (t',n') \; \text{ if and only if } t<t',  \text{ or }  (t=t' \text{ and } n \leq n').$$ 
	Intuitively, this corresponds to inserting an $\omega$-sequence of nodes above every node $t$ of $T$, with all successors $s$ of $t$ in $T$ becoming tops of this $\omega$-sequence.
	
	We claim that 
	$$f \colon \cP(T) \to \cR(T^*), \; p \mapsto p \times \NN$$
	is a homeomorphism. It is clearly a bijection, so since the domain is compact and the range is Hausdorff, it suffices to check for continuity. For this, we argue that preimages of subbasic clopen sets are (subbasic) clopen. Indeed, consider a set $[(t,n)] \in \cR(T^*)$. Since $[(t,n)] = [(t,0)]$ it follows that 
	$$f^{-1}\p{[(t,n)]} =  f^{-1}\p{[(t,0)]} = [t].$$ 
	This shows that $f$ is a homeomorphism. Moreover, if $T$ is special, say $T = \bigcup_{n \in \NN} A_n$ for antichains $A_n$, then also $T^* = \bigcup_{n,m \in \NN} A_n \times \Set{m}$ is a countable union of antichains $A_n \times \Set{m}$, completing the proof of the first implication.
	
	For the implication $(2) \Rightarrow (1)$, consider a compact ray space $\cR(T)$. We may assume that $T$ is pruned. By a transfinite greedy construction, it is straightforward to decompose $T$ into a family $\cI$ of pairwise disjoint intervals in $T$ each of order type $\omega$: at each step, pick a minimal node $t$ of the tree not yet covered by such an interval, and since $T$ is pruned, we may greedily pick an increasing $\omega$ sequence starting in $t$.
	
	Then consider $S = \set{\min I}:{I \in \cI} \subset T$ as a tree with the partial order induced by $T$. In particular, if $T$ is special, then so is $S$. We claim that 
	$$f \colon \cR(T) \to \cP(S), \; x \mapsto x \cap S$$
	is a homeomorphism. The map $f$ is well-defined: if $x$ is a down-closed chain in $T$, then $x \cap S$ is a downclosed chain in $S$, so a path. 
	
	The map $f$ is injective: if $x \neq x'$ are distinct rays of $T$, then either say $x \subseteq x'$ or $x$ and $x'$ are incomparable. In the first case, fix $t \in x' \setminus x$, and let $I \in \cI$ with $t \in I$. Because $I$ is an interval of order type $\omega$, it follows that $I \cap x = \emptyset$, so $\min I \in f(x') \setminus f(x)$ witnesses that $f(x) \neq f(x')$. In the second case, let $t \in x \setminus x'$ and $I \in \cI$ with $t \in I$. Then $I \subset x$. If $I \cap x' = \emptyset$, we are done. Otherwise, pick $t' \in x' \setminus x$, and $I'\in\cI$ with $t' \in I'$. Since $I \cap I' = \emptyset$, we have that $\min I' \in f(x') \setminus f(x)$ witnesses that $f(x) \neq f(x')$. 
	
	The map $f$ is surjective: Let $p \in \cP(S)$. If $p$ has a maximum $s$, consider $I \in \cI$ with $s \in I$ and note that $x = \dc{I} \subset T$ is a ray in $T$ with $f(x) = p$. If $p$ has no maximal element, then $x = \dc{p} \subset T$  is a ray in $T$ with $f(x) = p$.
	
	Finally, since $\cR(T)$ is compact by assumption and $\cP(S)$ is Hausdorff, it suffices to check for continuity. For this, we argue that preimages of subbasic clopen sets are (subbasic) clopen. Indeed, consider a set $U=[s] \subset \cP(S)$. Then $s \in S \subset T$ also induces a subbasic clopen set $V=[s] \subset \cR(T)$, and since the elements in $\cI$ are disjoint intervals, it follows that $f^{-1}(U) = V$. The proof is complete. 
\end{proof}

\begin{cor}
\label{cor_char_pathspaces}
	A topological space is (homeomorphic to) a path space if and only if it is compact Hausdorff and has a noetherian, nested clopen subbase. \hfill \qed
\end{cor}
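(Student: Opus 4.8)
The plan is to deduce Corollary~\ref{cor_char_pathspaces} by combining three facts established earlier in the excerpt: Theorem~\ref{thm_charpath}, which says path spaces are exactly the compact ray spaces; Theorem~\ref{thm_charrayrepeat}, which characterises ray spaces via nested, noetherian, hereditarily complete clopen subbases; and Proposition~\ref{prop_increasing}, which says the standard subbase $\cS=\set{[t]}:{t\in T}$ of any ray space is noetherian (it is nested by construction, and clopen by definition of the subbase topology). So the task reduces to matching up the ``compact ray space'' description with the ``compact Hausdorff $+$ noetherian nested clopen subbase'' description.

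For the forward direction, suppose $X$ is a path space. Then $X$ is compact Hausdorff (the path space $\cP(T)\subseteq 2^T$ is a closed subspace of a compact Hausdorff space, as noted in the introduction). By Theorem~\ref{thm_charpath} (with the bracketed clauses omitted), $X$ is homeomorphic to a compact ray space $\cR(T)$, and by Proposition~\ref{prop_increasing} the standard subbase of $\cR(T)$ is a nested, noetherian clopen subbase; transporting it through the homeomorphism gives one for $X$.

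For the converse, suppose $X$ is compact Hausdorff with a noetherian, nested clopen subbase $\cS$. Since $X$ is compact, $\cS$ is automatically hereditarily complete: for any closed $Y\subseteq X$, $Y$ is compact, so any centered subfamily of $\cS\restriction Y$ is a family of closed subsets of the compact space $Y$ with the finite intersection property, hence has nonempty intersection. Thus $\cS$ witnesses condition~(2) of Theorem~\ref{thm_charrayrepeat} (without the $\sigma$-disjointness bracket), so $X$ is (homeomorphic to) a ray space; being compact, it is a compact ray space, and by Theorem~\ref{thm_charpath} a path space.

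I do not expect a serious obstacle here — the corollary is a bookkeeping assembly of results already proved. The one point that needs a word of care is the observation that compactness upgrades an arbitrary nested noetherian clopen subbase to a \emph{hereditarily complete} one, so that Theorem~\ref{thm_charrayrepeat} applies; this is exactly the finite-intersection-property argument just indicated, and is the only nontrivial line of the argument. Everything else is direct quotation of the cited statements, which is presumably why the authors mark the proof with \hfill\qed rather than writing it out.
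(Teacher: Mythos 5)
Your argument is correct and is exactly the assembly the paper intends: the corollary is marked \qed because it follows by combining Theorem~\ref{thm_charpath} with Theorem~\ref{thm_charrayrepeat}, and the one substantive observation you isolate --- that in a compact space any clopen subbase is automatically hereditarily complete, by the finite-intersection-property argument --- is precisely the remark the paper itself makes in the proof of Corollary~\ref{cor_endspaceEberlein}. No difference in approach worth noting.
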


\begin{cor}
\label{cor_endspaceEberlein}
	The following are equivalent for a Hausdorff space $X$:
	\begin{enumerate}
		\item $X$ is homeomorphic to a compact end space of a graph,
		\item $X$ is homeomorphic to a compact ray space of a special tree,
		\item $X$ is homeomorphic to a path space of a special tree,
		\item $X$ is homeomorphic to an Eberlein compact path space,
		\item $X$ is homeomorphic to an Eberlein compact ray space,
		\item $X$ is Eberlein compact and admits a nested clopen noetherian subbase.
	\end{enumerate}
\end{cor}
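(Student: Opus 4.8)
The plan is to derive this omnibus corollary by chaining together the earlier characterisation theorems together with the standard facts about Eberlein compacta from the literature, in particular Gruenhage's theorem that $\cP(T)$ is Eberlein compact if and only if $T$ is special. I would organise the proof as a cycle of implications rather than proving each pair separately.

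First I would establish $(1)\Leftrightarrow(2)$: this is immediate from the Representation Theorem~\ref{thm_kp_rep} (ends of graphs $=$ ray spaces of special trees) together with the observation that a ray space is compact as a topological property, so the compactness passes across the homeomorphism. Next, $(2)\Leftrightarrow(3)$ is exactly the statement of Theorem~\ref{thm_charpath} (the special case in brackets): compact ray spaces of special trees coincide with path spaces of special trees. For $(3)\Leftrightarrow(4)$ I would invoke Gruenhage's result \cite{Gruenhage1986} that $\cP(T)$ is Eberlein compact precisely when $T$ is special, which directly matches "path space of a special tree" with "Eberlein compact path space"; combined with the fact that every path space is compact Hausdorff (closed subspace of $2^T$), this gives the equivalence. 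For $(4)\Leftrightarrow(5)$ I would again use Theorem~\ref{thm_charpath}: path spaces $=$ compact ray spaces, and Eberlein compactness is a topological invariant, so the two Eberlein-compact classes coincide; one should note here that a compact ray space of an arbitrary tree is automatically a ray space of a special tree once it is Eberlein compact, which follows by pushing the homeomorphism back through $(5)\Rightarrow(4)\Rightarrow(3)$. Finally, for $(4)\Leftrightarrow(6)$ (or $(3)\Leftrightarrow(6)$): by Corollary~\ref{cor_char_pathspaces}, being a path space is equivalent to being compact Hausdorff with a noetherian nested clopen subbase; an Eberlein compact space is in particular compact Hausdorff, and conversely a path space that is Eberlein compact satisfies $(6)$, while a space satisfying $(6)$ is a path space by Corollary~\ref{cor_char_pathspaces} and then Eberlein by hypothesis — so $(6)$ is just "$(4)$ with the subbase condition made explicit via Corollary~\ref{cor_char_pathspaces}".

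The only mild subtlety, and the step I would be most careful about, is making sure the "special" qualifier travels correctly: for instance in $(5)\Rightarrow(3)$ one starts with an Eberlein compact ray space $\cR(T)$ of a possibly non-special tree $T$, converts it to a path space $\cP(S)$ via Theorem~\ref{thm_charpath} (the non-bracketed direction), and then must argue $S$ is special — which follows because $\cP(S)$ is homeomorphic to the Eberlein compact $\cR(T)$, hence Eberlein compact, hence $S$ is special by Gruenhage's theorem. So Gruenhage's theorem is doing the real work of "upgrading" an arbitrary tree representation to a special one whenever the space is Eberlein. I would therefore phrase the proof as: $(1)\Leftrightarrow(2)$ by Theorem~\ref{thm_kp_rep}; $(2)\Leftrightarrow(3)$ and $(4)\Leftrightarrow(5)$ by Theorem~\ref{thm_charpath}; $(3)\Leftrightarrow(4)$ by Gruenhage \cite{Gruenhage1986}; $(3)\Leftrightarrow(6)$ by Corollary~\ref{cor_char_pathspaces}, using that Eberlein compact spaces are compact Hausdorff. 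No genuinely new construction is needed — the content is entirely in correctly wiring the already-established results, with Gruenhage's characterisation of specialness as the load-bearing external input.

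\begin{proof}
The equivalence $(1)\Leftrightarrow(2)$ is Theorem~\ref{thm_kp_rep} together with the fact that compactness is a homeomorphism invariant. The equivalences $(2)\Leftrightarrow(3)$ and $(4)\Leftrightarrow(5)$ are instances of Theorem~\ref{thm_charpath}: path spaces are precisely compact ray spaces, in the special as well as in the general case, and Eberlein compactness is a topological invariant. By a theorem of Gruenhage \cite{Gruenhage1986}, a path space $\cP(T)$ is Eberlein compact if and only if $T$ is special; since every path space is a closed, hence compact, subspace of $2^T$, this yields $(3)\Leftrightarrow(4)$. Finally, $(3)\Leftrightarrow(6)$: by Corollary~\ref{cor_char_pathspaces}, a space is a path space if and only if it is compact Hausdorff with a noetherian, nested clopen subbase; combined with $(3)\Leftrightarrow(4)$, a path space of a special tree is the same as an Eberlein compact space with a noetherian, nested clopen subbase, since an Eberlein compact space is in particular compact Hausdorff.
\end{proof}
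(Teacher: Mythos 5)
Your proof is correct and follows essentially the same route as the paper: $(1)\Leftrightarrow(2)$ via Theorem~\ref{thm_kp_rep}, $(2)\Leftrightarrow(3)$ and $(4)\Leftrightarrow(5)$ via Theorem~\ref{thm_charpath}, and $(3)\Leftrightarrow(4)$ via Gruenhage. The only (inessential) difference is at $(6)$: the paper proves $(5)\Leftrightarrow(6)$ directly from Theorem~\ref{thm_charrayrepeat} together with the observation that in a compact space every clopen subbase is hereditarily complete, whereas you route through Corollary~\ref{cor_char_pathspaces}, which packages that same observation.
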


\begin{proof}
	The equivalence $(1) \Leftrightarrow (2)$ follows from Theorem~\ref{thm_kp_rep}. The equivalences $(2) \Leftrightarrow (3)$ and $(4) \Leftrightarrow (5)$ follow from Theorem~\ref{thm_charpath}. The equivalence $(3) \Leftrightarrow (4)$ has been proven by Gruenhage in \cite{Gruenhage1986}. And finally, the equivalence $(5) \Leftrightarrow (6)$ follows from Theorem~\ref{thm_charrayrepeat}, noting that in a compact space, any clopen (sub-)base is hereditarily complete.
\end{proof}

\subsection{Branch spaces} In this section we establish the following characterisation of branch spaces.

\begin{thm}
\label{thm_charbranch}
	The following are equivalent for a Hausdorff space $X$:
	\begin{enumerate}
		\item There is a \{surjective\} embedding $X \hookrightarrow \cB(T)$ into the branch space of a [special] tree $T$, 
		\item $X$ admits a  \{complete\} nested 
		base [that is
 $\sigma$-disjoint].
 	\end{enumerate}
\end{thm}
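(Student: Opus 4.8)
The plan is to mirror the structure of the proof of Theorem~\ref{thm_charrayrepeat}, replacing `subbase' by `base' throughout, and checking that the base version of each lemma goes through. For the implication $(1)\Rightarrow(2)$, observe first that for any tree $T$ the collection $\cS=\set{[t]}:{t\in T}$ is a nested clopen base of $\cB(T)$ (as already noted after Lemma~\ref{lem_standardbase}), and it is complete by the argument of Proposition~\ref{prop_basecptbranch}; moreover, if $T$ is special, then the argument inside the proof of Lemma~\ref{lem_treeassociated} shows that incomparable nodes give disjoint basic sets, so writing $T=\bigcup_{n}A_n$ as a countable union of antichains exhibits $\cS=\bigcup_n\set{[t]}:{t\in A_n}$ as a $\sigma$-disjoint collection. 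Finally, a base of a space restricts to a base of any subspace, and completeness/$\sigma$-disjointness are clearly inherited by the restriction to a subspace (for completeness one uses that a base of a subspace $Y\subseteq\cB(T)$ restricted from $\cB(T)$ is still a base, and centred subfamilies have the relevant intersection property), which handles the non-surjective case.

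For $(2)\Rightarrow(1)$, let $\cB$ be a nested base of $X$; we may assume $X\in\cB$ and $\emptyset\notin\cB$. The key point is that a nested \emph{base} is automatically noetherian: if $[B_0]\subsetneq B_1\subsetneq\cdots$ were a strictly increasing sequence, pick $x\in B_0$; since $\cB$ is a base there is $B\in\cB$ with $x\in B\subseteq B_0$, but then $B$ meets every $B_n$, so by nestedness $B\subseteq B_n$ for all $n$ or $B_n\subseteq B$ for some $n$; the latter is impossible for large $n$, so $B\subseteq B_n$ for all $n$, and $B$ is a lower bound, but this does not yet give a contradiction --- so instead I argue via well-foundedness more carefully: a nested base ordered by reverse inclusion is well-founded because any element $B$ has a point $x\in B$, and the elements of $\cB$ below $x$ in $X$ that lie inside $B$ form a chain with no strictly decreasing $\omega$-sequence, since at each level a base element containing $x$ refines the previous one and the intersection of a strictly decreasing sequence of base neighbourhoods of $x$ cannot itself be a base element containing a different point. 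I would spell this out as: $\cB$ nested implies $(\cB,\supseteq)$ is a tree $T_\cB$ by the verbatim argument of Lemma~\ref{lem_treeassociated} (noetherianity being exactly what needs checking), and then apply Lemma~\ref{lem_assmap} to get an embedding $e\colon X\to\cP(T_\cB)$, where now $e^{-1}([t])=f(t)\in\cB$ shows $e$ is open onto its image using that $\cB$ is a \emph{base}, not just a subbase.

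The remaining task is to arrange $e(X)\subseteq\cB(T)$, i.e.\ that the image consists of maximal paths. Every path $e(x)$ is automatically maximal: if $e(x)\subsetneq b$ for a branch $b$ of $T_\cB$, pick $t\in b\setminus e(x)$; then $f(t)\in\cB$ is a base element with $x\notin f(t)$, yet $f(t)\supseteq f(t')$ for the $t'\in e(x)$ below $t$ --- wait, this needs the tree structure: since $e(x)$ is a path and $t$ lies above it, $f(t)$ is disjoint from $f(s)$ for every $s\in e(x)$ incomparable to $t$ and contained in $f(s)$ for $s\leq t$; so $x\in\bigcap_{s\leq t}f(s)$ but $x\notin f(t)$, meaning $t\notin e(x)$ only if $f(t)$ is a proper subset missing $x$, which is consistent --- so in fact paths need \emph{not} be maximal, and one must, as in Theorem~\ref{thm_charrayrepeat}, modify $T_\cB$. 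The correct move is: for each path $p$ of $T_\cB$ that is not of the form $e(x)$ and has no maximum, do nothing; for each node $t$ with $e^{-1}(\dc t)\neq\emptyset$ replace it by an $\omega$-chain via a lexicographic sum $\cL(P_t\colon t\in T_\cB)$ with $P_t=\NN$ exactly when $f(t)=e(x)$ for some $x$, making every such $e(x)$ into a maximal path (a branch, since everything above $t$ now sits above the inserted $\omega$-chain and there is nothing to extend to --- here the base property guarantees that if $e(x)$ has a maximum $t$ then $f(t)=\Set{x}$ is... no, not a singleton in general). Let me restate cleanly: after the lexicographic modification $T$, the image $e(X)$ consists of the rays that are tops-free, i.e.\ branches, precisely when $\cB$ is \emph{complete} --- completeness ensures, by the $\cR(T')$-argument in Theorem~\ref{thm_charrayrepeat} applied with $Y=X$ itself, that every path of $T_\cB$ without maximum already equals some $e(x)$, so after inserting $\omega$-chains no new rays appear and $e(X)=\cB(T)$. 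When $\cB$ is only a base (not complete), one gets a surjection onto $\cR(T)$-minus-something, i.e.\ an embedding into $\cB(T)$ (dropping surjectivity), and $\sigma$-disjointness of $\cB$ propagates to specialness of $T_\cB$ and then of $T$ exactly as in the previous proof.

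\textbf{Main obstacle.} The delicate point --- and the one I would spend the most care on --- is verifying that after the lexicographic-sum modification the image lands \emph{inside the branch space}, i.e.\ that the newly created rays which are not in $e(X)$ do not survive as non-maximal paths; equivalently, that $e(X)$ is exactly $\cB(T)$ under the completeness hypothesis. This is the branch-space analogue of the `$\cR(T')\subseteq\operatorname{im}(e')$' step, and requires the completeness of the base (restricted to the closed set $X$, which is all of $X$) to kill every path of $T_\cB$ lacking a maximum that is not already an image point. The rest --- noetherianity of nested bases, the tree construction, $\sigma$-disjointness $\leftrightarrow$ specialness, and the bracketed-conditions bookkeeping --- is routine given Lemmas~\ref{lem_treeassociated} and \ref{lem_assmap} and the proof of Theorem~\ref{thm_charrayrepeat}.
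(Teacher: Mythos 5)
Your forward direction and your identification of what the bracketed conditions do — completeness buys surjectivity (a branch $b$ yields the centered family $\set{f(t)}:{t\in b}$, any point of its intersection maps onto $b$), $\sigma$-disjointness buys specialness — match the paper exactly. The paper, however, does not rebuild the tree from scratch: it invokes Nyikos's theorem that a Hausdorff space embeds into a branch space if and only if it has a nested base, and in particular uses Nyikos's construction of a refined base $\cS'\subseteq\cS$ together with an $\cS'$-tree and an embedding $e\colon X\to\cB(T)$, then merely adds the two observations above. Your route of applying Lemma~\ref{lem_treeassociated} and Lemma~\ref{lem_assmap} directly to the given base has two genuine gaps.

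First, your claim that a nested base is automatically noetherian is false, and your own argument for it visibly stalls. Counterexample: let $X=\NN\cup\Set{\infty}$ be a convergent sequence, re-indexed as $\ZZ\cup\Set{-\infty}$ with $-\infty$ the limit of $-1,-2,\dots$; the sets $V_n=\set{m\in\ZZ}:{m\le n}\cup\Set{-\infty}$ ($n\in\ZZ$) together with all singletons $\Set{m}$ form a nested clopen base, yet $V_0\subsetneq V_1\subsetneq V_2\subsetneq\cdots$ is an infinite strictly increasing chain. So $(\cB,\supseteq)$ need not be well-founded and Lemma~\ref{lem_treeassociated} does not apply; one must first extract a suitable subcollection that still forms a base and does form a tree, which is precisely the content of Nyikos's Theorems~2.9--2.10 that the paper cites. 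Second, your treatment of maximality of the image paths is confused and ends in the wrong place: the lexicographic-sum modification is the device for \emph{ray} spaces and is not needed (nor correct) here. In fact, for a nested base $\cB\not\ni\emptyset$ of a Hausdorff space, $e(x)$ is \emph{automatically} a maximal path: if some $t'$ lay strictly above all of $e(x)$ with $x\notin f(t')$, pick $y\in f(t')\neq\emptyset$; since $\set{f(s)}:{s\in e(x)}$ is a neighbourhood base at $x$ (this is where ``base'' rather than ``subbase'' is used) and $X$ is Hausdorff, some $f(s)$ with $s\in e(x)$ omits $y$, contradicting $f(s)\supseteq f(t')\ni y$. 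Your proposal neither proves this nor correctly replaces it; as written, the key step ``the image lands inside the branch space'' is not established.
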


Note that the elements of a nested base are necessarily clopen. 

\begin{proof}
	The forwards implications are witnessed by the base $\cS=\set{[t]}:{t \in T}$ for the branch space $\cB(T)$. 
	
	The asserted characterisation of subspaces of branch spaces, i.e.\ that  Hausdorff space embeds into a branch spaces of a tree if and only if $X$ admits a nested base, is a classic result, with a modern proof provided by P. Nyikos in \cite[Theorem~2.10]{nyikos1999some}. This gives  Theorem~\ref{thm_charbranch} without any of the extra properties in brackets. Nyikos writes that ``\emph{there seems to be no convenient topological characterisation of the branch spaces of trees}'', \cite[p.8]{nyikos1999some} -- but all that is missing for such a  characterisation is a suitable completeness assumption in spirit of the Amsterdam properties. 
	
	Indeed, in \cite[Theorem~2.9 \& 2.10]{nyikos1999some} Nyikos constructs from a nested base $\cS \not\ni \emptyset$ a base $\cS' \subset \cS$ and $\cS'$-tree $T$ such that $t< t'$ implies $f(t) \supsetneq f(t')$ and the associated map $e \colon X \to \cB(T)$ defined by
$$e \colon X \to \cB(T), \; x\mapsto e(x):= \set{t \in T}:{x \in f(t)}$$ is an embedding. Now it suffices to observe that if $\cS$ is complete, then $e \colon X \to \cB(T)$ is onto: for any branch $b \in \cB(T)$, the collection $\set{f(t)}:{t \in b} \subseteq \cS$ is centered, and the element $x \in \bigcap_{t \in b} f(t)$ satisfies $e(x) = b$.

Finally, if $\cS$ is $\sigma$-disjoint, then from the fact that $t< t'$ implies $f(t) \supsetneq f(t')$, it readily follows that $T$ is special.
\end{proof}

\bibliographystyle{plain}
\bibliography{ref}
\end{document}